\documentclass[12pt,reqno]{amsart}

\usepackage[a4paper,margin=1in]{geometry}
\usepackage{amsmath,amssymb,amsthm,mathtools,bm}
\usepackage{microtype}
\usepackage{enumitem}
\usepackage{booktabs}
\usepackage[hidelinks]{hyperref}
\usepackage[nameinlink,noabbrev]{cleveref}
\usepackage{bbm}
\usepackage{amsmath}
\usepackage{pgfplots}
\usepgfplotslibrary{groupplots}
\pgfplotsset{compat=1.18}

\newtheorem{theorem}{Theorem}[section]
\newtheorem{proposition}[theorem]{Proposition}
\newtheorem{lemma}[theorem]{Lemma}

\newtheorem{corollary}[theorem]{Corollary}
\newtheorem{remark}[theorem]{Remark}
\newtheorem{definition}[theorem]{Definition}
\newtheorem*{ac}{Acknowledgement}

\newcommand{\bC}{\mathbb{C}}
\newcommand{\bN}{\mathbb{N}}
\newcommand{\bR}{\mathbb{R}}
\newcommand{\cL}{\mathcal{L}}
\newcommand{\cM}{\mathcal{M}}
\newcommand{\cH}{\mathcal{H}}
\newcommand{\cS}{\mathcal{S}}
\newcommand{\cG}{\mathcal{G}}

\newcommand{\cT}{\mathcal{T}}
\newcommand{\cV}{\mathcal{V}}
\newcommand{\bE}{\mathbb{E}}
\newcommand{\cI}{\mathcal{I}}
\newcommand{\Tr}{\mathrm{Tr}}
\newcommand{\Id}{\mathrm{Id}}
\newcommand{\K}{\mathbf{K}}
\newcommand{\bO}{\mathbf{O}}

\newcommand{\GNS}{\mathrm{GNS}}
\newcommand{\BKM}{\mathrm{BKM}}
\DeclareMathOperator{\spec}{spec}
\DeclareMathOperator{\spanop}{span}
\newcommand{\diag}{\mathrm{diag}}
\newcommand{\ip}[2]{\langle #1,#2\rangle}
\allowdisplaybreaks[2]
\renewcommand{\Re}{\operatorname{Re}}

\title[ENTROPIC INEQUALITIES FOR PRIMITIVE KMS SYMMETRIC QMS]{SOME ENTROPIC INEQUALITIES FOR PRIMITIVE KMS SYMMETRIC QUANTUM MARKOV SEMIGROUPS}

\author{Zhengwei Liu}
\address{Zhengwei Liu, Tsinghua University, Beijing}
\email{liuzhengwei@tsinghua.edu.cn }

\author{Jincheng Wan}
\address{Jincheng Wan, Tsinghua University, Beijing}
\email{wanjc23@mails.tsinghua.edu.cn }

\author{Jinsong Wu}
\address{Jinsong Wu, Beijing Institute of Mathematical Sciences and Applications, Beijing, 101408, China}
\email{wjs@bimsa.cn}

\date{}

\begin{document}

\maketitle

\begin{abstract}
In this note, we prove that every primitive KMS-symmetric quantum Markov
semigroup on a finite-dimensional matrix algebra satisfies a modified
logarithmic Sobolev inequality (MLSI).
We also construct a primitive quantum Markov semigroup without KMS symmetry
that fails MLSI, and a primitive KMS-symmetric quantum Markov semigroup
that fails complete MLSI (CMLSI).
The latter construction also provides a non-primitive KMS-symmetric
quantum Markov semigroup without MLSI.
For the graph-based KMS-symmetric quantum Markov semigroups studied here, we prove MLSI and CMLSI when the underlying graph is connected and has at least three vertices.
Finally, we establish CMLSI for a class of primitive bimodule KMS-symmetric quantum Markov semigroups arising from fermionic systems.
\end{abstract}

\section{Introduction}

Quantum Markov semigroups constitute a fundamental mathematical framework for the study of open quantum systems \cite{Dav74}.
Gorini, Kossakowski, Lindblad, and Sudarshan \cite{GKS76,Lin76} laid the foundations of the theory of quantum Markov semigroups and their generators.
A quantum Markov semigroup is said to be in equilibrium if it is invariant under a faithful normal state.
Frigerio and Verri \cite{Fri78,FriVer82} characterized the fixed-point space and the multiplicative domain in the equilibrium setting.
In the noncommutative framework, an equilibrium state gives rise to various inner products, including the GNS (Gelfand–Naimark–Segal), KMS (Kubo–Martin–Schwinger), and BKM (Bogoliubov–Kubo–Mori) inner products.
A quantum Markov semigroup that is self-adjoint with respect to the GNS (respectively, KMS) inner product is called GNS symmetric (respectively, KMS symmetric).
When the symmetry does not originate from the equilibrium state, the notions of bimodule GNS symmetry and bimodule KMS symmetry were introduced by Zhao, Jiang, Wan, and Wu \cite{WuZha25,JWW25,JWW26}.
In this paper, we focus primarily on KMS symmetric quantum Markov semigroups.
These semigroups have been studied intensively by Kossakowski, Frigerio, Gorini, and Verri \cite{KFGV77}, Fagnola and Umanità \cite{FagUma10}, Vernooij and Wirth \cite{VerWir23}, Wirth \cite{Wir26}, among others.

A central objective in the theory of KMS symmetric quantum Markov semigroups is to establish modified logarithmic Sobolev inequalities (MLSI) and complete modified logarithmic Sobolev inequalities (CMLSI) \cite{BGJ22,BGJ23}.
In 2017, Carlen and Maas obtained MLSI for certain GNS symmetric quantum Markov semigroups via the intertwining property \cite{CarMaa17}.
In 2022, Gao and Rouzé proved that every finite-dimensional GNS-symmetric quantum Markov semigroup admits a CMLSI.
In 2025, Gao, Junge, LaRacuente, and Li \cite{GJLL25} showed that, for GNS symmetric quantum Markov semigroups, the CMLSI constant is bounded by the inverse of the complete positivity mixing time.
Problem 6.3 in \cite{GJLL25} asks whether their entropy decay and entropy difference results extend to the KMS-symmetric setting.

In this paper, we prove that every primitive KMS-symmetric quantum Markov semigroup on a finite-dimensional matrix algebra admits an MLSI.
Nevertheless, we construct a counterexample of a primitive KMS symmetric quantum Markov semigroup that fails the CMLSI.
Consequently, not all KMS symmetric quantum Markov semigroups admit a CMLSI.
This partially answers the question of Gao, Junge, LaRacuente, and Li in \cite{GJLL25}.
We also study graph-based KMS-symmetric quantum Markov semigroups and a class
of primitive bimodule KMS-symmetric quantum Markov semigroups arising from
fermionic systems, and establish CMLSI under the hypotheses stated below.

The paper is organized as follows.
In Section 2, we shall recall some basic notations for KMS-symmetric quantum Markov semigroups including Fisher information and relative entropy.
In Section 3, we shall prove that any primitive KMS-symmetric quantum Markov semigroup has a modified logarithmic Sobolev inequality.
In Section 4, we present an example of primitive  quantum Markov semigroup without MLSI.
In Section 5, we present an example of primitive KMS-symmetric quantum Markov semigroup without CMLSI.
In Section 6, we obtain the CMLSI for graph-based KMS-symmetric quantum Markov semigroups.
In Section 7, we obtain the CMLSI for bimodule KMS-symmetric quantum Markov semigroups.

\begin{ac}
The paper is based on the numerous communications with ChatGPT Pro 5.6 and 6.
Zhengwei Liu was supported by Beijing Natural Science Foundation Key Program (Grant No. Z220002) and Beijing Natural Science Foundation (Grant No. Z221100002722017).
J. Wu was supported by grants from Beijing Institute of Mathematical Sciences and Applications.
J.~W. was supported by NSFC (Grant no. 12371124). 
\end{ac}

\section{Preliminaries}

In this section, we shall briefly recall the notations for KMS-symmetric quantum Markov semigroups on matrix algebras.

Suppose that $\{\Phi_t\}_{t\geq 0}$ is a quantum Markov semigroup on the matrix algebra $M_d(\bC)$ with unnormalized trace $\Tr$.
Let $\cL$ be the Lindbladian of $\{\Phi_t\}_{t\geq 0}$ such that $\Phi_t=e^{-t\cL}$.
The GKSL form of the generator is as follows:
\begin{align}\label{eq:gksl}
\cL(X)=i[H, X] +\sum_{j=1}^m \frac{1}{2}\{V_j^*V_j, X\}-\sum_{j=1}^m V_j^*XV_j,
\end{align}
where $H\in M_d(\bC)$ is hermitian and $V_j\in M_d(\bC)$.
A quantum Markov semigroup $\{\Phi_t\}_{t\geq 0}$ is called
primitive if it admits a faithful invariant state and
$\ker(\cL)=\bC I$. Equivalently, it has a unique invariant state
$\rho$, which is faithful and satisfies $\rho\Phi_t=\rho$
for all $t\geq 0$.

A quantum Markov semigroup $\{\Phi_t\}_{t\geq 0}$ is KMS-symmetric with respect to a faithful state $\rho$ if for all $t\geq 0$ and $X,Y\in M_d(\bC)$,
\begin{align}
    \Tr(X^* D_{\rho}^{1/2}\Phi_t(Y)D_\rho^{1/2})=\Tr(\Phi_t(X)^* D_{\rho}^{1/2}YD_\rho^{1/2}),
\end{align}
where $D_\rho$ is the density matrix of $\rho$, i.e. $\rho(\cdot)=\Tr(D_\rho \cdot)$.
This implies that $\{\Phi_t\}_{t\geq 0}$ is in equilibrium with respect to $\rho$.
For a quantum Markov semigroup admitting a faithful invariant state,
we denote by $\bE_\Phi$ its ergodic projection onto $\ker\cL$:
\[
\bE_\Phi(X)
=\lim_{T\to\infty}\frac1T\int_0^T\Phi_t(X)\,dt,
\qquad X\in M_d(\bC).
\]
In finite dimensions, this limit exists in norm.
If the semigroup is KMS-symmetric, then
\[
\bE_\Phi=\lim_{t\to\infty}\Phi_t.
\]

The Fisher information $\cI_{\cL}$ of the semigroup is
\begin{align*}
\cI_{\cL}(D)=\Tr(\cL^*(D)(\log D-\log \bE_{\Phi}^*(D))), 
\end{align*}
where $D$ is a faithful density matrix in $M_d(\bC)$ and $\cL^*$ is the adjoint of $\cL$ with respect to $\Tr$.
When $\{\Phi_t\}_{t\geq 0}$ is primitive, we have that $\cI_{\cL}(D)=\Tr(\cL^*(D)(\log D-\log D_\rho))$.
The relative entropy $H(D\| D_\rho)$ is $\Tr(D(\log D-\log D_\rho))$.

A quantum Markov semigroup $\{\Phi_t\}_{t\geq 0}$ admitting a faithful
invariant state, with ergodic projection $\bE_\Phi$, satisfies a modified
logarithmic Sobolev inequality if there exists $\beta>0$ such that,
for every density matrix $D$ and every $t\geq0$,
\begin{align}\label{eq:mlsi}
H(\Phi_t^*(D) \| \bE_{\Phi}^*(D)) \leq e^{-2\beta t} H(D \| \bE_{\Phi}^*(D)),
\end{align}
equivalently, for every faithful density matrix $D$,
\begin{align}\label{eq:mlsi2}
\cI_{\cL}(D) \geq 2\beta H(D \| \bE_{\Phi}^*(D)).
\end{align}
In the primitive case, inequalities \eqref{eq:mlsi} and \eqref{eq:mlsi2} reduce to
\[
H(\Phi_t^*(D)\|D_\rho)\leq e^{-2\beta t}H(D\|D_\rho),
\qquad
\cI_{\cL}(D)\geq 2\beta H(D\|D_\rho).
\]
We denote by $\alpha_{MLSI}$ the optimal constant
in inequalities \eqref{eq:mlsi} and \eqref{eq:mlsi2}.

In the following, we recall the complete modified logarithmic Sobolev inequalities for quantum Markov semigroups.
\begin{definition}[Complete Modified Logarithmic Sobolev Inequalities]
Let $\{\Phi_t\}_{t\geq0}$ be a quantum Markov semigroup on $M_d(\bC)$
admitting a faithful invariant state, and let $\bE_\Phi$ be its ergodic
projection. For $\beta\geq0$, the semigroup satisfies
$\operatorname{CMLSI}(\beta)$ if, for every $n\in\bN$ and every faithful
density matrix $D$ on $M_d\otimes M_n$,
\begin{equation}\label{eq:cmlsi}
\cI_{\cL\otimes\Id_n}(D)\geq 2\beta H(D\|(\bE_\Phi^* \otimes \Id_n )(D)).
\end{equation}
If the semigroup is primitive with invariant density matrix $D_\rho$,
then $\bE_\Phi^*(X)=\Tr(X)D_\rho$. Consequently,
\[
(\bE_\Phi^*\otimes\Id_n)(D)=D_\rho\otimes D_R,
\qquad D_R=\operatorname{Tr}_{M_d}(D).
\]
Complete entropic inequalities in this form are developed in \cite{GaoRou22}.
\end{definition}
We denote by $\alpha_{\mathrm{CMLSI}}(\cL)$ the supremum of all
$\beta\geq0$ for which \eqref{eq:cmlsi} holds for every $n\in\bN$
and every faithful density matrix $D\in M_d\otimes M_n$.
When the generator is clear from the context, we omit the argument $\cL$.

The logarithmic-mean operator $\K_{D}$ on $M_d(\bC)$ is defined as follows:
\begin{align*}
\K_D(X)=\int_0^1 D^s X D^{1-s} ds, \quad X\in M_d(\bC).
\end{align*}
For any $X\in M_d(\bC)$ and $s\in [0,1]$, the BKM-norm of $X$ with respect to a faithful state $\rho$ is defined to be
\begin{align*}
\|X\|_{\BKM,\rho}=\left(\int_0^1 \Tr(D_\rho^{s} X^* D_\rho^{1-s} X) ds\right)^{1/2}.
\end{align*}
The 2-norm of $X$ with respect to a faithful state $\rho$ at $s$ is defined to be
\begin{align*}
  \|X\|_{s, \rho}=\left( \Tr(D_\rho^{s} X^* D_\rho^{1-s} X) \right)^{1/2}.
\end{align*}
For $s=1$, this is the GNS norm
\[
\|X\|_{\GNS,\rho}
=\bigl(\Tr(D_\rho X^*X)\bigr)^{1/2}.
\]
For $s=0$, it equals $\|X^*\|_{\GNS,\rho}$,
whereas for $s=1/2$ it is the KMS norm.
The BKM-inner product $\langle \cdot, \cdot \rangle_{\BKM, \rho}$ is defined to be
\begin{align*}
    \langle X, Y\rangle_{\BKM, \rho} =\int_0^1 \Tr(D_\rho^{s} X^* D_\rho^{1-s} Y)ds, \quad X, Y \in M_d(\bC).
\end{align*}
For an operator $\cL$ on $M_d(\bC)$, the norm $\|\cL\|_{\BKM, \rho}$ is defined to be $\displaystyle \sup_{X\neq 0} \frac{\|\cL(X)\|_{\BKM, \rho}}{\|X\|_{\BKM, \rho}}$.
In what follows, $\|\cdot\|_\rho$ and $\langle\cdot,\cdot\rangle_\rho$
denote the BKM norm and BKM inner product with respect to $\rho$, respectively.

\begin{lemma}\label{lem:tech1}
Suppose that $\{\Phi_t\}_{t\geq 0}$ is a quantum Markov semigroup in equilibrium with respect to a faithful state $\rho$.
Then $\|\Phi_t(X)\|_{s, \rho} \leq \|X\|_{s, \rho}$ and $\Re \Tr(D_\rho^s X^* D_\rho^{1-s}\cL(X)) \geq 0$ for all $s\in [0,1]$.
\end{lemma}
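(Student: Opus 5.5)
The plan is to prove the contraction $\|\Phi_t(X)\|_{s,\rho}\le\|X\|_{s,\rho}$ for every $t\ge 0$ and $s\in[0,1]$ first, and then obtain the statement about $\cL$ by differentiating at $t=0$. Since $\Phi_0=\Id$, the function $f(t)=\|\Phi_t(X)\|_{s,\rho}^2$ satisfies $f(t)\le f(0)$ for all $t\ge0$. Hence $f'(0)\le 0$. Because $\frac{d}{dt}\Phi_t(X)|_{t=0}=-\cL(X)$, we get
\[
f'(0)=-2\Re\Tr\bigl(D_\rho^{s}X^*D_\rho^{1-s}\cL(X)\bigr),
\]
which gives the second claim. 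The main work is therefore the contraction.

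The first step is to write the norm in a form where the ingredients are visible. We have $\|X\|_{s,\rho}^2=\Tr\bigl((D_\rho^{(1-s)/2}XD_\rho^{s/2})^*(D_\rho^{(1-s)/2}XD_\rho^{s/2})\bigr)$, which is the squared Hilbert--Schmidt norm of $D_\rho^{(1-s)/2}XD_\rho^{s/2}$. The endpoint cases are the standard ones.

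\textbf{$s=1$ (GNS norm).} Each $\Phi_t$ is unital and completely positive, so the Kadison--Schwarz inequality gives $\Phi_t(X)^*\Phi_t(X)\le\Phi_t(X^*X)$. Combining this with invariance $\Tr(D_\rho\Phi_t(Y))=\Tr(D_\rho Y)$ yields
\[
\|\Phi_t(X)\|_{1,\rho}^2\le\Tr(D_\rho\Phi_t(X^*X))=\|X\|_{1,\rho}^2 .
\]

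\textbf{$s=0$.} The same argument applies to $X^*$, using $\Phi_t(X)^*=\Phi_t(X^*)$ since $\Phi_t$ is positive.

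\textbf{$0<s<1$.} Here I would use complex interpolation. Let $S$ be the strip $0\le\Re z\le1$, and consider the weighted spaces with norms $\|X\|_{z}:=\|D_\rho^{(1-z)/2}XD_\rho^{z/2}\|_{2}$ for $z\in S$, where $\|\cdot\|_2$ is the Hilbert--Schmidt norm. For $\Re z=\theta$ this norm equals $\|X\|_{\theta,\rho}$, since the unitary factors $D_\rho^{i\tau}$ do not change the Hilbert--Schmidt norm. The family is then the complex interpolation scale between the $s=0$ and $s=1$ weighted $L^2$ spaces. This is a Stein--Weiss type interpolation of weighted Hilbert spaces, and it can also be proved directly with Hadamard's three-lines lemma.

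To carry out the three-lines argument, fix $X,Y$ and set
\[
F(z)=\Tr\bigl(Y(z)^*\,D_\rho^{(1-z)/2}\Phi_t(X(z))D_\rho^{z/2}\bigr),
\]
where $X(z)=D_\rho^{-(1-z)/2}A D_\rho^{-z/2}$ and $A,Y(z)$ are chosen analytically so that the boundary values are controlled by the endpoint contractions. Then $F$ is bounded on the boundary lines $\Re z=0$ and $\Re z=1$, and the three-lines lemma gives the bound on $\Re z=s$. Every function involved is entire and bounded on the strip because we are in finite dimensions.

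The step I expect to be the main obstacle is checking that the norm identities on the boundary lines hold with the imaginary parts present. On $\Re z=1$ the quantity $\|D_\rho^{-i\tau/2}\Phi_t(\cdot)D_\rho^{(1+i\tau)/2}\|_2$ has to reduce to the GNS contraction. This works because left multiplication by the unitary $D_\rho^{-i\tau/2}$ preserves $\|\cdot\|_2$, while right multiplication by $D_\rho^{i\tau/2}$ together with $D_\rho^{1/2}$ gives exactly the GNS weight. If this bookkeeping becomes messy, a fallback is available for the case where $\Phi_t$ is KMS symmetric, as assumed later in the paper: there one can argue spectrally. That shortcut does not cover the general equilibrium case, so I would keep the interpolation route as the main argument.
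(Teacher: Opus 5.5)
Your proposal is correct and follows the same route as the paper: Kadison--Schwarz plus invariance of $\rho$ at the endpoints $s=0,1$, interpolation to $s\in(0,1)$, and differentiation of $\|\Phi_t(X)\|_{s,\rho}^2$ at $t=0$. The only difference is that the paper cites Kosaki \cite{Kos84} for the interpolation step, whereas you carry it out with the three-lines lemma. That argument already works with the constant choices $A=D_\rho^{(1-s)/2}XD_\rho^{s/2}$ and $Y=B/\|B\|_2$, where $B=D_\rho^{(1-s)/2}\Phi_t(X)D_\rho^{s/2}$.
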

\begin{proof}
By the Kadison-Schwarz inequality, we have that 
\begin{align*}
\Tr(D_\rho \Phi_t(X)^*\Phi_t(X)) \leq \Tr(D_\rho \Phi_t(X^*X)), \quad \Tr(D_\rho \Phi_t(X)\Phi_t(X)^*) \leq \Tr(D_\rho \Phi_t(XX^*)),
\end{align*}
i.e. $\|\Phi_t(X)\|_{1, \rho}\leq \|X\|_{1, \rho}$ and  $\|\Phi_t(X)\|_{0, \rho}\leq \|X\|_{0, \rho}$.
By \cite{Kos84}, we have that $\|\Phi_t(X)\|_{s, \rho} \leq \|X\|_{s, \rho}$ for $s\in [0,1]$.
Differentiating at $t=0$ gives
\[
0\geq
\left.\frac{d}{dt}\|\Phi_t(X)\|_{s,\rho}^2\right|_{t=0}
=-2\Re\Tr\!\left(D_\rho^sX^*D_\rho^{1-s}\cL(X)\right),
\]
which proves the second assertion.
\end{proof}

\begin{lemma}\label{lem:tech2}
For sufficiently small $\varepsilon>0$, let $D_{\varepsilon}=D_\rho + \varepsilon \K_{D_\rho} (X)$, where $X=X^*$, $\|X\|\leq 1$ and $\rho(X)=0$.
We have that 
\begin{align}
H(D_{\varepsilon} \| D_\rho)=& \frac{\varepsilon^2}{2} \int_0^1 \Tr(D_\rho^s X D_\rho^{1-s} X) ds +\bO(\varepsilon^3), \label{eq:entropyest}\\
\cI_{\cL}(D_\varepsilon) = & \varepsilon^2 \int_0^1 \Re \Tr(D_\rho^s XD_\rho^{1-s}\cL(X)) ds +\bO(\varepsilon^3). \label{eq:informationest} 
\end{align}
\end{lemma}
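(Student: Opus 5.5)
The plan is to expand $\log D_\varepsilon$ to first order in $\varepsilon$ using the Fréchet derivative of the matrix logarithm. The key identity is that the derivative of $\log$ at a faithful $D$ in direction $Y$ is $\K_D^{-1}(Y)$. This follows from $\log A=\int_0^\infty\bigl((1+u)^{-1}-(A+u)^{-1}\bigr)du$, whose derivative is $\int_0^\infty (D+u)^{-1}Y(D+u)^{-1}du$; in the eigenbasis of $D$ this has entries $\frac{\log\lambda_i-\log\lambda_j}{\lambda_i-\lambda_j}Y_{ij}$, the inverse of the symbol $\frac{\lambda_i-\lambda_j}{\log\lambda_i-\log\lambda_j}$ of $\K_D$. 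Since $D_\rho$ is faithful and $\|X\|\le1$, $D_\varepsilon$ is faithful for small $\varepsilon$, and $\log$ is smooth (real-analytic) near $D_\rho$. Hence
\[
\log D_\varepsilon-\log D_\rho=\varepsilon\,\K_{D_\rho}^{-1}(\K_{D_\rho}(X))+\bO(\varepsilon^2)=\varepsilon X+\bO(\varepsilon^2).
\]
Note also that $\Tr D_\varepsilon=1+\varepsilon\Tr(\K_{D_\rho}(X))=1+\varepsilon\rho(X)=1$ by cyclicity, and $D_\varepsilon$ is selfadjoint, so it is a density matrix.

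\emph{Relative entropy.} Set $f(\varepsilon)=H(D_\varepsilon\|D_\rho)$, which is smooth near $0$. We have $f(0)=0$, and
\[
f'(\varepsilon)=\Tr\bigl(\dot D_\varepsilon(\log D_\varepsilon-\log D_\rho)\bigr)+\Tr\bigl(D_\varepsilon\,\mathrm{Dlog}_{D_\varepsilon}(\dot D_\varepsilon)\bigr).
\]
The second term equals $\Tr(\dot D_\varepsilon)$, because $\Tr(D\,\K_D^{-1}(Y))=\Tr(Y)$; this holds since $\K_D^{-1}$ acts diagonally in the eigenbasis with diagonal symbol $1/\lambda_i$. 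As $\Tr(\dot D_\varepsilon)=\Tr(\K_{D_\rho}(X))=0$, we get $f'(0)=0$. Using the first-order expansion above,
\[
f'(\varepsilon)=\Tr\bigl(\K_{D_\rho}(X)\,\varepsilon X\bigr)+\bO(\varepsilon^2).
\]
Integrating gives $f(\varepsilon)=\frac{\varepsilon^2}{2}\Tr(\K_{D_\rho}(X)X)+\bO(\varepsilon^3)$, and this is \eqref{eq:entropyest}. Alternatively one can Taylor expand directly; either way, smoothness gives a uniform $\bO(\varepsilon^3)$ remainder.

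\emph{Fisher information.} Here $\cL^*(D_\rho)=0$ because $\rho$ is invariant. Therefore
\[
\cL^*(D_\varepsilon)=\varepsilon\,\cL^*(\K_{D_\rho}(X)),
\]
and so
\[
\cI_\cL(D_\varepsilon)=\varepsilon\Tr\bigl(\cL^*(\K_{D_\rho}(X))(\varepsilon X+\bO(\varepsilon^2))\bigr)=\varepsilon^2\Tr\bigl(\K_{D_\rho}(X)\cL(X)\bigr)+\bO(\varepsilon^3).
\]
Expanding $\K$ gives $\Tr(\K_{D_\rho}(X)\cL(X))=\int_0^1\Tr(D_\rho^sXD_\rho^{1-s}\cL(X))\,ds$.

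It remains to identify this with its real part. Since $\cI_\cL(D_\varepsilon)$ is real ($\cL^*$ preserves selfadjointness and the logarithms are selfadjoint), the $\varepsilon^2$ coefficient must be real, so it equals its real part. This gives \eqref{eq:informationest}. The real part is also natural in view of Lemma~\ref{lem:tech1}, which shows it is nonnegative.

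The main obstacle is justifying the derivative of $\log$ as $\K_D^{-1}$ together with the identity $\Tr(D\,\K_D^{-1}(Y))=\Tr Y$; both are handled by the integral/eigenbasis representation above. Everything else is a routine Taylor expansion with uniform remainders, using the analyticity of $\log$ on positive definite matrices.
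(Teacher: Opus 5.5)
Your proposal is correct and follows essentially the same route as the paper: you expand $\log D_\varepsilon-\log D_\rho=\varepsilon X+\bO(\varepsilon^2)$ using the Fr\'echet derivative $\K_{D_\rho}^{-1}$ (which the paper cites from Hiai--Petz), use $\Tr(\K_{D_\rho}(X))=\rho(X)=0$ to get $f'(0)=0$ and read off the quadratic term, and use $\cL^*(D_\rho)=0$ for the Fisher information. Your observation that the $\varepsilon^2$ coefficient is automatically real, so the $\Re$ in the statement is harmless, is a small point the paper's proof leaves implicit.
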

\begin{proof}
Let $f(\varepsilon)=H(D_{\varepsilon} \| D_\rho)$.
We have that 
\begin{align*}
f(\varepsilon) =& \Tr(D_\varepsilon (\log D_{\varepsilon}-\log D_\rho)) \\
=& \Tr(D_\rho (\log D_{\varepsilon}-\log D_\rho)) +\varepsilon \Tr(\K_{D_\rho} (X) (\log D_{\varepsilon}-\log D_\rho)).
\end{align*}
By Example 3.22 in \cite{HiaPet14}, we have that 
\begin{align*}
\log D_{\varepsilon}-\log D_\rho= \varepsilon \K_{D_\rho}^{-1} \K_{D_\rho} (X) +\bO(\varepsilon^2) =  \varepsilon  X+ \bO(\varepsilon^2).
\end{align*}
By taking the differentiation of $f(\varepsilon)$ with respect to $\varepsilon$ and $\Tr(D_\rho X)=0$, we obtain that 
\begin{align*}
f'(\varepsilon)=& \Tr(D_\rho X)+ \Tr(\K_{D_\rho} (X) (\log D_{\varepsilon}-\log D_\rho)) \\
=&  \Tr(\K_{D_\rho} (X) (\log D_{\varepsilon}-\log D_\rho)).
\end{align*}
Hence $f'(0)=0$.
By taking the  differentiation of $f'(\varepsilon)$ with respect to $\varepsilon$, we have that 
\begin{align*}
f''(0) =& \Tr(\K_{D_\rho} (X) \K_{D_\rho}^{-1}(\K_{D_\rho}(X)))= \Tr(\K_{D_\rho} (X) X) \\
=&\int_0^1 \Tr(D_\rho^s X D_\rho^{1-s}X) ds.
\end{align*}
By taking the Taylor expansion with respect to $\varepsilon$ and $f(0)=0$, we obtain that 
\begin{align*}
H(D_{\varepsilon} \| D_\rho) = \frac{\varepsilon^2}{2}\int_0^1 \Tr(D_\rho^s X D_\rho^{1-s}X) \; ds+\bO(\varepsilon^3).
\end{align*}
By the fact that $\cL^*(D_\rho)=0$, we have that 
\begin{align*}
\cI_{\cL} (D_{\varepsilon}) =&\Tr(\cL^*(D_{\varepsilon})(\log D_\varepsilon -\log D_\rho) ) \\
=& \varepsilon \Tr(\cL^*(\K_{D_\rho}(X))(\log D_\varepsilon -\log D_\rho) )\\
=& \varepsilon^2 \Tr(\cL^*(\K_{D_\rho}(X))X) +\bO(\varepsilon^3)\\
=&  \varepsilon^2 \int_0^1 \Tr( D_\rho^s X D_\rho^{1-s}\cL(X)) ds+\bO(\varepsilon^3).
\end{align*}
This completes the proof of the lemma.
\end{proof}

\section{MLSI for Primitive KMS Symmetric QMS}
In this section, we shall obtain the modified logarithmic Sobolev inequality for primitive KMS symmetric quantum Markov semigroups and analyze their structures.
\begin{theorem}\label{thm:mlsiprim}
Suppose that $\{\Phi_t\}_{t \geq 0}$ is a primitive KMS-symmetric quantum Markov semigroup on $M_d(\bC)$ in equilibrium with respect to a faithful state $\rho$.
Then there exists $\beta>0$ such that the modified log-Sobolev inequality \eqref{eq:mlsi} holds.
\end{theorem}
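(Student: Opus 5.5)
We prove the equivalent entropy-production form \eqref{eq:mlsi2}: we show that
\[
\beta_0:=\inf\Bigl\{\frac{\cI_{\cL}(D)}{2H(D\|D_\rho)} : D \text{ faithful density},\ D\neq D_\rho\Bigr\}
\]
is strictly positive. The argument is by compactness, splitting the state space into a neighbourhood of $D_\rho$ and its complement.

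\textbf{Step 1: away from $D_\rho$.} On the set $\{D: \|D-D_\rho\|\ge\delta\}$ the relative entropy is bounded above, so it suffices to bound $\cI_\cL(D)$ below by a positive constant there.

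For faithful $D$ the quantity $\cI_\cL(D)=-\frac{d}{dt}H(\Phi_t^*(D)\|D_\rho)|_{t=0}$ is nonnegative, by the data processing inequality. It is continuous on faithful states.

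Suppose $\cI_\cL(D)=0$ for some $D\neq D_\rho$. Since $H(\Phi_t^*D\|D_\rho)$ is nonincreasing, primitivity gives $H(\Phi_t^*D\|D_\rho)\to 0$, so the entropy must strictly decrease at some time. To turn this into a contradiction I would use the standard fact that $\cI_\cL(D)=0$ forces $\cL^*(D)=0$ (equality in the entropy-production inequality). Then $D$ is invariant, and uniqueness of the invariant state gives $D=D_\rho$.

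Near the boundary of the state cone (non-faithful $D$), $\cI_\cL$ may blow up or be defined only as a limit. Following the definition \eqref{eq:mlsi}, I would instead work with the semigroup form. Fix $t_0>0$. By primitivity, $\Phi_{t_0}^*$ maps all states into a compact subset of faithful states, and $H(\Phi_{t_0}^*D\|D_\rho)\le c\,H(D\|D_\rho)$ for some $c<1$ uniformly on $\{\|D-D_\rho\|\ge\delta\}$. This follows from compactness together with strict monotonicity of entropy along the primitive flow.

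\textbf{Step 2: near $D_\rho$.} This is the main step, and the place where KMS symmetry enters. Write $D=D_\rho+\varepsilon\K_{D_\rho}(X)$ with $X=X^*$ and $\rho(X)=0$; any state near $D_\rho$ has this form since $\K_{D_\rho}$ is invertible. By Lemma~\ref{lem:tech2}, it suffices to show the quadratic-form inequality
\[
\int_0^1 \Re\Tr(D_\rho^sXD_\rho^{1-s}\cL(X))\,ds \ \ge\ \lambda \int_0^1\Tr(D_\rho^sXD_\rho^{1-s}X)\,ds
\]
for some $\lambda>0$ and all such $X$. I would also make the $\bO(\varepsilon^3)$ remainders uniform over $\|X\|\le 1$ by a smooth Taylor bound.

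This is a BKM spectral gap. KMS symmetry gives self-adjointness of $\cL$ only in the $s=1/2$ inner product, not in the BKM inner product. The plan is therefore:

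1. Each integrand is nonnegative by Lemma~\ref{lem:tech1}, so the left side is a nonnegative quadratic form $q(X)$ on the finite-dimensional real space $V=\{X=X^*:\rho(X)=0\}$.
2. By compactness of the BKM unit sphere in $V$, it suffices to show that $q(X)=0$ implies $X=0$.
3. If $q(X)=0$, then each $s$-term vanishes for almost every $s$, hence by continuity at $s=1/2$: $\Re\langle X,\cL X\rangle_{\mathrm{KMS}}=0$.
4. Since $\cL$ is KMS self-adjoint and KMS-positive (Lemma~\ref{lem:tech1} at $s=1/2$), we have $\langle X,\cL X\rangle_{\mathrm{KMS}}=\|\cL^{1/2}X\|_{\mathrm{KMS}}^2=0$. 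Hence $\cL X=0$, so $X\in\ker\cL=\bC I$ by primitivity.
5. Combined with $\rho(X)=0$, this gives $X=0$.

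\textbf{Step 3: assembly.} Steps 1 and 2 give $\cI_\cL(D)\ge 2\beta H(D\|D_\rho)$ on a neighbourhood $\|D-D_\rho\|<\delta$ and a uniform contraction factor outside it. Running the flow, $H(\Phi_t^*D\|D_\rho)$ decays exponentially with rate $\min(\beta,\ -\log c/(2t_0))$, using the semigroup property to glue the two regimes. This yields \eqref{eq:mlsi}.

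The main obstacle is Step 2: Lemma~\ref{lem:tech2} gives error terms only for fixed $X$, and they must be made uniform in $X$. The argument through $s=1/2$ handles nondegeneracy without needing BKM symmetry.
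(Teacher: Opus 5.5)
Your Step~2 is sound. It is essentially the paper's argument that $\inf_{\cS_\rho}Q>0$: each $s$-term is nonnegative by Lemma~\ref{lem:tech1}, the $s=1/2$ term is strictly positive by KMS symmetry and primitivity, and compactness finishes. Replacing the paper's explicit logarithmic-mean estimates by a uniform Taylor remainder is a legitimate substitute.

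The gap is the far-from-equilibrium regime, Steps~1 and~3.

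First, the ``standard fact'' that $\cI_\cL(D)=0$ forces $\cL^*(D)=0$ is false for primitive semigroups. In the paper's Section~4 example, $D_\varepsilon=\frac12(I+\varepsilon W)$ has $\cI_\cL(D_\varepsilon)=0$ while $\cL^*(D_\varepsilon)=\omega\varepsilon Q\neq0$, for every $\varepsilon\in(0,1)$. Your Step~1 uses no KMS symmetry, so it cannot rest on such an equality case. With KMS symmetry the implication does hold, but proving it needs essentially the global argument you are missing.

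Second, the semigroup-form fallback does not rescue the argument. A uniform contraction $H(\Phi_{t_0}^*D\|D_\rho)\le c\,H(D\|D_\rho)$ away from $D_\rho$ can indeed be shown for a primitive semigroup, once strict monotonicity of the entropy is actually proved rather than asserted. But gluing it to the local bound only gives $H(\Phi_t^*D\|D_\rho)\le c^{-1}e^{-2\alpha t}H(D\|D_\rho)$. For $0<t<t_0$ and $D$ far from $D_\rho$ you know nothing beyond monotonicity.

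Inequality \eqref{eq:mlsi} with prefactor $1$ for all $t\ge0$ is equivalent to the pointwise bound \eqref{eq:mlsi2} at every faithful $D$. A contraction at a fixed positive time says nothing about $\cI_\cL(D)$ itself. The Section~4 semigroup shows this: its Bloch radius $|\vec r|$ is nonincreasing and strictly decreases over every positive time interval. It therefore has exactly such a contraction outside any neighbourhood of $I/2$, yet $\cI_\cL$ vanishes at the far points $D_\varepsilon$. Separately, an operator-norm ball around $D_\rho$ need not be forward invariant, whereas an entropy sublevel set is.

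What is missing is a mechanism that transfers the local inequality pointwise to every state. The paper uses convexity of $D\mapsto\cI_\cL(D)$ (Spohn; it follows from Lieb's concavity theorem) together with $\cI_\cL(D_\rho)=0$:
\begin{itemize}
\item For $D_{t_0}=(1-t_0)D_\rho+t_0D$, convexity gives $\cI_\cL(D)\ge t_0^{-1}\cI_\cL(D_{t_0})$.
\item Since $D\le\kappa D_\rho$ with $\kappa=\|D_\rho^{-1}\|$, one fixed small $t_0$, independent of $D$, places $D_{t_0}$ in the neighbourhood where your Step~2 bound holds.
\item A lower bound $H(D_{t_0}\|D_\rho)\ge c\,t_0^2H(D\|D_\rho)$ with $c>0$ independent of $D$ (the paper cites Lemma~2.2 of Gao--Rouz\'e) then gives $\cI_\cL(D)\ge 2\beta c\,t_0\,H(D\|D_\rho)$ for all faithful $D$.
\end{itemize}
There are no boundary issues, since $D_{t_0}\ge(1-t_0)D_\rho$ is uniformly faithful. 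Replacing your Steps~1 and~3 by this argument completes the proof.
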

\begin{proof}
The case $d=1$ is trivial, so we assume $d\geq2$.
We first prove the Fisher information inequality for faithful density matrices.
Let 
\begin{align}\label{eq:keybd}
Q(X)=\frac{\displaystyle \int_0^1 \Re \Tr(D_\rho^s X D_\rho^{1-s}\cL(X))ds}{\displaystyle \int_0^1 \Tr(D_\rho^s X D_\rho^{1-s}X) ds}, 
\end{align}
where 
\begin{align*}
X\in  \cS_\rho=\{X=X^*\in M_d(\bC): \Tr(D_\rho X)=0, \|X\|=1\}.
\end{align*}
Note that $\displaystyle \int_0^1 \Tr(D_\rho^s X D_\rho^{1-s}X) ds>0$ for $X\neq 0$.
This indicates that $Q(X)$ is well-defined.
By Lemma \ref{lem:tech1}, for all $s\in [0,1]$ we have
$\Re \Tr(D_\rho^s X D_\rho^{1-s}\cL(X))\geq 0$.
By KMS symmetry and primitivity, we have
\[
\Re\Tr\!\left(D_\rho^{1/2}XD_\rho^{1/2}\cL(X)\right)>0,
\qquad X\in\cS_\rho.
\]
Hence $Q(X)>0$.
Now by the fact that $\cS_\rho$ is compact in $M_d(\bC)$, we see that $\displaystyle \inf_{X\in \cS_\rho}Q(X)>0$.

Suppose that $D\leq \kappa D_\rho$ for some $\kappa\geq 2$ for every density matrix $D$ in $M_d(\bC)$.
In general, one could take $\kappa$ to be the inverse of the minimal eigenvalue of $D_\rho$.
Let $q=\displaystyle \inf_{X\in \cS_\rho}Q(X)$ and $p=\|\cL\|_{\BKM, \rho}$, $\displaystyle \varepsilon=\frac{q}{3p}\leq \frac{1}{3}$.
Suppose that 
\begin{align*}
    - \varepsilon D_\rho \leq D-D_\rho \leq \varepsilon D_\rho.
\end{align*}
Hence for any $0\leq t\leq 1$, by the joint monotonicity of the logarithmic Kubo-Ando mean, we have that 
\begin{align*}
\frac{1}{1+t\varepsilon}\K_{D_\rho}^{-1} \leq \K_{(1-t)D_\rho +tD}^{-1}\leq \frac{1}{1-t\varepsilon}\K_{D_\rho}^{-1}.
\end{align*}
By noting that 
\begin{align*}
    \log D-\log D_\rho =\int_0^1 \K_{(1-t)D_\rho +tD}^{-1} (D-D_\rho) dt,
\end{align*}
we obtain that 
\begin{align*}
  &  \left\|\log D-\log D_\rho -\K_{D_\rho}^{-1}(D-D_\rho)\right\|_\rho  \\
   \leq & \int_0^1 \|(\K_{(1-t)D_\rho +tD}^{-1}-\K_{D_\rho}^{-1}) (D-D_\rho)\|_\rho dt\\
   =& \int_0^1 \| (\K_{D_\rho}^{1/2}\K_{(1-t)D_\rho +tD}^{-1} \K_{D_\rho}^{1/2}-\Id)\K_{D_\rho}^{-1/2} (D-D_\rho)\|_2 dt \\
   \leq & \int_0^1 \frac{t\varepsilon}{1-t\varepsilon} dt\|\K_{D_\rho}^{-1}(D-D_\rho)\|_\rho\\
    \leq & \frac{\varepsilon}{2(1-\varepsilon)}\|\K_{D_\rho}^{-1}(D-D_\rho)\|_\rho \\
    \leq&  \frac{q}{4p}\|\K_{D_\rho}^{-1}(D-D_\rho)\|_\rho,
\end{align*}
i.e.
\begin{align}\label{eq:normest1}
\left\|\log D-\log D_\rho -\K_{D_\rho}^{-1}(D-D_\rho)\right\|_\rho \leq  \frac{q}{4p}\|\K_{D_\rho}^{-1}(D-D_\rho)\|_\rho.
\end{align}
Then by Equation \eqref{eq:normest1}, we have that 
\begin{equation}\label{eq:inforest}
\begin{aligned}
    \cI_{\cL}(D) =& \langle \K_{D_\rho}^{-1}(D-D_\rho), \cL(\log D-\log D_\rho)\rangle_\rho \\
    =& \langle \K_{D_\rho}^{-1}(D-D_\rho), \cL(\K_{D_\rho}^{-1}(D-D_\rho))\rangle_\rho  \\
    & +\langle \K_{D_\rho}^{-1}(D-D_\rho), \cL(\log D-\log D_\rho-\K_{D_\rho}^{-1}(D-D_\rho))\rangle_\rho\\
    \geq & q\|\K_{D_\rho}^{-1}(D-D_\rho)\|_\rho^2 - \frac{p\varepsilon }{2(1-\varepsilon)} \|\K_{D_\rho}^{-1}(D-D_\rho)\|_\rho^2\\
    \geq &  \frac{3q}{4}\|\K_{D_\rho}^{-1}(D-D_\rho)\|_\rho^2.
\end{aligned}
\end{equation}

On the other hand, the integral form of Taylor's theorem gives
\begin{equation}\label{eq:entropy-local-bound}
\begin{aligned}
H(D\|D_\rho)
&=
\int_0^1 (1-t)\,
\operatorname{Tr}\!\left(
(D-D_\rho)
K_{(1-t)D_\rho+tD}^{-1}(D-D_\rho)
\right)\,dt
\\
&\le
\left(\int_0^1 \frac{1-t}{1-t\varepsilon}\,dt\right)
\operatorname{Tr}\!\left(
(D-D_\rho)K_{D_\rho}^{-1}(D-D_\rho)
\right)
\\
&\le
\frac{1}{2(1-\varepsilon)}
\left\|K_{D_\rho}^{-1}(D-D_\rho)\right\|_\rho^2
\\
&\le
\frac34
\left\|K_{D_\rho}^{-1}(D-D_\rho)\right\|_\rho^2.
\end{aligned}
\end{equation}
Equations \eqref{eq:inforest} and \eqref{eq:entropy-local-bound} indicate that 
\begin{align}\label{eq:mslest1}
    \cI_{\cL}(D)\geq q H(D\|D_\rho)
\end{align}
for $-\varepsilon D_\rho\leq D-D_\rho \leq \varepsilon D_\rho$.


Let $\displaystyle t_0=\frac{\varepsilon}{\kappa-1}$ and $D_{\rho_0}=(1-t_0)D_\rho +t_0 D$.
Then $-\varepsilon D_\rho \leq D_{\rho_0}-D_\rho \leq \varepsilon D_\rho$.
By the convexity of the Fisher information \cite{Spo78}, we have that 
\begin{align*}
\cI_{\cL}(D_{\rho_0}) \leq (1-t_0) \cI_{\cL}(D_\rho) +t_0 \cI_{\cL}(D).
\end{align*}
By Inequality \eqref{eq:mslest1}, we have that 
\begin{align*}
\cI_{\cL}(D) \geq t_0^{-1} \cI_{\cL}(D_{\rho_0}) \geq q t_0^{-1} H(D_{\rho_0} \| D_\rho).
\end{align*}
By Lemma 2.2 in \cite{GaoRou22}, we have that
\begin{align*}
    H(D_{\rho_0}\|D_\rho) \geq \frac{(1+\varepsilon)\log (1+\varepsilon)-\varepsilon}{\varepsilon^2} t_0^2 H(D\|D_\rho)
\end{align*}
and 
\begin{align*}
\frac{(1+\varepsilon)\log (1+\varepsilon)-\varepsilon}{\varepsilon^2} \geq \frac{1}{2+\varepsilon}\geq \frac{3}{7}.
\end{align*}
We have that 
\begin{align}\label{eq:mlsiest0}
\cI_{\cL}(D) \geq   \frac{\left(\inf_{X\in \cS_\rho} Q(X)\right)^2}{7\|\cL\|_{\BKM, \rho}(\kappa-1)}  H(D \| D_\rho).
\end{align}
In this case, we have that 
\begin{align*}
 \alpha_{MLSI} \geq \frac{\left(\inf_{X\in \cS_\rho} Q(X)\right)^2}{14\|\cL\|_{\BKM, \rho}(\kappa-1)}.
 \end{align*}
The entropy contraction inequality for an arbitrary density matrix $D$
follows by applying it to
\[
D_\delta=(1-\delta)D+\delta D_\rho,\qquad 0<\delta<1,
\]
and letting $\delta\downarrow0$.
This completes the proof of the theorem.
\end{proof}

In the following, we rewrite the value $\displaystyle \inf_{X\in \cS_\rho} Q(X)$.
We assume that $\{E_{jk}\}_{j,k=1}^d$ is a system of matrix units of $M_d(\bC)$ and $\displaystyle D_\rho=\sum_{j=1}^d s_j E_{jj}$.
Then 
\begin{align}\label{eq:kd}
    \K_{D_\rho}(E_{jk}) = t_{jk}E_{jk}, \quad t_{jk}
    =\left\{\begin{array}{ll} \displaystyle \frac{s_j-s_k}{\log s_j -\log s_k} & s_j\neq s_k\\ s_j & s_j=s_k \end{array}\right..
\end{align}
Use column vectorization
$\operatorname{vec}(X)=\sum_{k=1}^d e_k\otimes Xe_k$,
where $E_{jk}=e_je_k^*$, and let $L$ satisfy
$\operatorname{vec}(\cL(X))=L\operatorname{vec}(X)$.
Since $t_{jk}=t_{kj}$, the diagonal matrix
\[
W_\rho=\sum_{j,k=1}^d t_{jk}E_{jj}\otimes E_{kk}>0
\]
satisfies
$\operatorname{vec}(\K_{D_\rho}(X))
=W_\rho\operatorname{vec}(X)$.
By homogeneity, we obtain
\[
\inf_{X\in\cS_\rho}Q(X)
=
\min_{\substack{X=X^*\neq0\\
                 \operatorname{Tr}(D_\rho X)=0}}
\frac{
\operatorname{Re}\bigl(
\operatorname{vec}(X)^*W_\rho L\operatorname{vec}(X)
\bigr)}
{\operatorname{vec}(X)^*W_\rho\operatorname{vec}(X)}.
\]
This is the minimum generalized Rayleigh quotient on the
real linear space of self-adjoint matrices with zero $D_\rho$-mean.

\section{A Primitive QMS without MLSI}

In this section, we shall show that primitivity alone does not imply a modified logarithmic Sobolev inequality once KMS symmetry is removed. The
counterexample already occurs on $M_2(\mathbb{C})$.

Suppose that $M_2(\bC)$ is generated by unitary elements $P, Q$ in $M_2(\bC)$ subject to
\begin{align*}
PQ+QP=0, \quad P^2=Q^2=I.
\end{align*}
Let $W=-iPQ$.
We shall consider the semigroup arising from the following generator $\cL$ on $M_2(\bC)$:
\begin{align}\label{eq:m2lind1}
    \cL(X)=-i\omega[P,X]+\gamma(X-WXW),
\end{align}
where $\omega, \gamma>0$.
Let $\Phi_t=e^{-t\cL}$.
\begin{proposition}
We have that  the semigroup $\{\Phi_t\}_{t\geq 0}$ given by \eqref{eq:m2lind1} is a primitive quantum Markov semigroup.
\end{proposition}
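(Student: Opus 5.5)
Two things need checking: that \eqref{eq:m2lind1} has GKSL form, so $\Phi_t$ is a quantum Markov semigroup, and that $\ker\cL=\bC I$ with a faithful invariant state.

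\emph{GKSL form.} Write $-i\omega[P,X]=i[H,X]$ with $H=-\omega P$, which is hermitian because $P$ is a self-adjoint unitary ($P^2=I$, $P^*=P$). Next, $W=-iPQ$ is hermitian, since $W^*=iQP=-iPQ$ by anticommutation. It is also unitary, since $W^2=-PQPQ=PPQQ=I$. Taking the single jump operator $V=\sqrt{\gamma}\,W$ gives $\tfrac12\{V^*V,X\}-V^*XV=\gamma(X-WXW)$. Hence $\cL$ has the form \eqref{eq:gksl}, and $\Phi_t=e^{-t\cL}$ is completely positive and unital. Also $\cL^*(X)=i\omega[P,X]+\gamma(X-WXW)$, so $\cL^*(I)=0$ and the normalized trace $D_\rho=I/2$ is a faithful invariant state.

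\emph{Kernel.} I would work in the basis $\{I,P,Q,W\}$. Using $PQ=iW$, $QP=-iW$, $PW=-iPPQ=-iQ$, $WP=-iPQP=iQ$, I get
\begin{align*}
[P,I]&=0, & [P,P]&=0, & [P,Q]&=2iW, & [P,W]&=-2iQ.
\end{align*}
Since $W$ commutes with itself and anticommutes with $P$ and $Q$,
\begin{align*}
I-WIW&=0, & P-WPW&=2P, & Q-WQW&=2Q, & W-WWW&=0.
\end{align*}
Combining the two terms,
\begin{align*}
\cL(I)&=0, & \cL(P)&=2\gamma P, & \cL(Q)&=2\omega W+2\gamma Q, & \cL(W)&=-2\omega Q.
\end{align*}
Now write $X=aI+bP+cQ+eW$. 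Then $\cL(X)=2\gamma bP+(2\gamma c-2\omega e)Q+2\omega c\,W$. Setting this to zero gives $b=0$ from $\gamma>0$, then $c=0$ from $\omega>0$, and finally $e=0$. So $\ker\cL=\bC I$.

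Together with the faithful invariant state $\mathrm{Tr}/2$, this is exactly the definition of primitivity in the paper, so the proposition follows. The one step that needs care is the sign bookkeeping in the products of $P,Q,W$. An alternative cross-check is to represent $P=\sigma_x$, $Q=\sigma_y$, $W=\sigma_z$ and compute directly; with these choices $-iPQ=-i(i\sigma_z)=\sigma_z$.
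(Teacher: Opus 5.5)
Your proof is correct and follows the paper's argument: check GKSL form, note that $\cL^*(I)=0$ so $\tfrac12\Tr$ is a faithful invariant state, and compute $\cL$ on the basis $\{I,P,Q,W\}$. The only difference is that you solve $\cL(X)=0$ directly to get $\ker\cL=\bC I$, while the paper reads this off from the eigenvalues $0,\,2\gamma,\,\gamma\pm\sqrt{\gamma^2-4\omega^2}$; your version is slightly more explicit (including the jump operator $\sqrt{\gamma}\,W$), and it is exactly what the paper's definition of primitivity requires.
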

\begin{proof}
Note that $\cL$ satisfies the GKSL form.
We see that $\{\Phi_t\}_{t\geq 0}$ is a quantum Markov semigroup.
Note that 
\begin{align*}
\cL^*(X) =i\omega [P, X]+\gamma (X-WXW).
\end{align*}
We see that $\cL^*(I)=0$ and $\displaystyle \frac{1}{2}\Tr$ is an equilibrium state.

Observing that 
\begin{align*}
\cL(I)=0, \quad \cL(P)=2\gamma P, \quad \cL(Q)=2\omega W+2\gamma Q, \quad \cL(W)=-2\omega Q,
\end{align*}
we see that the eigenvalues of $\cL$ are $0$, $2\gamma$, $\gamma\pm \sqrt{\gamma^2-4\omega^2}$.
Note that all nonzero eigenvalues have strictly positive real parts.
We obtain that $\displaystyle \frac{1}{2}\Tr$ is the unique equilibrium state.

If $\{\Phi_t\}_{t\geq0}$ were KMS-symmetric, uniqueness of the
invariant state would force it to be symmetric with respect to
$\frac12\Tr$. This would imply $\cL=\cL^*$.
However,
\[
\cL-\cL^*=-2i\omega[P,\cdot]\neq0,
\]
so the semigroup is not KMS-symmetric.
\end{proof}

\begin{proposition}
The best constant in inequality \eqref{eq:mlsi} for the semigroup $\{\Phi_t\}_{t \geq 0}$ with generator \eqref{eq:m2lind1} is zero.
This indicates that $\{\Phi_t\}_{t \geq 0}$ does not have a modified log Sobolev inequality in the form \eqref{eq:mlsi}.
\end{proposition}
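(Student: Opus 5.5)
The plan is to exhibit a single faithful density matrix $D\neq I/2$ at which the Fisher information vanishes while the relative entropy is strictly positive. Inequality \eqref{eq:mlsi} then fails for every $\beta>0$ at small times.

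The key structural observation concerns the dissipative part $X\mapsto \gamma(X-WXW)$. Since $W$ anticommutes with $P$ and $Q$ and commutes with itself, this part sends $P\mapsto 2\gamma P$, $Q\mapsto 2\gamma Q$ and $W\mapsto 0$. So the $W$-direction is not damped at all, and only the Hamiltonian part moves it. I will work with the Bloch decomposition $D=\tfrac12(I+aP+bQ+cW)$ and compute $\cL^*$ on the basis $\{I,P,Q,W\}$. From $\cL^*(X)=i\omega[P,X]+\gamma(X-WXW)$ and the relations
\[
[P,Q]=2PQ=2iW,\qquad [P,W]=-2iQ,
\]
which follow from $PW=-iQ$ and $WP=iQ$, I get $\cL^*(W)=2\omega Q$. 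Thus $\cL^*$ maps $W$ purely into the $Q$-direction.

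Next I take $D_r=\tfrac12(I+rW)$ with $0<|r|<1$; it is faithful, and $D_\rho=I/2$. Then $\log D_r-\log(I/2)$ is a real function of $W$, hence lies in $\spanop\{I,W\}$. On the other hand $\cL^*(D_r)=r\omega Q$. Since $\Tr(Q)=\Tr(QW)=0$, this gives
\[
\cI_{\cL}(D_r)=\Tr\bigl(\cL^*(D_r)(\log D_r-\log(I/2))\bigr)=0 .
\]
However, $H(D_r\|I/2)>0$ because $D_r\neq I/2$. This contradicts \eqref{eq:mlsi2} for any $\beta>0$.

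The step needing care is that the paper states the equivalence of \eqref{eq:mlsi} and \eqref{eq:mlsi2}, but I will derive the failure of \eqref{eq:mlsi} directly from the semigroup to avoid relying on that equivalence in the non-symmetric setting. Set $h(t)=H(\Phi_t^*(D_r)\|I/2)$. By the standard derivative formula (the one implicit in the definition of $\cI_\cL$, using $\cL^*(I/2)=0$), we have $h'(0)=-\cI_{\cL}(D_r)=0$. If \eqref{eq:mlsi} held with some $\beta>0$, then $h(t)\le e^{-2\beta t}h(0)$ together with $h(0)>0$ would force $h'(0)\le -2\beta h(0)<0$, which is a contradiction. Hence the best constant is $0$.

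Optionally, one can confirm the degeneracy concretely: $\cL^*$ on $\spanop\{Q,W\}$ has the matrix $\begin{pmatrix}2\gamma&2\omega\\-2\omega&0\end{pmatrix}$ in suitable coordinates. Entropy therefore decays at a vanishing instantaneous rate exactly along the $W$-axis.
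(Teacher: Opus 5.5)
Your proof is correct and follows the paper's argument. You use the same witness $D_r=\tfrac12(I+rW)$ and the same computation $\cL^*(D_r)=r\omega Q$, paired against a logarithm lying in $\spanop\{I,W\}$, to get $\cI_{\cL}(D_r)=0<H(D_r\|I/2)$. Your extra step deriving the failure of \eqref{eq:mlsi} directly from $h'(0)=-\cI_{\cL}(D_r)=0$ is a small but welcome addition, since the paper simply invokes the stated equivalence of \eqref{eq:mlsi} and \eqref{eq:mlsi2}.
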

\begin{proof}
Suppose that $0<\varepsilon<1$ and $D_\varepsilon=\frac12(I+\varepsilon W)$.
We shall evaluate $\cI_{\cL}(D_\varepsilon)$.
Note that 
\begin{align}
    \log D_{\varepsilon}-\log(I/2)
    =\frac{1}{2}\log(1-\varepsilon^2)\,I+\operatorname{arctanh}(\varepsilon)W.
    \label{eq:non-kms-log-witness}
\end{align}
and $\cL^*(D_\varepsilon)=\omega \varepsilon Q$.
We have that 
\begin{equation}
    \begin{aligned}
    \cI_{\cL}(D_\varepsilon)
    &=\Tr\!\left(\cL^*(D_\varepsilon)
      (\log D_\varepsilon-\log(I/2))\right)\\
    &=\frac{\omega\varepsilon}{2}\log(1-\varepsilon^2)\Tr(Q)
      +\omega\varepsilon\operatorname{arctanh}(\varepsilon)\Tr(QW)
      =0.
    \end{aligned}
    \label{eq:non-kms-zero-production}
\end{equation}
Nevertheless,
\begin{equation}
    H(D_\varepsilon \| I/2)
    =\frac{1+\varepsilon}{2}\log(1+\varepsilon)
     +\frac{1-\varepsilon}{2}\log(1-\varepsilon)>0.
    \label{eq:non-kms-positive-entropy}
\end{equation}
Hence there is no positive value $\beta$ such that Equation \eqref{eq:mlsi} is true.
\end{proof}

\section{A Primitive KMS-Symmetric QMS without CMLSI}

In this section, we construct a primitive KMS-symmetric quantum Markov semigroup on $M_2(\bC)$ that does not satisfy CMLSI.

Let
\begin{equation}\label{eq:qubitdata}
 V=\begin{pmatrix}0&1\\2&0\end{pmatrix},
 \qquad
D_\rho=\frac15\begin{pmatrix}1&0\\0&4\end{pmatrix},
 \qquad
D_\rho^{1/2}=\frac1{\sqrt5}\begin{pmatrix}1&0\\0&2\end{pmatrix}.
\end{equation}
Define the Lindbladian generator $\cL$ as follows:
\begin{equation}\label{eq:m2lind2}
 \cL(X) :=\frac12\{V^*V,X\}- V^*XV.
\end{equation}
The semigroup generated by $\cL$ is denoted by $\Phi_t=e^{-t\cL}$.

\begin{proposition}\label{prop:qubit-kms-primitive}
The semigroup $\{\Phi_t\}_{t\geq 0}$ with generator \eqref{eq:m2lind2} is primitive KMS-symmetric with respect to $\rho$. 
Moreover, the semigroup is not GNS-symmetric.
\end{proposition}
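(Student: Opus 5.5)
The plan is a direct computation on the matrix units $E_{11},E_{12},E_{21},E_{22}$. We have $V=E_{12}+2E_{21}$ and $V^*=E_{21}+2E_{12}$, so $V^*V=4E_{11}+E_{22}$. A short calculation of $V^*E_{jk}V$ then gives the action of $\cL$:
\begin{align*}
\cL(E_{11})&=4E_{11}-E_{22}, & \cL(E_{22})&=E_{22}-4E_{11},\\
\cL(E_{12})&=\tfrac52E_{12}-2E_{21}, & \cL(E_{21})&=\tfrac52E_{21}-2E_{12}.
\end{align*}
The generator is already in GKSL form \eqref{eq:gksl} with $H=0$, so $\{\Phi_t\}_{t\ge0}$ is a quantum Markov semigroup. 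Moreover, $\cL$ preserves the diagonal block $\spanop\{E_{11},E_{22}\}$ and the off-diagonal block $\spanop\{E_{12},E_{21}\}$.

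\emph{Primitivity.} On the diagonal block, $\cL(aE_{11}+bE_{22})=4(a-b)E_{11}+(b-a)E_{22}$, which vanishes iff $a=b$. On the off-diagonal block, $\cL$ is represented by $\begin{pmatrix}5/2&-2\\-2&5/2\end{pmatrix}$, with eigenvalues $1/2$ and $9/2$, so it is invertible there. Hence $\ker\cL=\bC I$.

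Next I check that $D_\rho$ is invariant, using $\cL^*(X)=\frac12\{V^*V,X\}-VXV^*$. Since $VE_{11}V^*=4E_{22}$ and $VE_{22}V^*=E_{11}$, we get $VD_\rho V^*=\frac15(4E_{11}+4E_{22})$. Also $\frac12\{\diag(4,1),\diag(1,4)\}/5=\frac45 I$. Therefore $\cL^*(D_\rho)=0$. As $D_\rho$ is faithful, the semigroup is primitive.

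\emph{KMS symmetry.} Set $\Gamma(Y)=D_\rho^{1/2}YD_\rho^{1/2}$, so that $\Gamma(E_{jk})=\sqrt{s_js_k}\,E_{jk}$ with $s_1=\frac15$, $s_2=\frac45$. KMS symmetry of the semigroup is equivalent to symmetry of the sesquilinear form $(X,Y)\mapsto\Tr(X^*\Gamma\cL(Y))$ under exchanging its arguments (then differentiate/exponentiate). Since $\cL$ has real coefficients in the matrix-unit basis, it suffices to check that the matrix $\Tr(E_{ab}^*\Gamma\cL(E_{cd}))$ is symmetric. Entries between the two blocks vanish, so only two entries need comparing:
\begin{itemize}
\item $\Tr(E_{11}\Gamma\cL(E_{22}))=-4s_1=-\frac45$ and $\Tr(E_{22}\Gamma\cL(E_{11}))=-s_2=-\frac45$;
\item $\Tr(E_{12}^*\Gamma\cL(E_{21}))=-2\sqrt{s_1s_2}=-\frac45$, which equals $\Tr(E_{21}^*\Gamma\cL(E_{12}))$.
\end{itemize}
Hence $\Gamma\cL$ is Hilbert–Schmidt self-adjoint, so $\cL$, and therefore each $\Phi_t=e^{-t\cL}$, is KMS-symmetric with respect to $\rho$.

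\emph{Not GNS-symmetric.} GNS symmetry would require the form $\Tr(X^*D_\rho\cL(Y))$ to be symmetric under exchanging its arguments. However,
\[
\Tr(E_{12}^*D_\rho\cL(E_{21}))=-2s_1=-\tfrac25
\quad\text{whereas}\quad
\Tr(E_{21}^*D_\rho\cL(E_{12}))=-2s_2=-\tfrac85 .
\]
These differ, so the semigroup is not GNS-symmetric.

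The main obstacle is purely bookkeeping: the orientation conventions for $\cL$ versus $\cL^*$ and for $\Gamma$. These are settled by the explicit matrix-unit computations above.
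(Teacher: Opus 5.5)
Your proof is correct and follows essentially the same direct-computation route as the paper: a block/eigenvalue check for $\ker\cL=\bC I$, explicit invariance of $D_\rho$, and the pair $E_{12},E_{21}$ to break GNS symmetry. The one real difference is the KMS step: you check that the matrix of $\Gamma\cL$ in the matrix-unit basis is symmetric, whereas the paper derives the operator identity $\cL^*(D_\rho^{1/2}XD_\rho^{1/2})=D_\rho^{1/2}\cL(X)D_\rho^{1/2}$ from $VD_\rho^{1/2}=D_\rho^{1/2}V^*$ and $[V^*V,D_\rho^{1/2}]=0$, which also gives $\cL^*(D_\rho)=0$ for free. Also, your form $\Tr(X^*D_\rho\cL(Y))$ is not the paper's GNS form; it is harmless because $\cL$ is Hermiticity-preserving, so $\Tr(X^*D_\rho\cL(Y))=\langle \cL(Y^*),X^*\rangle_{\GNS,\rho}$, and your values $-\tfrac25,-\tfrac85$ are the paper's with the roles swapped.
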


\begin{proof}
Direct multiplication gives
\begin{equation}\label{eq:vh}
 VD_\rho^{1/2}=D_\rho^{1/2}V^*,
 \qquad
 V^*V=\begin{pmatrix}4&0\\0&1\end{pmatrix},
 \qquad [V^*V,D_\rho^{1/2}]=0.
\end{equation}
The Hilbert--Schmidt adjoint of $\cL$ is
\begin{align*}
 \cL^*(X)=\frac12\{V^*V,X\}-VXV^*.
\end{align*}
Using \eqref{eq:vh},
\begin{align*}
 \cL^*(D_\rho^{1/2}XD_\rho^{1/2})=D_\rho^{1/2} \left(\frac12\{V^*V,X\}-V^*XV\right)D_\rho^{1/2}=D_\rho^{1/2}\cL(X)D_\rho^{1/2}.
\end{align*}
This shows that the semigroup is KMS symmetric.  It also gives
$\cL^*(D_\rho)=0$. 

Suppose that 
\begin{align*}
P=\begin{pmatrix}0&1\\1&0\end{pmatrix},
 \qquad
Q=\begin{pmatrix}0&-i\\i&0\end{pmatrix},
 \qquad W=-iPQ, \qquad
WV^*V=\begin{pmatrix}4&0\\0&-1\end{pmatrix}.
\end{align*}
A direct calculation gives
\begin{equation}\label{eq:qubitspectrum}
\cL(I)=0,
 \qquad
\cL(P)=\frac{1}{2}P,
 \qquad
\cL (Q)=\frac{9}{2}Q,
 \qquad
\cL(WV^*V)=5WV^*V.
\end{equation}
The eigenvalues of $\cL$ are $\displaystyle 0,\frac12, \frac92,5$.
This shows that $\ker\cL=\bC$.
Hence the quantum Markov semigroup is primitive.

Finally, for the GNS inner product
$\langle X,Y\rangle_{\GNS,\rho}=\Tr(D_\rho X^*Y)$, one finds
\begin{align*}
 \langle E_{12}, \cL(E_{21})\rangle_{\GNS,\rho}=-\frac85,
 \qquad
 \langle \cL (E_{12}),E_{21}\rangle_{\GNS,\rho}=-\frac25.
\end{align*}
Thus the semigroup is not GNS-symmetric.
\end{proof}

The next theorem explains why a non-self-adjoint direction can obstruct complete MLSI even though ordinary MLSI only sees self-adjoint density perturbations.

\begin{theorem}\label{lem:cmlsibd}
\label{lem:endpoint-obstruction}
Let $\Phi_t=e^{-t\cL}$ be a primitive quantum Markov semigroup on
$M_d(\bC)$, where $d\geq2$, with faithful invariant state $\rho$.
Suppose that $\cL$ has the GKSL representation \eqref{eq:gksl}.

Define
\[
\mathcal C_V
=\{X\in M_d(\mathbb C):
[V_j,X]=0 \text{ for all }1\leq j\leq m\}
\]
and
\[
\nu_\rho(\cL)
=
\inf_{\substack{X\neq0\\\operatorname{Tr}(D_\rho X)=0}}
\frac{
\operatorname{Re}\operatorname{Tr}(D_\rho X^*\cL(X))
}{
\operatorname{Tr}(D_\rho X^*X)
}.
\]
Then
\[
\nu_\rho(\cL)=0
\quad\Longleftrightarrow\quad
\mathcal C_V\neq\mathbb CI.
\]
Moreover, we have 
$0\leq\alpha_{\mathrm{CMLSI}}(\cL)
\leq\nu_\rho(\cL)$.

Consequently, if there exists a non-scalar matrix $X_0$
such that
$[V_j,X_0]=0$
 for $1\leq j\leq m$,
then
$\alpha_{\mathrm{CMLSI}}(\cL)=0$.
In particular, the semigroup does not satisfy CMLSI with
any positive constant, and an $M_2(\mathbb C)$ ancilla
already witnesses this failure. No KMS-symmetry assumption
is needed, and $X_0$ need not be self-adjoint.

\end{theorem}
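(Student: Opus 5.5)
The plan is to prove the three assertions in order: the characterization of $\nu_\rho(\cL)=0$, the bound $\alpha_{\mathrm{CMLSI}}(\cL)\le\nu_\rho(\cL)$, and then the consequence, which is immediate from the first two.

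\emph{Step 1: a carré du champ identity.} From the GKSL form \eqref{eq:gksl} one checks by direct expansion that
\[
X^*\cL(X)+\cL(X)^*X-\cL(X^*X)=\sum_{j=1}^m [V_j,X]^*[V_j,X].
\]
The Hamiltonian part cancels because $[H,\cdot]$ is a derivation. Applying $\rho$ and using invariance, $\Tr(D_\rho\cL(X^*X))=\Tr(\cL^*(D_\rho)X^*X)=0$, gives
\[
2\Re\Tr(D_\rho X^*\cL(X))=\sum_j\Tr\bigl(D_\rho[V_j,X]^*[V_j,X]\bigr)\ge0 .
\]
Since $D_\rho$ is faithful, the numerator of $\nu_\rho$ vanishes exactly when $X\in\mathcal C_V$.

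The quotient is homogeneous and continuous on the unit sphere of the finite-dimensional subspace $\{\Tr(D_\rho X)=0\}$, so the infimum is attained. Hence $\nu_\rho(\cL)=0$ iff there is a nonzero $X\in\mathcal C_V$ with $\Tr(D_\rho X)=0$. If $\mathcal C_V\neq\bC I$, take a non-scalar $X$ in it and replace it by $X-\rho(X)I$, which is still in $\mathcal C_V$ and nonzero. Conversely, a nonzero such $X$ with $\rho(X)=0$ cannot be scalar. This proves the equivalence.

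\emph{Step 2: $\alpha_{\mathrm{CMLSI}}\le\nu_\rho$ via an $M_2$ ancilla.} Fix $Z\neq0$ with $\Tr(D_\rho Z)=0$; we may normalize it. Take $\sigma=\diag(1-\delta,\delta)$ and $\tau=D_\rho\otimes\sigma$, which is a faithful invariant state for $\cL\otimes\Id_2$. Set
\[
Y=Z^*\otimes E_{12}+Z\otimes E_{21},
\]
which is self-adjoint and has $\Tr(\tau Y)=0$ since it is off-diagonal in the ancilla. Define $D_\varepsilon=\tau+\varepsilon\K_\tau(Y)$.

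Because $\K_\tau$ preserves the ancilla off-diagonal blocks, $\Tr_{M_d}(D_\varepsilon)=\sigma$. The reference state $(\bE_\Phi^*\otimes\Id_2)(D_\varepsilon)=D_\rho\otimes\sigma=\tau$ is therefore independent of $\varepsilon$. The proof of Lemma \ref{lem:tech2} then goes through verbatim for $\cL\otimes\Id_2$ with $\tau$ in place of $D_\rho$, since it only used invariance of the reference state and $\Tr(\tau Y)=0$. It yields
\[
\alpha_{\mathrm{CMLSI}}\le
\frac{\int_0^1\Re\Tr(\tau^sY\tau^{1-s}(\cL\otimes\Id)(Y))\,ds}{\int_0^1\Tr(\tau^sY\tau^{1-s}Y)\,ds}.
\]
Expanding in the ancilla, the denominator becomes
\[
\int_0^1\Bigl[(1-\delta)^s\delta^{1-s}\,\Tr(D_\rho^sZ^*D_\rho^{1-s}Z)+\delta^s(1-\delta)^{1-s}\,\Tr(D_\rho^sZD_\rho^{1-s}Z^*)\Bigr]ds,
\]
and the numerator has the same structure with $\cL(Z)$ and $\cL(Z^*)$ inserted.

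As $\delta\to0$, the normalized weight $(1-\delta)^s\delta^{1-s}\big/\!\int_0^1(1-\delta)^r\delta^{1-r}\,dr$ concentrates at $s=1$, and the other weight concentrates at $s=0$. Both limits produce the GNS quantities $\Tr(D_\rho Z^*Z)$ and $\Re\Tr(D_\rho Z^*\cL(Z))$, each with total weight $\tfrac12$. This is a Laplace-type limit for continuous integrands. It gives $\alpha_{\mathrm{CMLSI}}\le \Re\Tr(D_\rho Z^*\cL(Z))/\Tr(D_\rho Z^*Z)$, and taking the infimum over $Z$ gives $\alpha_{\mathrm{CMLSI}}\le\nu_\rho$. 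Nonnegativity of $\alpha_{\mathrm{CMLSI}}$ holds by definition.

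\emph{Step 3 and the main obstacle.} The consequence is immediate: a non-scalar $X_0$ commuting with all $V_j$ gives $\nu_\rho=0$ by Step 1, hence $\alpha_{\mathrm{CMLSI}}=0$, and only an $M_2$ ancilla was used. The delicate point is Step 2. One must check the bookkeeping in $\Tr_{M_d}(D_\varepsilon)$, so that the reference state does not move with $\varepsilon$. One must also check that the two limits, $\varepsilon\to0$ at fixed $\delta$ followed by $\delta\to0$, are taken in that order. That order is legitimate because each fixed $\delta$ already yields a valid upper bound on $\alpha_{\mathrm{CMLSI}}$.
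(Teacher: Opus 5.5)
Your proposal is correct and follows essentially the same route as the paper. The equivalence comes from the carré du champ identity plus attainment of the infimum by compactness. The bound uses an $M_2$ ancilla with state $D_\rho\otimes\diag(1-\delta,\delta)$ (the paper's $T_r$ with $r=\delta/(1-\delta)$ and the basis reordered), an off-diagonal perturbation $\K_\tau(Y)$, the expansions of Lemma~\ref{lem:tech2}, and concentration of the weight at the GNS endpoint $s=1$.

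One small fix: your stated reason for why the ancilla marginal stays equal to $\sigma$ is incomplete. That $\K_\tau$ preserves the off-diagonal blocks only makes $\Tr_{M_d}(\K_\tau(Y))$ off-diagonal in $M_2$. Its off-diagonal entries are scalar multiples of $\Tr(D_\rho Z^*)$ and $\Tr(D_\rho Z)$, and they vanish precisely because you assumed $\Tr(D_\rho Z)=0$, exactly as the paper notes.
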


\begin{proof}
A direct calculation gives
\[
\cL(X^*)X+X^*\cL(X)-\cL(X^*X)
=
\sum_{j=1}^m[V_j,X]^*[V_j,X].
\]
Using $\cL^*(D_\rho)=0$, we obtain
\begin{equation}
\label{eq:noise-commutant-gns-energy}
2\operatorname{Re}\operatorname{Tr}(D_\rho X^*\cL(X))
=
\sum_{j=1}^m
\operatorname{Tr}
\bigl(D_\rho[V_j,X]^*[V_j,X]\bigr).
\end{equation}
Thus $\nu_\rho(\cL)\geq0$.

If $X_0\in\mathcal C_V\setminus\mathbb CI$, then
\[
X=X_0-\operatorname{Tr}(D_\rho X_0)I
\]
is nonzero, satisfies $\operatorname{Tr}(D_\rho X)=0$,
and commutes with every $V_j$. Hence
\eqref{eq:noise-commutant-gns-energy} gives
$\nu_\rho(\cL)=0$.

Conversely, the infimum defining $\nu_\rho(\cL)$ is
attained on the compact set
\[
\left\{
X\in M_d(\bC):\operatorname{Tr}(D_\rho X)=0,\qquad
\operatorname{Tr}(D_\rho X^*X)=1
\right\}.
\]
If the infimum is zero, faithfulness of $D_\rho$ and
\eqref{eq:noise-commutant-gns-energy} imply that a minimizer
commutes with every $V_j$. This minimizer is non-scalar,
which proves the asserted equivalence.

Let $\alpha_2(\cL)$ denote the optimal MLSI constant of
the amplified semigroup $\{\Phi_t\otimes\Id_2\}_{t\geq0}$.
Since $\alpha_{\mathrm{CMLSI}}(\cL)\leq\alpha_2(\cL)$,
it suffices to prove $\alpha_2(\cL)\leq\nu_\rho(\cL)$.
Suppose that $0<r<1$ and 
\begin{align*}
T_r=\frac1{1+r}\begin{pmatrix}r&0\\0&1\end{pmatrix},
 \qquad
\widetilde{T}_r=D_\rho \otimes T_r.
\end{align*}
Let
\begin{align*}
 Y=X\otimes E_{01}+X^*\otimes E_{10},
 \qquad
 S_r=\K_{\widetilde{T}_r}(Y).
\end{align*}
Here $\{E_{ab}\}_{a,b\in\{0,1\}}$ are the standard matrix units of $M_2(\bC)$.
By the fact that  $\Tr(D_\rho X)=0$, we see that $\Tr_A (S_r)=0.$
Thus, for sufficiently small real $\varepsilon$, the element $\widetilde{T}_{\varepsilon,r}=\widetilde{T}_r+\varepsilon S_r$ is a faithful density matrix with ancillary marginal $T_r$.

The perturbation expansions
		\eqref{eq:entropyest}--\eqref{eq:informationest} give the estimates of the relative entropy and Fisher information as follows
		\begin{align*}
			2H(\widetilde T_{\varepsilon,r}\|\widetilde T_r)
			&=\varepsilon^2\int_0^1
			\Tr\!\left(\widetilde T_r^sY
			\widetilde T_r^{1-s}Y\right)\,ds
			+\bO(\varepsilon^3)\\
			&=\frac{\varepsilon^2}{1+r}\int_0^1 r^s
			\Tr\!\left(D_\rho^sX D_\rho^{1-s}X^*\right)\,ds\\
			&\quad+\frac{\varepsilon^2}{1+r}\int_0^1 r^{1-s}
			\Tr\!\left(D_\rho^sX^*D_\rho^{1-s}X\right)\,ds
			+\bO(\varepsilon^3)\\
			&=\frac{2\varepsilon^2}{1+r}
			\int_0^1 r^{1-s}
			\Tr\!\left(D_\rho^sX^*D_\rho^{1-s}X\right)\,ds
			+\bO(\varepsilon^3),
\end{align*}
and		
\begin{align*}		
			\cI_{\cL\otimes\Id_2}(\widetilde T_{\varepsilon,r})
			&=\varepsilon^2\int_0^1\operatorname{Re}\Tr\!\left(
			\widetilde T_r^sY\widetilde T_r^{1-s}
			(\cL\otimes\Id_2)(Y)\right)\,ds
			+\bO(\varepsilon^3)\\
			&=\frac{\varepsilon^2}{1+r}\int_0^1 r^s
			\operatorname{Re}\Tr\!\left(
			D_\rho^sX D_\rho^{1-s}\cL(X)^*\right)\,ds\\
			&\quad+\frac{\varepsilon^2}{1+r}\int_0^1 r^{1-s}
			\operatorname{Re}\Tr\!\left(
			D_\rho^sX^*D_\rho^{1-s}\cL(X)\right)\,ds
			+\bO(\varepsilon^3)\\
			&=\frac{2\varepsilon^2}{1+r}
			\int_0^1 r^{1-s}\operatorname{Re}\Tr\!\left(
			D_\rho^sX^*D_\rho^{1-s}\cL(X)\right)\,ds
			+\bO(\varepsilon^3).
		\end{align*}
		The quadratic coefficient in the expansion of
$2H(\widetilde T_{\varepsilon,r}\|\widetilde T_r)$ is strictly positive for $X\neq0$.
Dividing the two expansions and letting $\varepsilon\to0$ therefore yields
		
\begin{equation}\label{eq:ratio-general}
 \lim_{\varepsilon\to0}
 \frac{\cI_{\cL\otimes\Id_2}(\widetilde{T}_{\varepsilon,r})}
      {2H(\widetilde{T}_{\varepsilon,r}\| \widetilde{T}_r)}
 =h_r(X),
\end{equation}
where
\begin{equation}\label{eq:Rrx}
h_r(X)=
 \frac{\displaystyle\int_0^1 r^{1-s} \operatorname{Re}\Tr\!\left(
D_\rho^s X^*D_\rho^{1-s}\cL(X)\right)\,ds}
      {\displaystyle\int_0^1 r^{1-s}\Tr\!\left(D_\rho^s X^*D_\rho^{1-s}X\right)\,ds}.
\end{equation}
Indeed, the two off-diagonal ancilla blocks give equal contributions after
the change of variables $s\mapsto1-s$.

If $\{\Phi_t\otimes\Id_2\}_{t\geq0}$ satisfies MLSI$(\beta)$,
then $\beta\leq h_r(X)$ for every $r\in(0,1)$.  
The probability measures proportional to $r^{1-s}ds$ converge weakly to the point mass at $s=1$ as $r\downarrow0$.  
Therefore
\begin{align*}
 \beta \leq\lim_{r\downarrow0}h_r(X)
 =\frac{\operatorname{Re}\Tr(D_\rho X^*\cL(X))}
        {\Tr(D_\rho X^*X)},
\end{align*}
which proves the claim.

\end{proof}

\begin{corollary}[The single-noise case]
\label{cor:single-noise-no-cmlsi}
Let $\Phi_t=e^{-t\cL}$ be a primitive quantum Markov
semigroup on $M_d(\mathbb C)$, $d\geq2$, with faithful
invariant density matrix $D_\rho$. If
\[
\cL(X)
=i[H,X]+\frac12\{V^*V,X\}-V^*XV
\]
for a non-scalar matrix $V$, then
\[
\alpha_{\mathrm{CMLSI}}(\cL)=0.
\]
\end{corollary}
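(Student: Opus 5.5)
This is a direct application of Theorem~\ref{lem:cmlsibd} with $m=1$ and $V_1=V$. The single noise operator $V$ itself will serve as the required commuting element $X_0$.

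The key steps are as follows.

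First, the displayed generator is already in the GKSL form \eqref{eq:gksl}: the Hamiltonian is $H$, there is one jump operator $V_1=V$, and $m=1$. The standing hypotheses of Theorem~\ref{lem:cmlsibd} therefore hold. These are that the semigroup is primitive, $d\geq 2$, and $D_\rho$ is a faithful invariant state.

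Second, we exhibit a non-scalar element of $\mathcal C_V=\{X:[V,X]=0\}$. The natural choice is $X_0=V$, since trivially $[V,V]=0$, and $V$ is non-scalar by assumption. Hence $\mathcal C_V\neq\bC I$. If one prefers a self-adjoint or traceless witness, $V-\Tr(D_\rho V)I$ also works; the theorem explicitly does not require $X_0$ to be self-adjoint.

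Third, the equivalence in Theorem~\ref{lem:cmlsibd} gives $\nu_\rho(\cL)=0$. The bound $0\leq\alpha_{\mathrm{CMLSI}}(\cL)\leq\nu_\rho(\cL)$ then forces $\alpha_{\mathrm{CMLSI}}(\cL)=0$. This is exactly the ``Consequently'' clause of the theorem.

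There is essentially no obstacle. The only point to check is that the presence of the Hamiltonian term $i[H,\cdot]$ does not affect the argument. It does not, because the identity
\[
\cL(X^*)X+X^*\cL(X)-\cL(X^*X)=\sum_j[V_j,X]^*[V_j,X]
\]
used in the theorem's proof is insensitive to the commutator with $H$. Indeed, $i[H,X^*]X+X^*i[H,X]-i[H,X^*X]=0$.

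It is also worth noting that the corollary imposes no KMS assumption, so it applies in particular to the qubit example \eqref{eq:m2lind2}. There $V$ is non-scalar, so that semigroup, although primitive and KMS-symmetric, fails CMLSI.
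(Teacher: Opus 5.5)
Your proposal is correct and is exactly the paper's argument: take $X_0=V$, which is non-scalar and satisfies $[V,X_0]=0$, and invoke Theorem~\ref{lem:cmlsibd} with $m=1$. One small slip in an aside: $V-\Tr(D_\rho V)I$ has zero $D_\rho$-mean (this is the centering the theorem's proof performs internally), but it is not in general self-adjoint or traceless. This does not affect the proof.
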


\begin{proof}
Take $X_0=V$. Then $X_0$ is non-scalar and
$[V,X_0]=0$, so
Theorem~\ref{lem:cmlsibd} applies.
\end{proof}

\begin{remark}
The condition $[V_j,X_0]=0$ involves only the noise
operators; it imposes no commutation requirement with
$V_j^*$ or $H$.
A zero GNS numerical dissipation direction need not
belong to $\ker\cL$, so the condition is compatible
with primitivity.
The theorem gives a sufficient condition for failure
of CMLSI; it does not assert its necessity or failure
of ordinary MLSI.
\end{remark}

\begin{remark}\label{thm:m2}
Corollary \ref{cor:single-noise-no-cmlsi} yields
 that the primitive KMS-symmetric quantum Markov semigroup $\{\Phi_t\}_{t\geq 0}$ with generator \eqref{eq:m2lind2} does not have a complete modified log Sobolev inequality.
Precisely, $ \alpha_{\mathrm{CMLSI}}(\cL)=0$.
An $M_2(\bC)$ ancilla already witnesses the failure.
\end{remark}



\begin{remark}
Gao and Rouz\'e prove that finite-dimensional GNS-symmetric semigroups satisfy
a complete modified logarithmic Sobolev inequality
\cite{GaoRou22}.  
The preceding example is KMS-symmetric but not
GNS-symmetric, so it exhibits a genuine separation between these two symmetry notions at the complete-MLSI level.
\end{remark}

\begin{corollary}
The non-primitive KMS-symmetric quantum Markov semigroup $\{\Phi_t\otimes \Id_{M_2(\bC)}\}_{t\geq 0}$ given by \eqref{eq:m2lind2} does not have a modified log Sobolev inequality.
\end{corollary}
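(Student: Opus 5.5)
The plan is to read the statement off from the proof of Theorem~\ref{lem:cmlsibd}. There, the lack of CMLSI was certified by an explicit family of states on $M_2(\bC)\otimes M_2(\bC)$, not on larger ancillas. The amplified semigroup $\Psi_t=\Phi_t\otimes\Id_{M_2(\bC)}$ is again a KMS-symmetric quantum Markov semigroup. Its generator is $\cL\otimes\Id_2$, with GKSL data $H\otimes I$ and $V\otimes I$, and it is KMS-symmetric with respect to $\rho\otimes\tau$ for any faithful state $\tau$, e.g.\ the normalized trace. This follows because the KMS-symmetry identity tensorizes: $\cL^*\otimes\Id$ intertwines with conjugation by $(D_\rho\otimes D_\tau)^{1/2}$ exactly as $\cL^*$ does with $D_\rho^{1/2}$. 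It is non-primitive, since $\ker(\cL\otimes\Id_2)=I\otimes M_2(\bC)$ by primitivity of $\cL$. Its ergodic projection is $\bE_\Phi\otimes\Id_2$, so $(\bE_\Phi^*\otimes\Id_2)(D)=D_\rho\otimes\Tr_{M_2}(D)$ for a density $D$ on the product. Hence its MLSI constant in the sense of \eqref{eq:mlsi2} is precisely the quantity $\alpha_2(\cL)$ from the proof of Theorem~\ref{lem:cmlsibd}.

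Next I would invoke the bound $\alpha_2(\cL)\leq\nu_\rho(\cL)$, which the proof of Theorem~\ref{lem:cmlsibd} actually establishes; the CMLSI statement is deduced from it only afterwards. For the generator \eqref{eq:m2lind2} there is a single non-scalar noise operator $V$, and $X_0=V$ commutes with it. So $\mathcal C_V\neq\bC I$, and the equivalence in Theorem~\ref{lem:cmlsibd} gives $\nu_\rho(\cL)=0$. Concretely, $X=V-\Tr(D_\rho V)I=V$, because $V$ is off-diagonal and $D_\rho$ is diagonal. This $X$ has zero GNS dissipation by \eqref{eq:noise-commutant-gns-energy}. Therefore $\alpha_2(\cL)=0$, meaning no $\beta>0$ satisfies \eqref{eq:mlsi2} for $\Psi_t$. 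By the stated equivalence, no $\beta>0$ satisfies \eqref{eq:mlsi} either.

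The only point needing care is that the witnesses are legitimate for the MLSI of $\Psi$. The states $\widetilde T_{\varepsilon,r}$ are faithful densities on $M_2\otimes M_2$ with ancilla marginal $T_r$, so $\bE_\Psi^*(\widetilde T_{\varepsilon,r})=D_\rho\otimes T_r=\widetilde T_r$. With this, the ratio limit \eqref{eq:ratio-general} gives $\beta\leq h_r(V)$ for every $r\in(0,1)$, and $h_r(V)\to0$ as $r\downarrow0$. One can also check this directly: $V^*D_\rho V$ and $D_\rho$ give $\Tr(D_\rho V^*[V,V])=0$ in the numerator, while the denominator stays positive. I expect no real obstacle. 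The mild subtlety is in the limiting step: the denominator's limit $\Tr(D_\rho V^*V)=\tfrac{8}{5}$ is nonzero, so the weak-convergence argument is not a $0/0$ limit.
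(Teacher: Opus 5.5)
Your proposal is correct and follows the paper's route: the paper also reads the corollary off from the proof of Theorem~\ref{lem:cmlsibd}, where $\alpha_2(\cL)\leq\nu_\rho(\cL)=0$ is established with an $M_2(\bC)$ ancilla. You usefully supply details the paper leaves implicit: KMS symmetry of the amplification, $\ker(\cL\otimes\Id_2)=I\otimes M_2(\bC)$, and $\bE_\Psi^*(D)=D_\rho\otimes D_R$. One small slip: in your optional direct check, the vanishing numerator is $\tfrac12\Tr(D_\rho[V,V]^*[V,V])$ by \eqref{eq:noise-commutant-gns-energy}, not $\Tr(D_\rho V^*[V,V])$, but this does not affect the argument.
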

\begin{proof}
It follows from the proof of Theorem~\ref{lem:cmlsibd}, which shows that
an $M_2(\bC)$ ancilla already witnesses the failure.
\end{proof}

\section{CMLSI for Graph-based KMS Symmetric QMS}

In this section, we shall investigate a primitive KMS-symmetric quantum Markov semigroup arising from an undirected graph and obtain the complete MLSI.

Suppose that $\cG=(V(\cG), E(\cG))$ with the vertex set $V(\cG)=\{1,2, \ldots, n\}$ is a finite simple undirected graph with a positive weight $\gamma_{jk}>0$ on the edge $e_{jk}$ in $E(\cG)$, where $\gamma_{jk}=\gamma_{kj}$. 
We set $\gamma_{jk}=0$ whenever $e_{jk}\notin E(\cG)$.

Let $\{E_{jk}\}_{j,k=1}^n$ be the system of matrix units of $M_n(\bC)$.
Let $\rho$ be a faithful state on $M_n(\bC)$ with density $\displaystyle D_\rho=\sum_{j=1}^n s_j E_{jj}$.
Set
\[
\gamma_{\min}=\min_{e_{jk}\in E(\cG)}\gamma_{jk},
\qquad
s_{\min}=\min_{1\leq j\leq n}s_j.
\]

Let $d_j$ be the weighted degree with respect to $\rho$ at vertex $j$, which is defined as follows:
\begin{align*}
    d_j=\sum_{k: e_{jk}\in E(\cG)}\gamma_{jk} s_k, \quad d_{\max}=\max_j\{d_j\}.
\end{align*}
Let $\vec{s}=(s_1, \ldots, s_n)^{\mathsf{T}}$ and
\begin{align*}
\varphi_{\cG}=\inf_{\vec{s}\perp \vec{t},\  \vec{t}\neq 0}\frac{\displaystyle \sum_{e_{jk}\in E(\cG)}\gamma_{jk}s_js_k |t_j-t_k|^2}{\displaystyle \sum_{j=1}^n s_j|t_j|^2},
\end{align*}
where $\vec{t}=(t_1, \ldots, t_n)^{\mathsf{T}}$.
We have that 
\begin{align*}
 \frac{\gamma_{\min}\min_{e_{jk}}s_js_k}{\max_j s_j}   \lambda_{\cG}\leq \varphi_{\cG}\leq 2d_{\max},
\end{align*}
where $\lambda_\cG$ is the spectral gap of the unnormalized Laplacian of the unweighted graph $\cG$.

For each edge $e_{jk}\in E(\cG)$, we set
\begin{align}
V_{jk}=\sqrt{s_j}E_{jk}+\sqrt{s_k} E_{kj}\in M_n(\bC).
\end{align}
The Lindbladian associated to the graph $\cG$ is given by
\begin{align}\label{eq:mnlind1}
    \cL(X) =\sum_{e_{jk}\in E(\cG)} \gamma_{jk} \left(\frac{1}{2}\{V_{jk}^*V_{jk}, X\}- V_{jk}^* X V_{jk}\right), \quad X\in M_n(\bC).
\end{align}
By the GKSL representation of the generator of the quantum Markov semigroup \cite{Lin76}, we see that the corresponding semigroup is a quantum Markov semigroup, denoted by $\{\Phi_t\}_{t\geq 0}$.
Moreover, we have that 
\begin{align}
\cL(E_{jj}) =& \sum_{e_{jk}\in E(\cG)} \gamma_{jk} (s_k E_{jj}-s_jE_{kk}), \label{eq:lindbase1}\\
    \cL(E_{jk})= & \frac{d_j+d_k}{2} E_{jk}-\mathbbm{1}_{e_{jk}\in E(\cG)} \gamma_{jk}\sqrt{s_js_k}E_{kj}, \quad j\neq k. \label{eq:lindbase2}
\end{align}

\begin{proposition}
The semigroup $\{\Phi_t\}_{t\geq 0}$ with the Lindbladian \eqref{eq:mnlind1} is a KMS-symmetric quantum Markov semigroup with respect to $\rho$.
If there exists an edge $e_{jk}\in E(\cG)$ such that $s_j \neq s_k$, then $\{\Phi_t\}_{t\geq 0}$ is not GNS-symmetric.
\end{proposition}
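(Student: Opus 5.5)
The plan mirrors the proof of Proposition~\ref{prop:qubit-kms-primitive}: reduce KMS symmetry of $\cL$ to an algebraic intertwining relation for each jump operator $V_{jk}$, and disprove GNS symmetry with one explicit pair of matrix units.

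\textbf{Step 1: KMS symmetry.} Each $V_{jk}$ is an edge-wise analogue of the qubit operator $V$. Writing $D_\rho^{1/2}=\sum_l \sqrt{s_l}E_{ll}$, one computes
\[
V_{jk}D_\rho^{1/2}=\sqrt{s_js_k}\,(E_{jk}+E_{kj}), \qquad
D_\rho^{1/2}V_{jk}^*=D_\rho^{1/2}\bigl(\sqrt{s_j}E_{kj}+\sqrt{s_k}E_{jk}\bigr)=\sqrt{s_js_k}\,(E_{kj}+E_{jk}),
\]
so that
\[
V_{jk}D_\rho^{1/2}=D_\rho^{1/2}V_{jk}^*, \qquad D_\rho^{1/2}V_{jk}=V_{jk}^*D_\rho^{1/2},
\]
where the second identity is the adjoint of the first. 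Moreover $V_{jk}^*V_{jk}=s_kE_{jj}+s_jE_{kk}$ is diagonal, so it commutes with $D_\rho^{1/2}$.

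The Hilbert--Schmidt adjoint of \eqref{eq:mnlind1} is
\[
\cL^*(X)=\sum_{e_{jk}\in E(\cG)}\gamma_{jk}\Bigl(\tfrac12\{V_{jk}^*V_{jk},X\}-V_{jk}XV_{jk}^*\Bigr).
\]
Exactly as in the qubit case, the intertwining relations move $D_\rho^{1/2}$ through each term: $V_{jk}D_\rho^{1/2}XD_\rho^{1/2}V_{jk}^*=D_\rho^{1/2}V_{jk}^*XV_{jk}D_\rho^{1/2}$, while the anticommutator terms pass through directly. This gives
\[
\cL^*(D_\rho^{1/2}XD_\rho^{1/2})=D_\rho^{1/2}\cL(X)D_\rho^{1/2}.
\]
This identity is equivalent to KMS symmetry of $\cL$, hence of $\Phi_t=e^{-t\cL}$. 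Taking $X=I$ gives $\cL^*(D_\rho)=0$, so $\rho$ is invariant.

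\textbf{Step 2: failure of GNS symmetry.} Fix an edge $e_{jk}\in E(\cG)$ with $s_j\neq s_k$ and test GNS symmetry, $\langle X,\cL(Y)\rangle_{\GNS,\rho}=\langle\cL(X),Y\rangle_{\GNS,\rho}$, on $X=E_{jk}$, $Y=E_{kj}$. By \eqref{eq:lindbase2},
\[
\cL(E_{kj})=\tfrac{d_j+d_k}{2}E_{kj}-\gamma_{jk}\sqrt{s_js_k}\,E_{jk}.
\]
Since $E_{jk}^*E_{kj}=E_{kk}E_{kj}$ has no diagonal entries, its $D_\rho$-weighted trace vanishes, and
\[
\Tr\bigl(D_\rho E_{jk}^*\cL(E_{kj})\bigr)=-\gamma_{jk}\sqrt{s_js_k}\,\Tr(D_\rho E_{kj}E_{jk})=-\gamma_{jk}\sqrt{s_js_k}\,s_k.
\]
In the same way,
\[
\Tr\bigl(D_\rho\,\cL(E_{jk})^*E_{kj}\bigr)=-\gamma_{jk}\sqrt{s_js_k}\,\Tr(D_\rho E_{jj})=-\gamma_{jk}\sqrt{s_js_k}\,s_j.
\]
These two values differ whenever $s_j\neq s_k$, because $\gamma_{jk}>0$ and $s_js_k>0$. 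Hence the semigroup is not GNS-symmetric.

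\textbf{Expected difficulty.} The only delicate point is the bookkeeping of $j,k$ in the intertwining relation $V_{jk}D_\rho^{1/2}=D_\rho^{1/2}V_{jk}^*$ and in the index products of Step~2. The weights $\sqrt{s_j}$ and $\sqrt{s_k}$ are placed on $E_{jk}$ and $E_{kj}$ precisely so that these relations hold; after that, Step~1 is a term-by-term repetition of the qubit computation.
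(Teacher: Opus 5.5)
Your proof is correct and follows the paper's own route. The intertwining relation $V_{jk}D_\rho^{1/2}=D_\rho^{1/2}V_{jk}^*$, together with the diagonality of $V_{jk}^*V_{jk}$, gives $\cL^*(D_\rho^{1/2}XD_\rho^{1/2})=D_\rho^{1/2}\cL(X)D_\rho^{1/2}$, and a matrix-unit test then rules out GNS symmetry; the paper uses the swapped pair and obtains the discrepancy $\gamma_{jk}\sqrt{s_js_k}(s_k-s_j)$.

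Delete your ``second identity''. The adjoint of $V_{jk}D_\rho^{1/2}=D_\rho^{1/2}V_{jk}^*$ is that same identity, and $D_\rho^{1/2}V_{jk}-V_{jk}^*D_\rho^{1/2}=(s_j-s_k)(E_{jk}-E_{kj})\neq0$ when $s_j\neq s_k$, so the claim is false. It does no harm, since Step 1 only uses the first identity on both sides of $X$. Similarly, $E_{jk}^*E_{kj}=E_{kj}^2=0$, not $E_{kk}E_{kj}$, though your conclusion that the weighted trace vanishes is right.
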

\begin{proof}
By a direct computation, we have that 
\begin{align*}
D_\rho^{1/2}V_{jk}^*=V_{jk}D_\rho^{1/2}, \quad V_{jk}^* V_{jk}=s_k E_{jj} +s_jE_{kk}.
\end{align*}
This shows that $[V_{jk}^*V_{jk}, D_\rho]=0$ and $V_{jk}D_\rho V_{jk}^*=V_{jk}^*V_{jk}D_\rho$. 
Hence for any $X\in M_n(\bC)$, 
\begin{align*}
    \cL^*(D_\rho^{1/2} X D_\rho^{1/2})=D_\rho^{1/2}\cL(X) D_\rho^{1/2},
\end{align*}
which implies that $\{\Phi_t\}_{t\geq 0}$ is KMS-symmetric with respect to $\rho$.
Note that 
\begin{align*}
\Tr(\cL(E_{jk}) D_\rho E_{jk})-\Tr(D_\rho \cL(E_{jk})E_{jk})=\gamma_{jk}\sqrt{s_js_k} \left(s_k-s_j \right).
\end{align*}
If there exists an edge $e_{jk}$ with $s_j\neq s_k$, we see that $\{\Phi_t\}_{t\geq 0}$ is not GNS-symmetric.

\end{proof}

\begin{lemma}
We have that 
\begin{align*}
\|\cL\|_{\BKM, \rho}  \leq 2 d_{\max}.
\end{align*}
\end{lemma}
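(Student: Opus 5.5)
The plan is to diagonalize the BKM geometry in the matrix-unit basis and reduce the operator-norm bound to finite blocks. By \eqref{eq:kd}, $\K_{D_\rho}(E_{jk})=t_{jk}E_{jk}$, so
\[
\langle X,Y\rangle_\rho=\Tr(X^*\K_{D_\rho}(Y))=\sum_{j,k}t_{jk}\overline{x_{jk}}\,y_{jk}.
\]
Thus the matrix units are mutually BKM-orthogonal, with $\|E_{jk}\|_\rho^2=t_{jk}$, and the BKM norm is a weighted $\ell^2$ norm.

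By \eqref{eq:lindbase1}--\eqref{eq:lindbase2}, $\cL$ leaves invariant the diagonal subalgebra $\mathcal D=\spanop\{E_{jj}\}$. For each unordered pair $j\neq k$, it also leaves invariant the two-dimensional space $\spanop\{E_{jk},E_{kj}\}$. These subspaces are mutually BKM-orthogonal and together span $M_n(\bC)$. Hence $\|\cL\|_{\BKM,\rho}$ is the maximum of the norms of the restricted blocks, and it suffices to bound each block by $2d_{\max}$.

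\emph{Off-diagonal blocks.} Since $t_{jk}=t_{kj}$, both basis vectors $E_{jk},E_{kj}$ carry the same weight. The BKM norm on this block is therefore a multiple of the Euclidean norm in these coordinates. By \eqref{eq:lindbase2}, the block matrix is
\[
\begin{pmatrix} a & -c\\ -c & a\end{pmatrix},
\qquad a=\tfrac{d_j+d_k}{2},\quad c=\mathbbm 1_{e_{jk}\in E(\cG)}\gamma_{jk}\sqrt{s_js_k}.
\]
Its norm is $a+c$. We have $a\leq d_{\max}$. For an edge, $c\leq\gamma_{jk}\max(s_j,s_k)\leq\max(d_j,d_k)\leq d_{\max}$, because $\gamma_{jk}s_k$ is one summand of $d_j$ and $\gamma_{jk}s_j$ is one summand of $d_k$. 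So this block has norm at most $2d_{\max}$.

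\emph{Diagonal block.} Here $t_{jj}=s_j$, so the norm is $\sum_j s_j|x_j|^2$. Collecting the coefficient of $E_{jj}$ in $\cL(\sum_k x_kE_{kk})$ from \eqref{eq:lindbase1} gives
\[
(\cL x)_j=d_jx_j-\sum_k\gamma_{jk}s_jx_k .
\]
This can be written as $\cL|_{\mathcal D}=\Delta-SA$ in the original coordinates, where $\Delta=\diag(d_j)$, $S=\diag(s_j)$ and $A=(\gamma_{jk})$. The $SA$ form should be re-derived carefully in the write-up; the alternative order $AS$ leads to the same estimate. Conjugating by $S^{1/2}$ converts the weighted norm into the Euclidean one and the operator into $\Delta-S^{1/2}AS^{1/2}$, which is a real symmetric matrix. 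Hence the norm is at most $d_{\max}+\|S^{1/2}AS^{1/2}\|$.

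The last term is where the only real estimate lies; the rest is bookkeeping. I would bound $B=S^{1/2}AS^{1/2}$ by a Schur test (or Perron--Frobenius for this nonnegative symmetric matrix) with the positive test vector $u=(\sqrt{s_j})_j$:
\[
(Bu)_j=\sqrt{s_j}\sum_k\gamma_{jk}s_k=\sqrt{s_j}\,d_j\leq d_{\max}u_j .
\]
This gives $\|B\|\leq d_{\max}$, so the diagonal block also has norm at most $2d_{\max}$. Combining the blocks yields $\|\cL\|_{\BKM,\rho}\leq 2d_{\max}$.
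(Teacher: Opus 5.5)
Your proof is correct and follows the paper's argument: the same BKM-orthogonal, $\cL$-invariant splitting into the diagonal block $\cH_0$ and the two-dimensional blocks $\cH_{jk}$. On $\cH_{jk}$ you get the bound $\frac{d_j+d_k}{2}+\gamma_{jk}\sqrt{s_js_k}\le 2d_{\max}$, which you justify explicitly where the paper does not. On the diagonal block the paper bounds the weighted Rayleigh quotient $\sum_{e_{jk}}\gamma_{jk}s_js_k|t_j-t_k|^2/\sum_j s_j|t_j|^2$, while you use $\|\Delta\|+\|S^{1/2}AS^{1/2}\|$ with a Schur test; both are fine.

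One correction is needed. By \eqref{eq:lindbase1}, the coefficient in the diagonal block is $s_k$, not $s_j$:
\[
(\cL x)_j=d_jx_j-\sum_k\gamma_{jk}s_kx_k,
\qquad
\cL|_{\mathcal D}=\Delta-AS.
\]
You can check this against $\cL(I)=0$, which forces $(\Delta-AS)\mathbf 1=0$. The order also matters, contrary to your remark that either order gives the same estimate. Only $\Delta-AS$ conjugates to the symmetric matrix $S^{1/2}(\Delta-AS)S^{-1/2}=\Delta-S^{1/2}AS^{1/2}$, which is what your Schur-test step needs. The order $\Delta-SA$ would instead give $\Delta-S^{3/2}AS^{-1/2}$, which is not symmetric and not controlled by $d_{\max}$.
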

\begin{proof}
Let $\cH_0=\text{span}\{E_{jj}:j=1, \ldots, n\}$ and $\cH_{jk}=\text{span}\{E_{jk}, E_{kj}\}$ with $j<k$.
Then by Equation \eqref{eq:kd}, we see that
\begin{align*}
M_n(\bC)=\cH_0\bigoplus\bigoplus_{j<k} \cH_{jk},
\end{align*}
where the orthogonality is taken with respect to $\langle \cdot,\cdot \rangle_{ \rho}$.
By Equations \eqref{eq:lindbase1} and \eqref{eq:lindbase2},
the subspaces $\cH_0$ and $\cH_{jk}$ are invariant under $\cL$.

The maximal eigenvalue of $\cL$ on $\cH_{jk}$ is 
\begin{align*}
\frac{d_j+d_k}{2}+\mathbbm{1}_{e_{jk}\in E(\cG)} \gamma_{jk}\sqrt{s_js_k}\leq 2d_{\max}.
\end{align*}
The maximal eigenvalue of $\cL$ on $\cH_0$ is \begin{align*}
    \sup_{\vec{s}\perp \vec{t},\  \vec{t}\neq 0}\frac{\displaystyle \sum_{e_{jk}\in E(\cG)}\gamma_{jk}s_js_k |t_j-t_k|^2}{\displaystyle \sum_{j=1}^n s_j|t_j|^2} \leq 2d_{\max}.
\end{align*}
This shows that
\[
\|\mathcal L\|_{\mathrm{BKM},\rho}
=
\max\left\{
\lambda_{\max}\!\left(
\left.\mathcal L\right|_{\mathcal H_0}
\right),
\max_{j<k}\left\{
\frac{d_j+d_k}{2}
+
1_{e_{jk}\in E(G)}\gamma_{jk}\sqrt{s_js_k}
\right\}
\right\}
\le 2d_{\max}.
\]
This completes the proof of the lemma.

\end{proof}

\begin{lemma}\label{lem:lowerbd1}
Suppose that $G$ is connected and $n\geq 3$. Then we have that
\begin{align*}
 \inf_{X\in \cS_\rho} Q(X)=& \min\left\{\varphi_{\cG}, \min_{j<k}\left\{ \frac{d_j+d_k}{2}-\mathbbm{1}_{e_{jk}\in E(\cG)} \gamma_{jk}\sqrt{s_js_k}\right\}\right\} \\
 \geq &   \frac{1}{2}\gamma_{\min}s_{\min}^2 \lambda_{\cG}.
\end{align*}
\end{lemma}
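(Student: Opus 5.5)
The plan is to diagonalize the quadratic form in $Q$ using the same orthogonal decomposition as in the previous lemma. We write
\[
M_n(\bC)=\cH_0\oplus\bigoplus_{j<k}\cH_{jk}.
\]
By \eqref{eq:kd}, these summands are mutually orthogonal for $\langle\cdot,\cdot\rangle_\rho$, and by \eqref{eq:lindbase1}--\eqref{eq:lindbase2} each is invariant under $\cL$. For self-adjoint $X$ we have
\[
\int_0^1\Tr(D_\rho^sXD_\rho^{1-s}\cL(X))\,ds=\langle X,\cL(X)\rangle_\rho .
\]
Decompose $X=X_0+\sum_{j<k}X_{jk}$. Since $X=X^*$ and the involution preserves each summand, every component is again self-adjoint.

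Because the blocks are orthogonal and $\cL$-invariant, the numerator and denominator of $Q$ both split additively over the blocks. The constraint $\Tr(D_\rho X)=0$ involves only $X_0$. It follows that $\inf Q$ is the minimum of the infima of the Rayleigh quotient over the individual blocks, each taken over nonzero admissible self-adjoint elements. Every block does contain such elements: $\cH_0$ contains a nonzero $\vec t\perp\vec s$ because $n\geq2$, and each $\cH_{jk}$ contains $E_{jk}+E_{kj}$.

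\emph{Diagonal block.} For $X_0=\sum_j t_jE_{jj}$ with $t_j$ real, the weight is $\langle X_0,X_0\rangle_\rho=\sum_j s_jt_j^2$. From \eqref{eq:lindbase1} one gets
\[
(\cL X_0)_{jj}=\sum_k\gamma_{jk}s_k(t_j-t_k).
\]
Symmetrizing over ordered pairs then gives
\[
\langle X_0,\cL X_0\rangle_\rho=\sum_{e_{jk}}\gamma_{jk}s_js_k(t_j-t_k)^2 .
\]
The constraint $\Tr(D_\rho X_0)=0$ is exactly $\vec t\perp\vec s$, so the infimum over this block is $\varphi_\cG$. Restricting to real $\vec t$ loses nothing, since real and imaginary parts separate.

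\emph{Off-diagonal blocks.} On $\cH_{jk}$ the BKM weight is the same number $t_{jk}$ on $E_{jk}$ and on $E_{kj}$. Put $a=\frac{d_j+d_k}2$ and $c=\mathbbm 1_{e_{jk}\in E(\cG)}\gamma_{jk}\sqrt{s_js_k}$. By \eqref{eq:lindbase2}, $\cL$ acts on $\cH_{jk}$ by the symmetric matrix $\begin{pmatrix}a&-c\\-c&a\end{pmatrix}$. For self-adjoint $X=xE_{jk}+\bar xE_{kj}$ the quotient is
\[
\frac{2a|x|^2-2c\Re(x^2)}{2|x|^2},
\]
which is minimized at real $x$ and gives $a-c$. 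This proves the equality in the lemma.

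\emph{Lower bound.} For $\varphi_\cG$ we use the stated inequality
\[
\varphi_\cG\geq \frac{\gamma_{\min}\min_{e_{jk}}s_js_k}{\max_j s_j}\,\lambda_\cG\geq\gamma_{\min}s_{\min}^2\lambda_\cG ,
\]
which holds since $\max_j s_j\leq1$. For the off-diagonal blocks we first claim $a-c\geq\frac12\gamma_{\min}s_{\min}$.
\begin{itemize}[leftmargin=*]
\item If $e_{jk}\notin E(\cG)$, then $c=0$. Connectivity gives $d_j\geq\gamma_{\min}s_{\min}$ and likewise for $d_k$, so $a\geq\gamma_{\min}s_{\min}$.
\item If $e_{jk}\in E(\cG)$, then
\[
d_j+d_k-2\gamma_{jk}\sqrt{s_js_k}=\gamma_{jk}\bigl(\sqrt{s_j}-\sqrt{s_k}\bigr)^2+(\text{contributions of other neighbours of $j$ or $k$}).
\]
Since $\cG$ is connected with $n\geq3$, at least one of $j,k$ has a further neighbour. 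Hence $a-c\geq\frac12\gamma_{\min}s_{\min}$.
\end{itemize}
Finally we compare with $\lambda_\cG$, using $\lambda_\cG\leq n$ and $s_{\min}\leq 1/n$:
\[
s_{\min}^2\lambda_\cG\leq s_{\min}\cdot\frac{\lambda_\cG}{n}\leq s_{\min}.
\]
Both kinds of block infima are therefore at least $\frac12\gamma_{\min}s_{\min}^2\lambda_\cG$.

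The main point needing care is the reduction of $\inf Q$ to the block infima. One must check that the cross terms vanish in the real part of the numerator, which follows from orthogonality together with invariance, and that every block genuinely carries admissible self-adjoint directions, so that the minimum over blocks is actually attained.
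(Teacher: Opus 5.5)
Your proposal is correct and follows essentially the same route as the paper: the BKM-orthogonal, $\cL$-invariant decomposition $\cH_0\oplus\bigoplus_{j<k}\cH_{jk}$ identifies the block minima as $\varphi_\cG$ and $\frac{d_j+d_k}{2}-\mathbbm{1}_{e_{jk}\in E(\cG)}\gamma_{jk}\sqrt{s_js_k}$, and the lower bound then comes from $\varphi_\cG\geq\gamma_{\min}s_{\min}^2\lambda_\cG$, the off-diagonal bound $\frac12\gamma_{\min}s_{\min}$ (using connectivity and $n\geq3$), and $s_{\min}\lambda_\cG\leq1$. You make explicit what the paper leaves implicit, namely the reduction of the infimum to block Rayleigh quotients and the self-adjoint parametrization $xE_{jk}+\bar xE_{kj}$, and your completing-the-square step is equivalent to the paper's AM--GM step.
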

\begin{proof}
Note that the minimal eigenvalue of $\cL$ on $\cH_{jk}$ is 
\begin{align*}
\frac{d_j+d_k}{2}-\mathbbm{1}_{e_{jk}\in E(\cG)} \gamma_{jk}\sqrt{s_js_k}.
\end{align*}
The smallest positive eigenvalue of $\cL$ on $\cH_0$ is $\varphi_{\cG}$.
Note that 
\begin{align*}
    \frac{d_j+d_k}{2}-\mathbbm{1}_{e_{jk}\in E(\cG)} \gamma_{jk}\sqrt{s_js_k}
    \geq  \frac{d_j+d_k}{2}-\frac{1}{2}\mathbbm{1}_{e_{jk}\in E(\cG)} \gamma_{jk}(s_j+s_k)
    \geq \frac{1}{2}\gamma_{\min}s_{\min}
\end{align*}
and 
\begin{align*}
    \varphi_{\cG} \geq \gamma_{\min}s_{\min}^2 \lambda_{\cG}.
\end{align*}
Since $\cG$ is a simple graph on $n$ vertices, its unnormalized
Laplacian satisfies $\lambda_{\cG}\leq n$.
Also, $\sum_{j=1}^n s_j=1$ gives $s_{\min}\leq 1/n$.
Thus $s_{\min}\lambda_{\cG}\leq 1$, and we obtain
\begin{align*}
\inf_{X\in \cS_\rho} Q(X) \geq \frac{1}{2}\gamma_{\min}s_{\min}^2 \lambda_{\cG}.
\end{align*}
This completes the proof of the lemma.
\end{proof}

\begin{proposition}
Suppose that $\cG$ is connected and $n\geq2$.
If $n\geq3$, the semigroup $\{\Phi_t\}_{t\geq0}$ given by
\eqref{eq:mnlind1} is primitive.
If $n=2$, it is primitive if and only if $s_1\neq s_2$.
\end{proposition}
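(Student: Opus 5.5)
The plan is to compute $\ker\cL$ directly, using the block decomposition from the proof of the bound $\|\cL\|_{\BKM,\rho}\leq 2d_{\max}$. There, Equations \eqref{eq:lindbase1}--\eqref{eq:lindbase2} were used to show that
\[
M_n(\bC)=\cH_0\oplus\bigoplus_{j<k}\cH_{jk}
\]
with each summand invariant under $\cL$. Since the semigroup is KMS-symmetric with respect to the faithful state $\rho$, it already has a faithful invariant state. Primitivity is therefore equivalent to $\ker\cL=\bC I$. Because every summand is invariant, $\ker\cL$ is the direct sum of the kernels of the restrictions, so it suffices to show two things: $\ker(\cL|_{\cH_0})=\bC I$, and $\cL|_{\cH_{jk}}$ is injective for every $j<k$.

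On $\cH_0$, write $X=\sum_l t_lE_{ll}$. By \eqref{eq:lindbase1}, the $E_{jj}$-coefficient of $\cL(X)$ is
\[
\sum_{k}\gamma_{jk}s_k(t_j-t_k).
\]
This is the generator of a continuous-time random walk on $\cG$ with positive rates $\gamma_{jk}s_k$ on edges. I will apply a maximum principle. Suppose $\cL(X)=0$ and $X\neq 0$. Take real and imaginary parts separately, and let $j$ be a vertex where $\Re t_j$ is maximal. The $j$-th equation is a sum of nonnegative terms equal to zero, so $\Re t_k=\Re t_j$ for all neighbours $k$ of $j$. Connectivity then propagates this equality to all vertices, and the same argument applies to $\Im t$. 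Hence $X\in\bC I$. Alternatively, one could use the quadratic form
\[
\sum_{e_{jk}}\gamma_{jk}s_js_k|t_j-t_k|^2,
\]
which vanishes only for constant $\vec t$ on a connected graph.

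On $\cH_{jk}$, in the basis $E_{jk},E_{kj}$, \eqref{eq:lindbase2} shows that $\cL$ acts as the symmetric matrix
\[
\begin{pmatrix}a&-c\\-c&a\end{pmatrix},
\qquad
a=\frac{d_j+d_k}{2},
\qquad
c=\mathbbm{1}_{e_{jk}\in E(\cG)}\gamma_{jk}\sqrt{s_js_k}.
\]
Its eigenvalues are $a\pm c$, so the block has nontrivial kernel exactly when $a=c$. Connectivity with $n\geq2$ gives every vertex positive degree, so $d_j>0$ for all $j$. Hence for a non-edge $a>0=c$. For an edge, I chain three inequalities:
\[
a-c\;\geq\;\frac{d_j+d_k}{2}-\frac{\gamma_{jk}(s_j+s_k)}{2}\;\geq\;0,
\]
where the first step is AM--GM, $\sqrt{s_js_k}\leq\frac{s_j+s_k}{2}$, and the second uses $d_j\geq\gamma_{jk}s_k$ and $d_k\geq\gamma_{jk}s_j$.

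The main step is the equality analysis in this chain. Equality $a=c$ forces two conditions. First, equality in AM--GM, so $s_j=s_k$. Second, $d_j=\gamma_{jk}s_k$ and $d_k=\gamma_{jk}s_j$, which means $k$ is the only neighbour of $j$ and $j$ is the only neighbour of $k$. Then $\{j,k\}$ is a connected component of $\cG$. If $n\geq3$ this contradicts connectivity, so every block is injective and the semigroup is primitive.

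If $n=2$, the graph is the single edge $e_{12}$, and
\[
a-c=\frac{\gamma_{12}}{2}\bigl(\sqrt{s_1}-\sqrt{s_2}\bigr)^2 .
\]
This vanishes iff $s_1=s_2$. In that case $E_{12}+E_{21}$ is a non-scalar element of $\ker\cL$, so the semigroup is not primitive. Otherwise both blocks are injective and the semigroup is primitive, which gives the stated equivalence.
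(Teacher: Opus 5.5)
Your proposal is correct and follows essentially the paper's route: the invariant decomposition $\cH_0\oplus\bigoplus_{j<k}\cH_{jk}$, the eigenvalues $a\pm c$ on the off-diagonal blocks, and an equality analysis. Your AM--GM-plus-degree argument is the paper's identity $a-c=\frac{\gamma_{jk}}{2}\bigl(\sqrt{s_j}-\sqrt{s_k}\bigr)^2+\frac12\bigl(\sum_{\ell\neq k}\gamma_{j\ell}s_\ell+\sum_{\ell\neq j}\gamma_{k\ell}s_\ell\bigr)$ read term by term, and your maximum-principle treatment of $\cH_0$ and the explicit kernel element $E_{12}+E_{21}$ when $s_1=s_2$ spell out points the paper's proof leaves implicit.
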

\begin{proof}
Suppose that $X_0\in M_n(\bC)$ with $\cL(X_0)=0$.
For any $X\in M_n(\bC)$, the gradient form is
\begin{align*}
    \Gamma(X)=X^*\cL(X)+\cL(X)^*X-\cL(X^*X)=\sum_{e_{jk}\in E(\cG)} \gamma_{jk} [V_{jk}, X]^*[V_{jk}, X].
\end{align*}
Since $\cL(X_0)=0$ and $\rho$ is invariant,
\[
\Tr\!\left(D_\rho\Gamma(X_0)\right)=0.
\]
As $\Gamma(X_0)\geq0$ and $D_\rho>0$, it follows that $\Gamma(X_0)=0$.
Hence $[V_{jk},X_0]=0$ for every $e_{jk}\in E(\cG)$.
If $n=2$ and $\cG$ is connected, a direct computation using
\eqref{eq:lindbase1} and \eqref{eq:lindbase2} gives the eigenvalues
\[
0,\qquad\gamma_{12},\qquad
\gamma_{12}\left(\frac12\pm\sqrt{s_1s_2}\right).
\]
Since $s_1+s_2=1$, it follows that
\[
\ker\cL=\bC I
\quad\Longleftrightarrow\quad s_1\neq s_2.
\]

Suppose that $\cG$ is connected and $n \geq 3$.
We have that the eigenvalues
\begin{align*}
\frac{d_j+d_k}{2}-\mathbbm{1}_{e_{jk}\in E(\cG)} \gamma_{jk}\sqrt{s_js_k} 
=\frac{\gamma_{jk}}{2}(\sqrt{s_j}-\sqrt{s_k})^2 +\frac{1}{2}\left(\sum_{e_{\ell j}, \ell\neq k}\gamma_{j\ell}s_{\ell} + \sum_{e_{\ell k}, \ell\neq j}\gamma_{k\ell}s_{\ell}\right)\neq 0.
\end{align*}
This shows that $\ker\cL=\bC$ and the semigroup is primitive.
\end{proof}

\begin{lemma}\label{lem:graph-complete-bkm}
Suppose that $\cG$ is connected and $n\geq3$.
Let $m\geq1$, let $\sigma$ be a faithful state on $M_m(\bC)$,
and set $\eta=\rho\otimes\sigma$.
Then
\[
\|\cL\otimes\Id_m\|_{\BKM,\eta}\leq2d_{\max}.
\]
Define
\[
\cS_{\rho\otimes\Id_m}
=\left\{
X=X^*\in M_n(\bC)\otimes M_m(\bC):
(\rho\otimes\Id_m)(X)=0,\ \|X\|=1
\right\}.
\]
Here $Q$ is defined as in \eqref{eq:keybd}, with $\cL$ and $D_\rho$
replaced by $\cL\otimes\Id_m$ and $D_\eta=D_\rho\otimes D_\sigma$,
respectively. We have
\begin{align*}
\inf_{X\in\cS_{\rho\otimes\Id_m}}Q(X)
&\geq\min\left\{\varphi_{\cG},
\min_{j<k}\left\{
\frac{d_j+d_k}{2}
-\frac12\mathbbm{1}_{e_{jk}\in E(\cG)}\gamma_{jk}(s_j+s_k)
\right\}\right\}\\
&\geq\frac12\gamma_{\min}s_{\min}^2\lambda_{\cG}.
\end{align*}
\end{lemma}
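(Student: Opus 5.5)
The plan is to reduce $\cL\otimes\Id_m$ to small invariant blocks that are mutually orthogonal in the BKM inner product of $\eta$, in the same way as the proof of $\|\cL\|_{\BKM,\rho}\leq 2d_{\max}$ and of Lemma~\ref{lem:lowerbd1}, and then to estimate each block. First I change basis in $M_m(\bC)$ so that $D_\sigma=\sum_a\sigma_aF_{aa}$ is diagonal. This is harmless: $\cL\otimes\Id_m$ commutes with conjugation by $I\otimes U$, and both the constraint $(\rho\otimes\Id_m)(X)=0$ and the operator norm are invariant under that conjugation. In the new basis every $E_{jk}\otimes F_{ab}$ is an eigenvector of $\K_{D_\eta}$, with eigenvalue $w=\Lambda(s_j\sigma_a,s_k\sigma_b)$, where $\Lambda$ is the logarithmic mean from \eqref{eq:kd}. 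Hence the decomposition
\[
M_n\otimes M_m=(\cH_0\otimes M_m)\oplus\bigoplus_{j<k}\bigoplus_{a,b}\spanop\{E_{jk}\otimes F_{ab},\,E_{kj}\otimes F_{ab}\}
\]
is BKM-orthogonal. By \eqref{eq:lindbase1}--\eqref{eq:lindbase2}, each summand is invariant under $\cL\otimes\Id_m$. It therefore suffices to bound, block by block, both the Rayleigh quotient $\Re\langle Y,(\cL\otimes\Id_m)Y\rangle_\eta/\|Y\|_\eta^2$ from below and the norm from above.

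\emph{Diagonal block.} Write $Y=\sum_{j,a,b}t_j^{ab}E_{jj}\otimes F_{ab}$. The BKM weight of $E_{jj}\otimes F_{ab}$ is $s_j\Lambda(\sigma_a,\sigma_b)$, so the factor $\Lambda(\sigma_a,\sigma_b)$ separates out. For each fixed pair $(a,b)$, $\cL$ acts on the vector $\vec t^{\,ab}=(t_j^{ab})_j$ exactly as it acts on $\cH_0$, which is GNS-self-adjoint there. The constraint $(\rho\otimes\Id_m)(Y)=0$ gives $\sum_js_jt^{ab}_j=0$ for every $(a,b)$. The quadratic form therefore splits as a $\Lambda(\sigma_a,\sigma_b)$-weighted sum of copies of the scalar form defining $\varphi_\cG$. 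This yields the lower bound $\varphi_\cG$, and the same upper bound $2d_{\max}$ on the norm as in the earlier lemma.

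\emph{Off-diagonal $2\times2$ blocks.} Put $Y=y_1E_{jk}\otimes F_{ab}+y_2E_{kj}\otimes F_{ab}$ with weights $w_1=\Lambda(s_j\sigma_a,s_k\sigma_b)$ and $w_2=\Lambda(s_k\sigma_a,s_j\sigma_b)$, and let $c=\mathbbm{1}_{e_{jk}\in E(\cG)}\gamma_{jk}\sqrt{s_js_k}$. Since $E_{jk}\otimes F_{ab}\perp E_{kj}\otimes F_{ab}$, we get
\[
\Re\langle Y,(\cL\otimes\Id_m)Y\rangle_\eta=\tfrac{d_j+d_k}{2}\bigl(w_1|y_1|^2+w_2|y_2|^2\bigr)-c\,(w_1+w_2)\Re(\bar y_1y_2).
\]
By AM--GM it suffices to prove the key inequality
\[
(w_1+w_2)^2\,s_js_k\leq\Bigl(\tfrac{s_j+s_k}{2}\Bigr)^2\cdot 4w_1w_2,
\qquad\text{i.e.}\qquad
\frac{w_1w_2}{(w_1+w_2)^2}\geq\frac{s_js_k}{(s_j+s_k)^2}.
\]
Granting it, the cross term is at most $\tfrac12\gamma_{jk}(s_j+s_k)(w_1|y_1|^2+w_2|y_2|^2)$, which gives the stated lower bound $\frac{d_j+d_k}{2}-\frac12\mathbbm{1}\gamma_{jk}(s_j+s_k)$. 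The same estimate shows that the block operator is dominated by $\frac{d_j+d_k}{2}+\frac12\gamma_{jk}(s_j+s_k)\leq 2d_{\max}$, because $\gamma_{jk}s_k\leq d_j$ and $\gamma_{jk}s_j\leq d_k$. For the norm, which is not a Rayleigh quotient because $\cL$ is not BKM-self-adjoint, I would conjugate the $2\times2$ matrix by $\diag(\sqrt{w_1},\sqrt{w_2})$. Its off-diagonal entries become $c\sqrt{w_2/w_1}$ and $c\sqrt{w_1/w_2}$, and I would bound its norm by $\frac{d_j+d_k}{2}+\frac c2\bigl(\sqrt{w_1/w_2}+\sqrt{w_2/w_1}\bigr)$, which the key inequality again controls.

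The key inequality is the main obstacle. I would first check it at the two extremes. At $\sigma_a=\sigma_b$ we have $w_1=w_2$ and it reads $4s_js_k\leq(s_j+s_k)^2$. As $\sigma_b\to0$ we have $w_i\approx \sigma_a s_\bullet/\log(\cdot)$, giving $w_1/w_2\to s_j/s_k$ and equality. This suggests that the ratio $w_1/w_2$ always lies between $s_k/s_j$ and $s_j/s_k$. I would prove that via the representation $\Lambda(A,B)=\int_0^1A^uB^{1-u}\,du$, which gives $w_1=\int_0^1 (s_j\sigma_a)^u(s_k\sigma_b)^{1-u}\,du$ and $w_2=\int_0^1(s_k\sigma_a)^u(s_j\sigma_b)^{1-u}\,du$, together with the monotonicity of $\Lambda$ in each argument. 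Since $x\mapsto x/(1+x)^2$ is decreasing on $[1,\infty)$, this ratio bound yields the key inequality. Finally, the bound $\geq\frac12\gamma_{\min}s_{\min}^2\lambda_\cG$ follows exactly as in the proof of Lemma~\ref{lem:lowerbd1}.
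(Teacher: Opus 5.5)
Your block decomposition, diagonal-block argument and lower bound on $Q$ are correct and follow the paper's proof. Your AM--GM reduction to $w_1w_2/(w_1+w_2)^2\ge s_js_k/(s_j+s_k)^2$ is equivalent to the paper's computation of the smallest eigenvalue of the Hermitian part of the $2\times2$ block. Your ratio bound $\min\{s_j/s_k,s_k/s_j\}\le w_1/w_2\le\max\{s_j/s_k,s_k/s_j\}$ is the one the paper uses, and monotonicity plus homogeneity does give it: for $s_j\ge s_k$, $w_1\le s_j\Lambda(\sigma_a,\sigma_b)\le (s_j/s_k)\,w_2$.

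\textbf{The gap: the operator-norm bound on the off-diagonal blocks.} In the BKM-orthonormal basis the block is
\[
A=\begin{pmatrix}a_0&-cu\\-cu^{-1}&a_0\end{pmatrix},\qquad a_0=\tfrac{d_j+d_k}{2},\quad u=\sqrt{w_1/w_2}.
\]
The number $a_0+\frac c2(u+u^{-1})$ that you propose as an upper bound for $\|A\|$ is the largest eigenvalue of $(A+A^*)/2$. It controls the numerical range of $A$, so it is a \emph{lower} bound for $\|A\|$. With $v=\frac12(u+u^{-1})$ one computes
\[
\|A\|^2=a_0^2+c^2(2v^2-1)+2cv\sqrt{a_0^2+c^2(v^2-1)}>(a_0+cv)^2
\]
whenever $c>0$ and $u\neq1$. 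For instance, $a_0=1$, $c=\frac12$, $u=2$ gives $\|A\|\approx1.69>1.625$.

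This case genuinely occurs once $m\ge2$. On an edge, $w_1\neq w_2$ exactly when $s_j\neq s_k$ and $\sigma_a\neq\sigma_b$. Unlike the case $m=1$, the block is then no longer normal. The same caveat applies to your sentence that the block operator is dominated by $a_0+\frac12\gamma_{jk}(s_j+s_k)$: that bounds its Hermitian part, not its norm.

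\textbf{The repair} is the paper's estimate. Write $A=a_0I+N$ with $N^*N=\operatorname{diag}(c^2u^{-2},c^2u^2)$, so that $\|A\|\le a_0+c\max\{u,u^{-1}\}$. Then the ratio bound itself gives
\[
c\max\{u,u^{-1}\}\le\gamma_{jk}\max\{s_j,s_k\}\le\max\{d_j,d_k\},
\]
hence $\|A\|\le 2d_{\max}$. Note that this step needs the ratio bound, not your symmetric key inequality, which only controls $u+u^{-1}$.
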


\begin{proof}
Choose matrix units $\{F_{ab}\}_{a,b=1}^m$ such that
$D_\sigma=\sum_{a=1}^m t_aF_{aa}$, where $t_a>0$.
The matrix units $E_{jk}\otimes F_{ab}$ are orthogonal for the
BKM inner product induced by $\eta$, with squared norms
\[
p_{j,k,a,b}
=\int_0^1(s_jt_a)^u(s_kt_b)^{1-u}\,du
=\begin{cases}
\displaystyle\frac{s_jt_a-s_kt_b}{\log(s_jt_a)-\log(s_kt_b)},
&s_jt_a\neq s_kt_b,\\[4pt]
s_jt_a,&s_jt_a=s_kt_b.
\end{cases}
\]
By \eqref{eq:lindbase1} and \eqref{eq:lindbase2}, the mutually
orthogonal subspaces
\[
\cH_{0,ab}=\spanop\{E_{jj}\otimes F_{ab}:1\leq j\leq n\},
\qquad
\cH_{jk,ab}=\spanop\{E_{jk}\otimes F_{ab},E_{kj}\otimes F_{ab}\}
\]
are invariant under $\cL\otimes\Id_m$, where $j<k$ and
$1\leq a,b\leq m$.

First consider $\cH_{0,ab}$. Set
$\ell_{ab}=\int_0^1t_a^ut_b^{1-u}\,du>0$.
Since $p_{j,j,a,b}=s_j\ell_{ab}$, the BKM inner product on this
block is a positive scalar multiple of the weighted inner product
on $\cH_0$. Thus the restriction of $\cL\otimes\Id_m$ is
self-adjoint on this block and has norm at most $2d_{\max}$,
by the same estimate as for $\cL$ on $\cH_0$.
For $Z=\sum_j z_jE_{jj}\otimes F_{ab}$ with $\sum_j s_jz_j=0$,
we also have
\[
\operatorname{Re}\langle Z,(\cL\otimes\Id_m)(Z)\rangle_{\BKM,\eta}
=\ell_{ab}\sum_{e_{jk}\in E(\cG)}
\gamma_{jk}s_js_k|z_j-z_k|^2
\geq\varphi_{\cG}\|Z\|_{\BKM,\eta}^2.
\]

Next fix an off-diagonal block $\cH_{jk,ab}$ and write
\[
a_0=\frac{d_j+d_k}{2},\qquad
b_0=\mathbbm{1}_{e_{jk}\in E(\cG)}\gamma_{jk}\sqrt{s_js_k},
\qquad
u_0=\sqrt{\frac{p_{j,k,a,b}}{p_{k,j,a,b}}}.
\]
In the BKM-orthonormal basis obtained by normalizing
$E_{jk}\otimes F_{ab}$ and $E_{kj}\otimes F_{ab}$,
the matrix of the restriction is
\[
A=\begin{pmatrix}a_0&-b_0u_0\\-b_0u_0^{-1}&a_0\end{pmatrix}.
\]
The smallest eigenvalue of its Hermitian part $(A+A^*)/2$ is
\begin{equation}\label{eq:bd34}
\frac{d_j+d_k}{2}
-\frac12\mathbbm{1}_{e_{jk}\in E(\cG)}\gamma_{jk}\sqrt{s_js_k}
\left(
\sqrt{\frac{p_{j,k,a,b}}{p_{k,j,a,b}}}
+\sqrt{\frac{p_{k,j,a,b}}{p_{j,k,a,b}}}
\right),
\end{equation}
and the largest eigenvalue of the same Hermitian part is
\begin{equation}\label{eq:bd35}
\frac{d_j+d_k}{2}
+\frac12\mathbbm{1}_{e_{jk}\in E(\cG)}\gamma_{jk}\sqrt{s_js_k}
\left(
\sqrt{\frac{p_{j,k,a,b}}{p_{k,j,a,b}}}
+\sqrt{\frac{p_{k,j,a,b}}{p_{j,k,a,b}}}
\right).
\end{equation}
The integral formula for $p_{j,k,a,b}$ gives
\[
\min\left\{\frac{s_j}{s_k},\frac{s_k}{s_j}\right\}
\leq\frac{p_{j,k,a,b}}{p_{k,j,a,b}}
\leq\max\left\{\frac{s_j}{s_k},\frac{s_k}{s_j}\right\}.
\]
Consequently,
$u_0+u_0^{-1}\leq\sqrt{s_j/s_k}+\sqrt{s_k/s_j}$.
The smallest eigenvalue in \eqref{eq:bd34} is therefore at least
\[
\frac{d_j+d_k}{2}
-\frac12\mathbbm{1}_{e_{jk}\in E(\cG)}\gamma_{jk}(s_j+s_k).
\]
For the operator norm, we use the matrix $A$ itself:
\[
\begin{aligned}
\|A\|_{2\to2}
&\leq a_0+b_0\max\{u_0,u_0^{-1}\}\\
&\leq\frac{d_j+d_k}{2}
+\mathbbm{1}_{e_{jk}\in E(\cG)}\gamma_{jk}\max\{s_j,s_k\}\\
&\leq2d_{\max}.
\end{aligned}
\]
Combining this with the diagonal-block estimate proves
$\|\cL\otimes\Id_m\|_{\BKM,\eta}\leq2d_{\max}$.

Finally, if $(\rho\otimes\Id_m)(X)=0$, the coefficients of each
diagonal block of $X$ satisfy $\sum_j s_jz_{j,ab}=0$.
Thus the diagonal-block lower bound applies to every such component;
the off-diagonal components satisfy the bound from \eqref{eq:bd34}.
By orthogonality, these estimates give the first lower bound for $Q$.
Since $\cG$ is connected and $n\geq3$, for every $j<k$ we have
\[
\begin{aligned}
\frac{d_j+d_k}{2}
-\frac12\mathbbm{1}_{e_{jk}\in E(\cG)}\gamma_{jk}(s_j+s_k)
&=\frac12\left(
\sum_{\ell\neq j,k}\gamma_{j\ell}s_\ell
+\sum_{\ell\neq j,k}\gamma_{k\ell}s_\ell
\right)\\
&\geq\frac12\gamma_{\min}s_{\min}.
\end{aligned}
\]
Together with $\varphi_{\cG}\geq\gamma_{\min}s_{\min}^2\lambda_{\cG}$
and $s_{\min}\lambda_{\cG}\leq1$, this proves the second lower bound.
\end{proof}

\begin{theorem}\label{thm:mlsigraph}
Suppose that $\cG$ is connected and $n \geq 3$.
We have that 
\begin{align*}
  \frac{\gamma_{\min}^2 s_{\min}^5\lambda_{\cG}^2}{112d_{\max}}  \leq \alpha_{MLSI} \leq \min\left\{\varphi_{\cG}, \min_{j<k}\left\{ \frac{d_j+d_k}{2}-\mathbbm{1}_{e_{jk}\in E(\cG)} \gamma_{jk}\sqrt{s_js_k}\right\}\right\},
\end{align*}
and 
\begin{align*}
 \frac{\gamma_{\min}^2 s_{\min}^5\lambda_{\cG}^2}{112n d_{\max}} \leq \alpha_{CMLSI}\leq \min\left\{\varphi_{\cG}, \min_{j<k}\left\{ \frac{d_j+d_k}{2}-\mathbbm{1}_{e_{jk}\in E(\cG)} \gamma_{jk}\sqrt{s_js_k}\right\}\right\}.
\end{align*}
If $n=2$, $\cG$ is connected, and $s_1<s_2$, with edge weight
$\gamma_{12}=\gamma$, we have $\alpha_{CMLSI}=0$ and
\begin{align*}
\alpha_{MLSI}\geq \frac{\gamma s_1}{14s_2}\left(\frac{1}{2}-\sqrt{s_1s_2}\right)^2.
\end{align*}
\end{theorem}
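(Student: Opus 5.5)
The plan is to feed the spectral data from Lemma~\ref{lem:lowerbd1} and Lemma~\ref{lem:graph-complete-bkm} into the quantitative bound obtained in the proof of Theorem~\ref{thm:mlsiprim}, and to get the upper bounds from the second-order expansion in Lemma~\ref{lem:tech2}.

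\textbf{Lower bound for MLSI ($n\geq3$).} Inequality \eqref{eq:mlsiest0} gives
\[
\alpha_{MLSI}\geq \frac{q^2}{14p(\kappa-1)},
\qquad q=\inf_{X\in\cS_\rho}Q(X),\quad p=\|\cL\|_{\BKM,\rho},\quad \kappa=s_{\min}^{-1}.
\]
We insert $q\geq \frac12\gamma_{\min}s_{\min}^2\lambda_\cG$ (Lemma~\ref{lem:lowerbd1}), $p\leq 2d_{\max}$ (the BKM norm lemma), and $\kappa-1\leq s_{\min}^{-1}$. This gives
\[
\alpha_{MLSI}\geq \frac{\gamma_{\min}^2 s_{\min}^4\lambda_\cG^2/4}{28 d_{\max}s_{\min}^{-1}},
\]
which is the stated $\gamma_{\min}^2s_{\min}^5\lambda_\cG^2/(112d_{\max})$. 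The proof of Theorem~\ref{thm:mlsiprim} assumes $\kappa\geq2$; this holds because $s_{\min}\leq 1/n$.

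\textbf{Lower bound for CMLSI ($n\geq3$).} We rerun the proof of Theorem~\ref{thm:mlsiprim} for $\cL\otimes\Id_m$ and a density $D$ on $M_n\otimes M_m$, with reference state $D_\rho\otimes D_R$, where $D_R=\Tr_{M_n}(D)$. After a perturbation we may take $D_R$ faithful, so we set $\sigma$ with $D_\sigma=D_R$ and $\eta=\rho\otimes\sigma$. The local argument then uses $\cS_{\rho\otimes\Id_m}$ in place of $\cS_\rho$; this is legitimate because $D-D_\eta$ has vanishing partial trace. The relevant constants are the ones in Lemma~\ref{lem:graph-complete-bkm}: $q\geq\frac12\gamma_{\min}s_{\min}^2\lambda_\cG$ and $p\leq 2d_{\max}$, both uniform in $m$ and $\sigma$.

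The global step needs a constant $\kappa$ with $D\leq\kappa\, D_\rho\otimes D_R$. The bound $D\leq n\,(I\otimes D_R)$ (the standard partial-trace inequality, via the pinching/Pimsner--Popa estimate) together with $I\leq s_{\min}^{-1}D_\rho$ gives $\kappa=n/s_{\min}$. This is exactly the extra factor $n$ in the stated bound.

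The other ingredients carry over: convexity of the Fisher information, and Lemma~2.2 of \cite{GaoRou22} applied with the fixed reference $D_\eta$. We apply them along the segment $(1-t_0)D_\eta+t_0D$, which keeps the marginal fixed. I expect the main obstacle to be checking that the operator-interval estimates for $\K^{-1}$ and the condition $\mathrm{Re}\langle Y,(\cL\otimes\Id)Y\rangle\geq q\|Y\|^2$ hold on the correct subspace, since $\cL\otimes\Id_m$ is not self-adjoint in the BKM inner product for $\eta$. Lemma~\ref{lem:graph-complete-bkm} already supplies the Hermitian-part bound, so this should reduce to bookkeeping.

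\textbf{Upper bounds.} By Lemma~\ref{lem:tech2}, with self-adjoint $X$ supported in the invariant blocks $\cH_0$ or $\cH_{jk}$, we get $\alpha_{MLSI}\leq Q(X)$ for each such $X$. KMS symmetry together with the reality of the $2\times2$ blocks lets us take self-adjoint eigenvectors: $E_{jk}\pm E_{kj}$ up to scaling, and real diagonal eigenvectors in $\cH_0$. Their Rayleigh quotients equal $\varphi_\cG$ and the block eigenvalues $\frac{d_j+d_k}{2}-\mathbbm{1}_{e_{jk}\in E(\cG)}\gamma_{jk}\sqrt{s_js_k}$, which yields the stated minimum. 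Since $\alpha_{CMLSI}\leq\alpha_{MLSI}$, the same bound holds for CMLSI.

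\textbf{The case $n=2$.} The single noise operator $V_{12}$ is non-scalar, so Corollary~\ref{cor:single-noise-no-cmlsi} gives $\alpha_{CMLSI}=0$. For MLSI we use the exact infimum: the eigenvalues are $\gamma$ and $\gamma(\frac12\pm\sqrt{s_1s_2})$, so $q=\gamma(\frac12-\sqrt{s_1s_2})$. The norm satisfies $p\leq\gamma$, since both $\gamma$ and $\gamma(\frac12+\sqrt{s_1s_2})\leq\gamma$. Finally $\kappa-1=1/s_1-1=s_2/s_1$. Substituting into $q^2/(14p(\kappa-1))$ gives $\frac{\gamma s_1}{14s_2}(\frac12-\sqrt{s_1s_2})^2$.
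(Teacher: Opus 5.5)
Your proposal is correct and follows the paper's proof essentially step for step. You substitute Lemma~\ref{lem:lowerbd1}, the BKM-norm bound and $\kappa=s_{\min}^{-1}$ (resp.\ Lemma~\ref{lem:graph-complete-bkm} and $\kappa=n s_{\min}^{-1}$) into \eqref{eq:mlsiest0}, use the second-order expansion of Lemma~\ref{lem:tech2} for the upper bounds, and use Corollary~\ref{cor:single-noise-no-cmlsi} together with the explicit $2\times 2$ spectral data for $n=2$. The one step you leave implicit, which the paper writes out, is the identity $\operatorname{Tr}_{M_n}\bigl(\K_{D_\eta}(X)\bigr)=\K_{D_R}\bigl((\rho\otimes\Id_m)(X)\bigr)$; it is what turns ``$D-D_\eta$ has vanishing partial trace'' into the statement that $\K_{D_\eta}^{-1}(D-D_\eta)$ lies in the subspace where Lemma~\ref{lem:graph-complete-bkm} applies.
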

\begin{proof}
By Equation \eqref{eq:mlsiest0} in the proof of Theorem \ref{thm:mlsiprim}, we have that 
\begin{align*}
    \alpha_{MLSI} \geq&  \frac{\left(\inf_{X\in \cS_\rho}Q(X)\right)^2}{14(\|D_\rho^{-1}\|-1)\|\cL\|_{\BKM, \rho }} \\
 \geq &\frac{\gamma_{\min}^2 s_{\min}^5\lambda_{\cG}^2}{112 d_{\max}},
\end{align*}
where $\kappa=\|D_\rho^{-1}\|=s_{\min}^{-1}$ is the inverse of the minimal eigenvalue of $D_\rho$.

Now we consider the completely modified logarithmic Sobolev inequality.
For a faithful density matrix $D\in M_n(\bC)\otimes M_m(\bC)$,
let $D_R=\operatorname{Tr}_{M_n}(D)$ and let $\sigma$ be the
faithful state with density $D_R$. Set $\eta=\rho\otimes\sigma$,
so that $D_\eta=D_\rho\otimes D_R$. Then
\[
D\leq\left(\sum_{j=1}^n s_j^{-1}\right)D_\eta
\leq n s_{\min}^{-1}D_\eta.
\]
For every $X\in M_n(\bC)\otimes M_m(\bC)$, we have
\[
\operatorname{Tr}_{M_n}\!\left(\K_{D_\eta}(X)\right)
=\K_{D_R}\!\left((\rho\otimes\Id_m)(X)\right).
\]
Since $\K_{D_R}$ is invertible, this identity shows that
$X=\K_{D_\eta}^{-1}(D-D_\eta)$ satisfies
$(\rho\otimes\Id_m)(X)=0$.
We may therefore apply the argument of Theorem~\ref{thm:mlsiprim}
on the affine space of density matrices with fixed marginal $D_R$,
using Lemma~\ref{lem:graph-complete-bkm} and
$\kappa=n s_{\min}^{-1}$.
This proves the stated lower bound for $\alpha_{CMLSI}$,
uniformly in $m$ and $D_R$.
For a general density matrix, restrict the ancillary space to
$\operatorname{supp}D_R$ and approximate $D$ by
$(1-\varepsilon)D+\varepsilon D_\eta$;
the entropy contraction inequality passes to the limit.

By Lemma~\ref{lem:tech2}, for every $X\in\cS_\rho$ and
$D_\varepsilon=D_\rho+\varepsilon\K_{D_\rho}(X)$, we have
\[
\alpha_{MLSI}
\leq\lim_{\varepsilon\downarrow0}
\frac{\cI_{\cL}(D_\varepsilon)}
     {2H(D_\varepsilon\|D_\rho)}
=Q(X).
\]
Taking the infimum and applying Lemma~\ref{lem:lowerbd1}
gives the stated upper bound for $\alpha_{MLSI}$.
The same upper bound for $\alpha_{CMLSI}$ follows from
$\alpha_{CMLSI}\leq\alpha_{MLSI}$.

If $n=2$, we have that 
\begin{align*}
 \kappa-1=s_1^{-1}-1=\frac{s_2}{s_1}, \quad \inf_{X\in \cS_\rho} Q(X) =\gamma \left(\frac{1}{2}-\sqrt{s_1s_2}\right), \quad \|\cL\|_{\BKM, \rho}=\gamma.
 \end{align*}
Substituting these values into the bound from
Theorem~\ref{thm:mlsiprim} gives
\[
\alpha_{MLSI}
\geq\frac{\left(\inf_{X\in\cS_\rho}Q(X)\right)^2}
{14\|\cL\|_{\BKM,\rho}(\kappa-1)}
=\frac{\gamma s_1}{14s_2}
\left(\frac12-\sqrt{s_1s_2}\right)^2.
\]
For the complete setting, Corollary~\ref{cor:single-noise-no-cmlsi}
applies to the single noise operator $\sqrt{\gamma}V_{12}$ and gives
$\alpha_{CMLSI}=0$.
\end{proof}

\section{CMLSI for Bimodule KMS-Symmetric QMS}

In this section, we show that the primitive bimodule KMS-symmetric
quantum Markov semigroups considered in Section 7.2 of \cite{JWW26}
satisfy CMLSI, even when they are not KMS-symmetric with respect
to their invariant states.

Suppose that $P_1,\ldots,P_n,Q_1,\ldots,Q_n$ are unitary generators subject to
\begin{equation}\label{eq:CAR-PQ}
 \{P_j,P_k\}=\{Q_j,Q_k\}=2\delta_{jk}I, \qquad \{Q_j,P_k\}=0,\qquad 1\leq j,k\leq n.
\end{equation}
Then the C$^*$ algebra $\mathcal{M}=C^*(P_1,\ldots,P_n,Q_1,\ldots,Q_n)\cong M_{2^n}(\mathbb{C})=M_d(\bC)$.
Let $\tau=d^{-1}\Tr$ denote the normalized trace on $\mathcal M$.
For example, using the usual Pauli matrices, one may take
\[
 Q_j=\sigma_z^{\otimes(j-1)}\otimes\sigma_x\otimes I_2^{\otimes(n-j)},
 \qquad
 P_j=\sigma_z^{\otimes(j-1)}\otimes\sigma_y\otimes I_2^{\otimes(n-j)},
\]
where $\sigma_x, \sigma_y, \sigma_z$ are Pauli matrices in $M_2(\bC)$.

As in \cite[Section~7.1]{JWW26}, define
\begin{equation}\label{eq:v-def}
 w=i^n\prod_{j=1}^n Q_jP_j,\qquad
 v_j=\frac1{\sqrt2}w(Q_j+iP_j).
\end{equation}
Then $w=w^*$, $w^2=I$, $wv_jw=-v_j$, and
\begin{equation}\label{eq:v-CAR}
 \{v_j,v_k\}=0,\qquad
 \{v_j,v_k^*\}=2\delta_{jk}I,
 \qquad \tau(v_j^*v_k)=\delta_{jk},\quad
 \tau(v_jv_k)=0.
\end{equation}

By taking real $t_1,\ldots,t_n$ and real positive definite matrices
\begin{equation}\label{eq:strict-Gamma}
 \Gamma_+=(\gamma^+_{jk})_{j,k=1}^n>0,
 \qquad \Gamma_-=(\gamma^-_{jk})_{j,k=1}^n>0,
\end{equation}
we define a linear map $\cL_0$ on $\cM$ as follows:
\begin{equation}\label{eq:L0}
\begin{split}
 \cL_0(x)={}&\frac12\sum_{j,k=1}^n e^{t_j+t_k}(\gamma^+_{jk}+\gamma^-_{jk})v_k^*xv_j
       +\frac12\sum_{j,k=1}^n e^{-t_j-t_k}(\gamma^+_{jk}+\gamma^-_{jk})v_k xv_j^*\\
 &+\frac12\sum_{j,k=1}^n e^{t_j-t_k}(\gamma^-_{jk}-\gamma^+_{jk})v_k xv_j
       +\frac12\sum_{j,k=1}^n e^{t_k-t_j}(\gamma^-_{jk}-\gamma^+_{jk})v_k^*xv_j^*.
\end{split}
\end{equation}
Note that $\cL_0$ is a completely positive map on $\cM$.
Let $\displaystyle \mathbf{y}=\frac12\cL_0(I)$.
The Lindbladian generator $\cL$ is given by
\begin{equation}\label{eq:L-def}
\cL(x)=\mathbf{y}x+x\mathbf{y}-\cL_0(x),\qquad x\in \cM, 
\end{equation}
and the associated semigroup is denoted by $\Phi_t=e^{-t\cL}$.

Let $R_\pm=\Gamma_\pm^{1/2}$ be the square root of $\Gamma_{\pm}$ and 
\begin{equation}\label{eq:jumps}
\begin{split}
 K_{\ell,+}&=\frac1{\sqrt2}\sum_{j=1}^n(R_+)_{\ell j}
                       (e^{t_j}v_j-e^{-t_j}v_j^*),\\
 K_{\ell,-}&=\frac1{\sqrt2}\sum_{j=1}^n(R_-)_{\ell j}
                       (e^{t_j}v_j+e^{-t_j}v_j^*),\qquad 
\end{split}
\end{equation}
for $1\leq\ell\leq n$, where $(X)_{jk}$ denotes the $(j,k)$-entry of a matrix $X$.

\begin{lemma}\label{lem:GKSL}
For any $x\in \cM$, we have that
\begin{equation}\label{eq:GKSL}
\begin{split}
 \cL_0(x)&=\sum_{\ell,\varepsilon}K_{\ell,\varepsilon}^*xK_{\ell,\varepsilon},\\
\cL(x)&=
   \sum_{\ell,\varepsilon} \frac12\{K_{\ell,\varepsilon}^*K_{\ell,\varepsilon},x\}-\sum_{\ell,\varepsilon}
 K_{\ell,\varepsilon}^*x K_{\ell,\varepsilon}
\end{split}
\end{equation}
where $\varepsilon\in \{+, -\}$ and $1\leq \ell\leq n$.
Moreover,
\begin{equation}\label{eq:noise-span}
 \cV:=\spanop_\bC\{K_{\ell,\varepsilon}\}
 =\spanop_\bC\{v_j,v_j^*:1\leq j\leq n\},
 \qquad C^*(\{K_{\ell,\varepsilon}\})=M_d(\bC).
\end{equation}
\end{lemma}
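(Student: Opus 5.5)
The plan is to expand $\sum_{\ell,\varepsilon}K_{\ell,\varepsilon}^*xK_{\ell,\varepsilon}$ directly and match it with \eqref{eq:L0}. Write $a_j=e^{t_j}v_j$ and $b_j=e^{-t_j}v_j^*$, so that
\[
K_{\ell,\pm}=\frac{1}{\sqrt2}\sum_j (R_\pm)_{\ell j}(a_j\mp b_j).
\]
Since $R_\pm$ is real symmetric with $R_\pm^2=\Gamma_\pm$, we have $\sum_\ell (R_\pm)_{\ell k}(R_\pm)_{\ell j}=\gamma^\pm_{kj}=\gamma^\pm_{jk}$. Also $K^*$ carries coefficients $(R_\pm)_{\ell k}$, which are real because the $t_j$ are real and $R_\pm$ is real. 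Hence
\[
\sum_\ell K_{\ell,\pm}^*xK_{\ell,\pm}=\frac12\sum_{j,k}\gamma^\pm_{jk}(a_k\mp b_k)^*x(a_j\mp b_j).
\]
Adding the $+$ and $-$ contributions, the $a_k^*xa_j$ and $b_k^*xb_j$ terms acquire the coefficient $\gamma^+_{jk}+\gamma^-_{jk}$. These give $e^{t_j+t_k}v_k^*xv_j$ and $e^{-t_j-t_k}v_kxv_j^*$, which match the first line of \eqref{eq:L0}. The cross terms $a_k^*xb_j$ and $b_k^*xa_j$ acquire the coefficient $\gamma^-_{jk}-\gamma^+_{jk}$. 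They give $e^{t_k-t_j}v_k^*xv_j^*$ and $e^{t_j-t_k}v_kxv_j$, matching the second line. This proves the formula for $\cL_0$. The formula for $\cL$ then follows because $\mathbf y=\frac12\cL_0(I)=\frac12\sum K^*K$, so $\mathbf yx+x\mathbf y=\frac12\{\sum K^*K,x\}$.

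For the span identity \eqref{eq:noise-span}, the inclusion $\cV\subseteq\spanop\{v_j,v_j^*\}$ is immediate. For the reverse inclusion, note that $R_\pm$ is invertible because $\Gamma_\pm>0$. Applying $R_\pm^{-1}$, $\cV$ therefore contains $e^{t_j}v_j-e^{-t_j}v_j^*$ and $e^{t_j}v_j+e^{-t_j}v_j^*$ for each $j$. Adding and subtracting these, and dividing by $2e^{\pm t_j}$, recovers $v_j$ and $v_j^*$.

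For the generation statement, it suffices to show $C^*(v_1,\dots,v_n)=\cM$. From \eqref{eq:v-def}, $Q_j+iP_j=\sqrt2\,wv_j$ and $Q_j-iP_j=\sqrt2\,v_j^*w$, using $w=w^*$. I would first try to show that $w$ lies in $C^*(v_j)$. By the CAR relations \eqref{eq:v-CAR}, the elements $v_j$ form a standard fermionic system on $2^n$ dimensions. The number parity $\prod_j(I-2v_j^*v_j)$ (or $\prod_j(v_jv_j^*-v_j^*v_j)$) anticommutes with every $v_j$, exactly as $w$ does since $wv_jw=-v_j$.

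The anticommutation approach would not identify $w$ directly, so I would use a cleaner route. The CAR algebra generated by $n$ operators $v_j$ satisfying \eqref{eq:v-CAR} is simple and isomorphic to $M_{2^n}(\bC)$, via the Jordan--Wigner representation. Its image is a unital $*$-subalgebra of $\cM\cong M_{2^n}(\bC)$ of the same dimension $4^n$, so it equals $\cM$. Hence $C^*(\{K_{\ell,\varepsilon}\})=C^*(\{v_j,v_j^*\})=M_d(\bC)$.

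The main obstacle is keeping the coefficient bookkeeping in the expansion correct. That means tracking the index order $k$ versus $j$, the real symmetry of $\gamma^\pm$ used to swap indices, and the signs of the cross terms. The generation claim itself rests on the standard fact that CAR relations on $n$ modes generate a full $M_{2^n}$.
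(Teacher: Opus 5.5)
Your proposal is correct and follows the paper's route: the paper also gets the Kraus form of $\cL_0$, and hence of $\cL$ via $\mathbf{y}=\frac12\cL_0(I)$, by direct expansion using $R_\pm^2=\Gamma_\pm$, and deduces the span identity from the invertibility of $\Gamma_\pm$. Your argument via simplicity of the $n$-mode CAR algebra and a dimension count supplies the generation claim $C^*(\{v_j,v_j^*\})=M_d(\bC)$, which the paper leaves implicit. One small slip in your abandoned aside: with the normalization $\{v_j,v_j^*\}=2I$, the parity factor is $I-v_j^*v_j$, not $I-2v_j^*v_j$, but you do not use it.
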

\begin{proof}
Equations \eqref{eq:GKSL} follow from a direct matrix computation of $R_{\pm}^2=\Gamma_{\pm}$.
The invertibility of $\Gamma_{\pm}$ implies that \eqref{eq:noise-span} holds.
\end{proof}

Lemma \ref{lem:GKSL} ensures that the semigroup generated by $\cL$ in \eqref{eq:L-def} is a quantum Markov semigroup.
We shall show that the quantum Markov semigroup with generator $\cL$ described by \eqref{eq:L-def} is primitive.
\begin{lemma}\label{lem:primitive}
We have that $\ker \cL=\bC I$ and there is a unique invariant density $D_\rho>0$ for the semigroup.
Furthermore,
\begin{equation}\label{eq:limit}
 \lim_{t\to\infty}\|\Phi_t-\bE_\Phi\|=0.
\end{equation}
\end{lemma}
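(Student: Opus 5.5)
The argument follows the usual route for finite-dimensional generators with a faithful invariant state. We show in turn that $\ker\cL=\bC I$, that a faithful invariant density exists and is unique, and that $\Phi_t$ converges to $\bE_\Phi$ in norm.

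\emph{Step 1: the kernel.} Suppose $\cL(x_0)=0$. By \eqref{eq:GKSL}, the gradient form satisfies
\[
\cL(x^*)x+x^*\cL(x)-\cL(x^*x)=\sum_{\ell,\varepsilon}[K_{\ell,\varepsilon},x]^*[K_{\ell,\varepsilon},x].
\]
Since $\cM$ is finite-dimensional, the semigroup has an invariant state $\sigma$. Apply $\sigma$ to the identity with $x=x_0$. Provided $\sigma$ is faithful, we get $[K_{\ell,\varepsilon},x_0]=0$ for all $\ell,\varepsilon$. By \eqref{eq:noise-span}, $x_0$ then commutes with $C^*(\{K_{\ell,\varepsilon}\})=M_d(\bC)$, so $x_0\in\bC I$.

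\emph{Step 2: a faithful invariant state.} Faithfulness is the main obstacle; I would handle it with a support-projection argument. Take an invariant state $\sigma$, which exists as a Cesàro limit of $\Phi_t^*$ applied to any state. Let $p$ be the support projection of $D_\sigma$. A standard computation from $\cL^*(D_\sigma)=0$ shows that the support is invariant under every jump operator: $(1-p)K_{\ell,\varepsilon}p=0$, and also $(1-p)K_{\ell,\varepsilon}^*K_{\ell,\varepsilon}\,p$-compatibility via the effective Hamiltonian. Together these say $pM_dp$ is an invariant corner. Concretely, $\Tr((1-p)\cL^*(D_\sigma))=0$ gives $\sum\Tr((1-p)K D_\sigma K^*)=0$, hence $(1-p)Kp=0$.

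Now use that $\cV$ is closed under adjoints, since it is spanned by the $v_j,v_j^*$. So $(1-p)v_jp=0$ and $(1-p)v_j^*p=0$ for every $j$, which means $p$ commutes with all $v_j$ and $v_j^*$. These generate $M_d(\bC)$, so $p=I$ and $\sigma$ is faithful. I would double-check here that $(1-p)Kp=0$ for all $K\in\cV$ really follows from the nonnegative terms: the Gram matrices $\Gamma_\pm$ are invertible, so the $K_{\ell,\varepsilon}$ span $\cV$ and linearity applies.

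\emph{Step 3: uniqueness and convergence.} Uniqueness follows from Step 1: $\dim\ker\cL^*=\dim\ker\cL=1$. Call the unique invariant density $D_\rho$; it is faithful by Step 2. For \eqref{eq:limit}, I would use Lemma~\ref{lem:tech1} with $s=1$: $\Re\Tr(D_\rho x^*\cL(x))=\tfrac12\sum\Tr(D_\rho[K,x]^*[K,x])$. This is strictly positive for $x\notin\bC I$, by the same commutant argument as in Step 1.

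Hence any eigenvector $x$ of $\cL$ with eigenvalue $\lambda$ and $x\notin \bC I$ satisfies $\Re\lambda>0$. We can take such an $x$ with $\rho(x)=0$, because $\rho\circ\cL=0$ forces $\rho(x)=0$ whenever $\lambda\neq0$. Also, $0$ is a semisimple eigenvalue, since the semigroup is bounded (contractive in the GNS norm). Therefore, on the complement $\{x:\rho(x)=0\}$, the spectrum of $\cL$ lies in $\{\Re\lambda>0\}$. This gives exponential norm convergence $\Phi_t\to\bE_\Phi=\rho(\cdot)I$.
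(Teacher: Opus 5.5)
Your proposal is correct and follows essentially the same route as the paper. Both use a Ces\`aro-limit invariant density, then the support-projection computation $(1-p)Kp=0$ together with $\cV=\cV^*$ generating $M_d(\bC)$ to force faithfulness, then the carr\'e du champ identity against the faithful invariant state for $\ker\cL=\bC I$, and $\dim\ker\cL^*=\dim\ker\cL$ for uniqueness. Logically, your Step~2 should come before Step~1.

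The only real variation is the convergence step. You get $\Re\lambda>0$ for every non-scalar eigenvector directly from $2\Re\Tr(D_\rho x^*\cL(x))=\sum\Tr(D_\rho[K,x]^*[K,x])$. The paper instead takes $\Re\spec\cL\geq0$ and a semisimple peripheral spectrum from operator-norm contractivity, and separately rules out nonzero purely imaginary eigenvalues by the carr\'e du champ. Your version is slightly more direct.
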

\begin{proof}
Suppose that $D_0\geq0$ is a density matrix.
We obtain an invariant density by taking a cluster point of the averages
\[
 \frac1T\int_0^T\Phi_s^*(D_0)\,ds.
\]
Each average is a density matrix, and the set of density matrices is
compact in finite dimension, so a convergent subsequence exists as
$T\to\infty$.
Since
\[
 \cL^*\left(\frac1T\int_0^T\Phi_s^*(D_0)\,ds\right)
 =\frac{D_0-\Phi_T^*(D_0)}{T}\longrightarrow0
 \qquad(T\to\infty),
\]
the continuity of $\cL^*$ implies that every such limit lies in
$\ker\cL^*$ and is therefore invariant.
Let $D_\rho$ be any invariant density and $p$ its support projection.
For $\xi\in\ker D_\rho$, the Schr\"odinger form of
\eqref{eq:GKSL} yields
\[
 0=\ip{\xi}{(-\cL^*)(D_\rho)\xi}
  =\sum_{\ell,\varepsilon}
       \|D_\rho^{1/2}K_{\ell,\varepsilon}^*\xi\|^2.
\]
Thus $(I-p)K_{\ell,\varepsilon}p=0$. By
\eqref{eq:noise-span}, $p\bC^d$ is invariant under every $v_j$ and
$v_j^*$. These operators generate $M_d$, so $p=I$. Every invariant
density is therefore faithful.

A direct expansion gives
\begin{equation}\label{eq:carre}
\Gamma(D, D)= -\cL(D^*D)+\cL(D^*)D+D^*\cL(D)
  =\sum_{\ell,\varepsilon}[K_{\ell,\varepsilon},D]^*
                         [K_{\ell,\varepsilon},D].
\end{equation}
If $\cL(D)=0$, apply $\Tr(D_\rho\,\cdot)$ to
\eqref{eq:carre}. Faithfulness implies that $D$ commutes with all the
noise operators $K_{\ell, \varepsilon}$.
Thus $D$ commutes with all of $M_d(\bC)$, and hence $D\in\bC I$. 
Since $\dim\ker \cL^* =\dim\ker \cL=1$, the invariant density $D_\rho$ is unique.

The same argument excludes nonzero imaginary eigenvalues. Namely, if
$\cL(D)=i\omega D$ with $\omega\in\bR$, the last two terms on the left of
\eqref{eq:carre} cancel. Invariance again implies that all commutators
vanish. Hence $D$ is scalar and $\omega=0$ whenever $D\ne0$.
The maps $\Phi_t$ are contractions in operator norm, so
$\Re\spec \cL\geq0$, and every eigenvalue on the imaginary axis is
semisimple: a nontrivial Jordan block would give polynomial growth of
$\Phi_t$. All other eigenvalues have strictly positive real parts.
The finite-dimensional Jordan decomposition therefore gives norm
convergence to a projection onto $\bC I$. Invariance of $D_\rho$
identifies that projection as the map $\bE_\Phi$ in \eqref{eq:limit}.
\end{proof}

Combining this with Lemma 7.5 in \cite{JWW26}, we conclude that the above quantum Markov semigroup is primitive and bimodule KMS-symmetric. 
Denoting by $D_\rho$ the unique invariant density, we now give a criterion for this quantum Markov semigroup to be KMS-symmetric with respect to $D_\rho$. 

Let $K_1,\ldots,K_{2n}$ denote, in order, the noise operators in \eqref{eq:jumps}. 
They are explicitly determined by $\Gamma_\pm$ and the parameters $t_j$. 
We choose a self-adjoint basis $\{f_j\}_{1\leq j\leq 2n}$ as follows and express the noise operators $\{K_j\}_{1\leq j\leq 2n}$ in this basis:

\begin{equation}\label{eq:majoranas}
    f_j=\frac{v_j+v_j^*}{2},\qquad f_{n+j}=\frac{v_j^*-v_j}{2i},
 \end{equation}
 where $1\leq j\leq n$ and $ \{f_j,f_k\}=\delta_{jk}I$.

\begin{theorem}[KMS Symmetry]\label{thm:kmscriterion}
The following conditions are equivalent:
\begin{enumerate}
 \item $\cL$ is KMS-symmetric with respect to its invariant state with density matrix $D_\rho$;
 \item $\cL_0$ is KMS-symmetric with respect to $D_\rho$: for every $X\in M_d$,
 \begin{equation}\label{eq:kms-cp-condition}
 \cL_0(X)=D_\rho^{-1/2}
     \cL_0^*(D_\rho^{1/2}XD_\rho^{1/2})D_\rho^{-1/2};
 \end{equation}
 \item For any $1\leq j,k\leq n$, 
 \begin{equation}\label{eq:kms-entry-criterion}
 (t_j^2-t_k^2)\gamma^+_{jk}
 =(t_j^2-t_k^2)\gamma^-_{jk}=0.
 \end{equation}
\end{enumerate}
Equivalently,  $\Gamma_\pm$ satisfy
\[
 [\Gamma_\pm,\operatorname{diag}(t_1^2,\ldots,t_n^2)]=0,
\]
that is, they are block diagonal with respect to the partition of the indices
determined by equal values of $|t_j|$.

Moreover, each of the above conditions implies
\begin{equation}\label{eq:y-commutes-sigma}
 [\cL_0(I),D_\rho]=0.
\end{equation}
\end{theorem}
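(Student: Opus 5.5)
Plan: show (1)$\Leftrightarrow$(2) from the structure $\cL=\mathbf y\cdot+\cdot\mathbf y-\cL_0$, and (2)$\Leftrightarrow$(3) by an explicit computation with the fermionic modes. The key tool is a concrete candidate for $D_\rho$: the Gibbs-type density
\[
D_\sigma=\frac{1}{Z}\prod_{j=1}^n e^{-2t_j v_j^*v_j}
\]
(up to sign conventions) built from the commuting number operators $v_j^*v_j$. It satisfies the modular relations $D_\sigma^{1/2}v_jD_\sigma^{-1/2}=e^{\mp t_j}v_j$ and $D_\sigma^{1/2}v_j^*D_\sigma^{-1/2}=e^{\pm t_j}v_j^*$. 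With it, each term of $\cL_0$ in \eqref{eq:L0} transforms by an explicit exponential factor under $X\mapsto D_\sigma^{-1/2}\cL_0^*(D_\sigma^{1/2}XD_\sigma^{1/2})D_\sigma^{-1/2}$.

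\emph{Step 1.} Compute $\cL_0^*$. Taking the Hilbert--Schmidt adjoint swaps $v_k^*xv_j\mapsto v_kxv_j^*$, $v_kxv_j\mapsto v_k^*xv_j^*$, and so on. Then conjugate by $D_\sigma^{\pm1/2}$. Comparing coefficients with \eqref{eq:L0} in the linearly independent family $\{a\,x\,b: a,b\in\{v_j,v_j^*\}\}$ yields conditions of the form $\gamma^\pm_{jk}(e^{\pm(t_j-t_k)}-1)=0$ or $\gamma^\pm_{jk}(e^{t_j+t_k}-e^{-t_j-t_k})\cdots=0$ on the cross terms. The intended outcome is exactly $(t_j^2-t_k^2)\gamma^\pm_{jk}=0$: the $vxv$ terms force $t_j=t_k$ or $\gamma=0$, and the $v^*xv$ terms force $t_j=-t_k$ or $\gamma=0$, in suitable combinations that collapse to $t_j^2=t_k^2$.

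\emph{Step 2: identify $D_\sigma$ with $D_\rho$ when (3) holds.} Under (3), $\Gamma_\pm$ commute with $\diag(t_j^2)$, so $\cL_0$ is KMS-symmetric with respect to a block-adapted density. I would show $\cL^*(D_\sigma)=0$ by checking $\cL_0^*(D_\sigma)=\{\mathbf y,D_\sigma\}$ and invoke uniqueness from Lemma~\ref{lem:primitive}; this gives (3)$\Rightarrow$(2).

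\emph{Step 3: the converse directions.} For (2)$\Rightarrow$(3), KMS symmetry of the CP part forces $\cL_0^*(D_\rho)$ to be determined by $\cL_0(I)$. Via the invariance $\cL^*(D_\rho)=0$, this forces the relations above for $D_\rho$. For (1)$\Leftrightarrow$(2): given (2), apply KMS symmetry to $X=I$ to get $\cL_0^*(D_\rho)=D_\rho^{1/2}\cL_0(I)D_\rho^{1/2}$. Invariance gives $\cL_0^*(D_\rho)=\mathbf yD_\rho+D_\rho\mathbf y$. Hence $2D_\rho^{1/2}\mathbf yD_\rho^{1/2}=\mathbf yD_\rho+D_\rho\mathbf y$, which is $(D_\rho^{1/2}\mathbf y-\mathbf yD_\rho^{1/2})(D_\rho^{1/2}\cdot)$-type and reads $[D_\rho^{1/2},[D_\rho^{1/2},\mathbf y]]=0$. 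This implies $[D_\rho,\mathbf y]=0$ by positivity, which is \eqref{eq:y-commutes-sigma}. Then the map $x\mapsto\mathbf yx+x\mathbf y$ is itself KMS-symmetric, so (2)$\Rightarrow$(1). Conversely, from (1), applying symmetry to $I$ gives the same commutation relation, and subtracting the $\mathbf y$-part yields (2).

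\emph{Main obstacle.} The hard part is (2)$\Rightarrow$(3) without knowing a priori that $D_\rho$ has the product Gibbs form. I would try to show first that $[\mathbf y,D_\rho]=0$ together with KMS symmetry forces the modular group of $D_\rho$ to map $\cV=\spanop\{v_j,v_j^*\}$ into itself; this uses that the Lindblad operators are determined by $\cL$ up to unitary mixing. The modular operator restricted to $\cV$ is then diagonalizable and compatible with the fermionic relations, which pins $D_\rho$ to Gibbs form with some parameters. Matching these parameters to the $t_j$ via the coefficient comparison of Step 1 then gives the conditions in (3).
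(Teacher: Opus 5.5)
Your Step~1 is incorrect, and your plan for (2)$\Rightarrow$(3) depends on it. The correct normalization is $D_\sigma^{1/2}v_jD_\sigma^{-1/2}=e^{2t_j}v_j$ and $D_\sigma^{1/2}v_j^*D_\sigma^{-1/2}=e^{-2t_j}v_j^*$, not $e^{\mp t_j}$. With it, the KMS adjoint sends $\frac12e^{t_j+t_k}(\gamma^+_{jk}+\gamma^-_{jk})v_k^*xv_j$ exactly onto $\frac12e^{-t_j-t_k}(\gamma^+_{jk}+\gamma^-_{jk})v_kxv_j^*$. It also sends $\frac12e^{t_j-t_k}(\gamma^-_{jk}-\gamma^+_{jk})v_kxv_j$ exactly onto $\frac12e^{t_k-t_j}(\gamma^-_{jk}-\gamma^+_{jk})v_k^*xv_j^*$. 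Equivalently, $D_\sigma^{1/2}K_{\ell,\pm}^*D_\sigma^{-1/2}=\mp K_{\ell,\pm}$.

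So $\cL_0$ is KMS-symmetric with respect to $D_\sigma$ for \emph{every} choice of $\Gamma_\pm$, and comparing coefficients gives no constraint at all. With your scaling $e^{\mp t_j}$, the diagonal terms (where $\gamma^\pm_{jj}>0$) would instead force every $t_j=0$. Condition (3) enters only through invariance of $D_\sigma$. Indeed $\cL^*(D_\sigma)=0$ iff $[\mathbf y,D_\sigma]=0$ iff $[H,\mathbf y]=0$, where $H=\frac12\sum_jt_jv_j^*v_j$. The coefficients of $[H,\mathbf y]$ are $(t_j-t_k)\sinh(t_j+t_k)(\gamma^+_{jk}+\gamma^-_{jk})$ and $(t_j+t_k)\sinh(t_j-t_k)(\gamma^-_{jk}-\gamma^+_{jk})$. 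This is enough for your Step~2, i.e.\ (3)$\Rightarrow$(2).

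For (2)$\Rightarrow$(3), however, there is nothing to match parameters against. You must show that (2) forces the modular action of $D_\rho$ to coincide with that of $D_\sigma$; after that, $[\mathbf y,D_\rho]=0$ gives (3). Knowing that the modular group preserves $\cV$ does not give this by itself. The modular action on $\cV$ could a priori be a Bogoliubov transformation mixing the modes, so a \emph{Gibbs form with some parameters} need not be diagonal in the $v_j$.

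The missing ingredient is the paper's rigidity step. Write $D_\rho^{1/2}\bm fD_\rho^{-1/2}=M\bm f$ in the Majorana basis; then $M>0$ and $M^{\mathsf T}=M^{-1}$. Condition (2) becomes $MBM=\overline B$ for the positive coefficient matrix $B$ of $\cL_0$. This equation has a unique positive definite solution, namely $R^{-2}$, which is exactly the modular matrix of $D_\sigma$. Then $[\mathbf y,D_\rho]=0$ translates into $[B-\overline B,R^{-2}]=0$, i.e.\ (3).

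Your argument for (1)$\Rightarrow$(2) also fails as written. Write $\cL^\sharp$ for the KMS adjoint. Testing $\cL=\cL^\sharp$ at $X=I$ is vacuous, because $\cL(I)=0$ and $\cL^\sharp(I)=D_\rho^{-1/2}\cL^*(D_\rho)D_\rho^{-1/2}=0$ by invariance. The identity $2D_\rho^{1/2}\mathbf yD_\rho^{1/2}=\mathbf yD_\rho+D_\rho\mathbf y$ came from the symmetry of $\cL_0$, which is what you are trying to prove here.

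To subtract the $\mathbf y$-part, you need to separate $\mathbf y$ from $\cL_0$ inside $\cL=\cL^\sharp$. The paper does this with the contraction $\sum_{i,j}\cL(E_{ij})E_{ji}=d\,\mathbf y+\Tr(\mathbf y)I$. This annihilates the completely positive part, because all $K_r$ and all $D_\rho^{1/2}K_r^*D_\rho^{-1/2}$ are traceless. Comparing with the same contraction of $\cL^\sharp$ gives $[\mathbf y,D_\rho^{1/2}]=0$, and then $\cL_0^\sharp=\cL_0$.

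Your (2)$\Rightarrow$(1) argument and your derivation of $[\cL_0(I),D_\rho]=0$ from (2) match the paper's and are fine.
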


\begin{proof}
We first prove that conditions (1) and (2) are equivalent.
We use $\sharp$ to denote the adjoint with respect to the
KMS inner product induced by $D_\rho$.
A direct computation gives for all $X\in \cM$,
\[
\cL_0^\sharp(X)
=
\sum_{j=1}^{2n} (D_\rho^{-1/2} K_jD_\rho^{1/2}) X( D_\rho^{1/2}K_j^*D_\rho^{-1/2}),
\]
and
\[
\cL^\sharp(X)
=
\bigl(D_\rho^{-1/2}\mathbf{y}D_\rho^{1/2}\bigr)X
+
X\bigl(D_\rho^{1/2}\mathbf{y}D_\rho^{-1/2}\bigr)
-
\cL_0^\sharp(X).
\]

Suppose first that $\cL=\cL^\sharp$.
Since each $K_j$, and hence each $D_\rho^{1/2}K_j^*D_\rho^{-1/2}$, is traceless,
summing over the standard matrix units yields
\[
\begin{aligned}
&d\mathbf{y}+\operatorname{Tr}(\mathbf{y})I
=
\sum_{i,j=1}^{d}\cL(E_{ij})E_{ji}
=
\sum_{i,j=1}^{d}\cL^\sharp(E_{ij})E_{ji}
=
dD_\rho^{-1/2}\mathbf{y}D_\rho^{1/2}
+\operatorname{Tr}(\mathbf{y})I.
\end{aligned}
\]
Consequently, $[\mathbf{y},D_\rho^{1/2}]=0$.
Substituting this into the expression for $\cL^\sharp$
, we obtain
$\cL_0^\sharp=\cL_0$.

Conversely, suppose that $\cL_0=\cL_0^\sharp$.
Then
\begin{align*}
\cL_0^*(D_\rho)
=
D_\rho^{1/2}\cL_0^\sharp(I)D_\rho^{1/2}
=
2D_\rho^{1/2}\mathbf{y}D_\rho^{1/2}.
\end{align*}
Since $D_\rho$ is invariant,
$0
=\cL^*(D_\rho)
=[D_\rho^{1/2},[D_\rho^{1/2},\mathbf{y}]]$.
Thus
\begin{align*}
0
=
\Tr\bigl(
\mathbf{y}[D_\rho^{1/2},[D_\rho^{1/2},\mathbf{y}]]
\bigr)
=
\Tr\bigl(
[D_\rho^{1/2},\mathbf{y}]^*
[D_\rho^{1/2},\mathbf{y}]
\bigr).
\end{align*}
It follows that $[D_\rho^{1/2},\mathbf{y}]=0$.
Substituting this into the expression for $\cL^\sharp$
, we conclude that
$\cL^\sharp=\cL$.

We next prove that conditions (2) and (3) are equivalent.
If $\cL_0=\cL_0^\sharp$, then their Kraus operators span the same linear space; namely, 
\begin{align*}
 \mathcal V:=\operatorname{span}_{\mathbb C}\{K_r\}
 =\operatorname{span}_{\mathbb C}\{D_\rho^{1/2}K_r^*D_\rho^{-1/2}\}.
 \end{align*}
Since
$\mathcal V=\operatorname{span}_{\mathbb C}\{v_j,v_j^*:1 \le  j \le n\}=\mathcal V^*$,
we have
\begin{equation*}
\begin{aligned}
 D_\rho^{1/2}\mathcal V D_\rho^{-1/2}
 =& D_\rho^{1/2}
   \operatorname{span}_{\mathbb C}\{K_r^*\}
   D_\rho^{-1/2} \\
 =&
   \operatorname{span}_{\mathbb C}\{ D_\rho^{1/2} K_r^* D_\rho^{-1/2}\}
   =\mathcal V\\
   =& \operatorname{span}_{\mathbb C}\{f_j\}_{1 \le j \le 2n}.
\end{aligned}
\end{equation*}
Writing $\bm f=(f_1,\ldots,f_{2n})^{\mathsf T}$ for the column vector
of these basis elements, we have
$ D_\rho^{1/2}\bm fD_\rho^{-1/2}=M\bm f$, where $M=(M_{jk})_{j,k=1}^{2n}$ is a matrix in $M_{2n}(\bC)$.
The modular map is a positive definite self-adjoint operator with respect
to the Hilbert--Schmidt inner product.
Since $\displaystyle \Tr(f_af_b)=\frac d2\delta_{ab}$, it follows that $M>0$.
By noting that
\[
\begin{aligned}
\delta_{jk}I
=D_\rho^{1/2}\{f_j,f_k\}D_\rho^{-1/2}=\left\{
\sum_{p=1}^{2n}M_{jp}f_p,
\sum_{q=1}^{2n}M_{kq}f_q
\right\}
=(MM^{\mathsf T})_{jk}I,
\end{aligned}
\]
 we obtain that   $M^{-1}=M^T=\overline{M}$, where $\overline{M}$ is the conjugate of $M$ by taking the complex conjugation of each entry.
 
 We rewrite the Kraus operators in terms of $f_j$ as follows:
 \begin{equation}
 K_j=\sum_{a=1}^{2n}A_{aj}f_a,\qquad  A=(A_{aj})_{a,j=1}^{2n}\in M_{2n}(\bC).
\end{equation}
Then $\cL_0$ can be expressed in this self-adjoint basis as follows:
\begin{equation*}
\cL_0(X)=\sum_{a,b}\left(\sum_j\overline{A_{aj}}A_{bj}\right)f_aXf_b
       =\sum_{a,b}(AA^*)_{ba}f_aXf_b.
\end{equation*}

Let $B=AA^*$.
We have that
\begin{equation}\label{eq:B-coefficients}
 B=2\begin{pmatrix}
 D_s\Gamma_+D_s+D_c\Gamma_-D_c
   &i(D_s\Gamma_+D_c+D_c\Gamma_-D_s)\\
 -i(D_c\Gamma_+D_s+D_s\Gamma_-D_c)
   &D_c\Gamma_+D_c+D_s\Gamma_-D_s
 \end{pmatrix}>0,
\end{equation}
where
\begin{equation*}
    D_s=\diag(\sinh t_1,\ldots,\sinh t_n),\qquad
 D_c=\diag(\cosh t_1,\ldots,\cosh t_n).
\end{equation*}
    A direct computation shows that
\[
 \cL_0^\sharp(X)=\sum_{j,k}B'_{k,j}f_jXf_k,
\]
where the coefficient matrix $B'=(B'_{j,k})$ satisfies
$B'=M^{\mathsf T}\overline{B}\,\overline{M}
   =M^{-1}\overline{B}M^{-1}$.

Since $\cL_0=\cL_0^\sharp$, we obtain
$MBM=\overline B$.
Note that the equation $XBX=\overline{B}$ has a unique positive definite solution $X_0:=B^{-1/2}\bigl(B^{1/2}\overline B B^{1/2}\bigr)^{1/2}B^{-1/2}$.
Hence $M=X_0$.

Observe that
\begin{equation}\label{eq:y-B}
\begin{aligned}
\mathbf{y}
&=\frac14\operatorname{Tr}(B)I-\frac14\sum_{j,k}(B-\overline B)_{jk}f_jf_k,\\
D_\rho^{\frac12}\mathbf{y}D_\rho^{-\frac12}
&=\frac14\operatorname{Tr}(B)I
  -\frac14\sum_{j,k}
  \bigl(X_0^{\mathsf T}(B-\overline B)X_0\bigr)_{jk}f_jf_k.
\end{aligned}
\end{equation}
Since $[D_\rho,\mathbf{y}]=0$, we have
$\mathbf{y}=D_\rho^{\frac12}\mathbf{y}D_\rho^{-\frac12}$.
Moreover, the operators $\{f_jf_k:j<k\}$ are linearly independent,
and the matrix
$X_0^{\mathsf T}(B-\overline B)X_0-(B-\overline B)$
is skew-symmetric. Hence
\[
 X_0^{\mathsf T}(B-\overline B)X_0=B-\overline B.
\]
Using $X_0^{\mathsf T}=X_0^{-1}$, we conclude that
$[B-\overline B,X_0]=0$, which is 
 \begin{equation}\label{eq:kms-Gamma-criterion}
 \left[B-\overline B,\,
 B^{-1/2}
 \bigl(B^{1/2}\overline B B^{1/2}\bigr)^{1/2}
 B^{-1/2}\right]=0.
 \end{equation}

The identity
\[
X_0^{-1}=X_0^{\mathsf T}=\overline{X_0}
\]
holds independently of the KMS-symmetry assumption.
Indeed, conjugating $X_0BX_0=\overline B$ gives
\[
\overline{X_0}\,\overline B\,\overline{X_0}=B,
\qquad
X_0^{-1}\overline B X_0^{-1}=B.
\]
Both $\overline{X_0}$ and $X_0^{-1}$ are positive definite, so uniqueness
of the positive definite solution implies
$\overline{X_0}=X_0^{-1}$.
Since $X_0$ is Hermitian, we also have $X_0^{\mathsf T}=\overline{X_0}$.

Suppose that \eqref{eq:kms-Gamma-criterion} holds.
By $X_0^{-1}=X_0^{\mathsf{T}}=\overline{X_0}$, we see that $\log X_0$ is a self-adjoint, skew-symmetric matrix.
For this part of the proof only, define
\[
 h=-\frac12\sum_{a,b}(\log X_0)_{ab}f_af_b=h^*,
 \qquad \eta=\frac{e^{2h}}{\Tr(e^{2h})}.
\]
The CAR identity
$[f_af_b,f_c]=\delta_{bc}f_a-\delta_{ac}f_b$
yields $[h,\bm f]=(\log X_0)\bm f$, and hence
$\eta^{1/2}\bm f\eta^{-1/2}=X_0\bm f$.
By $X_0 B X_0=\overline{B}$, we see that $\cL_0$ is KMS-symmetric with respect to $\eta$.
Moreover, \eqref{eq:y-B} and \eqref{eq:kms-Gamma-criterion} imply that
$\eta^{1/2}\mathbf{y}\eta^{-1/2}=\mathbf{y}$.
Thus $\cL$ is also KMS-symmetric with respect to $\eta$.
By uniqueness of the invariant state, $\eta=D_\rho$.
Therefore, $\cL$ is KMS-symmetric with respect to $D_\rho$.

Finally, we interpret the condition \eqref{eq:kms-Gamma-criterion} in entrywise form.
Let
\[
 R=\begin{pmatrix}D_c&iD_s\\-iD_s&D_c\end{pmatrix},
 \qquad
 G=\begin{pmatrix}\Gamma_-&0\\0&\Gamma_+\end{pmatrix}.
\]
Then $B=2RGR$.
Since $\overline B=R^{-2}BR^{-2}$ and $R^{-2}>0$,
the uniqueness of the positive definite solution gives
\begin{equation}\label{eq:Theta-explicit}
 X_0=R^{-2}
 =\begin{pmatrix}
 \operatorname{diag}(\cosh 2t_j)&-i\operatorname{diag}(\sinh 2t_j)\\
 i\operatorname{diag}(\sinh 2t_j)&\operatorname{diag}(\cosh 2t_j)
 \end{pmatrix}.
\end{equation}
Multiplying the commutator in \eqref{eq:kms-Gamma-criterion} on the left and right by $R$, we obtain
\[
\begin{aligned}
 R[X_0,B-\overline B]R
 &=R^{-1}(B-\overline B)R
   -R(B-\overline B)R^{-1}\\
 &=2GR^2-2R^{-2}G-2R^2G+2GR^{-2}\\
 &=2[G,R^2+R^{-2}].
\end{aligned}
\]

Since
\[
 R^2+R^{-2}
 =2\begin{pmatrix}
 D_c^2+D_s^2&0\\
 0&D_c^2+D_s^2
 \end{pmatrix}
 =2\begin{pmatrix}
 \operatorname{diag}(\cosh 2t_j)&0\\
 0&\operatorname{diag}(\cosh 2t_j)
 \end{pmatrix},
\]
the commutation condition can be written entrywise as
\[
 (\cosh 2t_j-\cosh 2t_k)\gamma^\pm_{jk}=0,
 \qquad 1\leq j,k\leq n.
\]
This is equivalent to \eqref{eq:kms-entry-criterion}.
\end{proof}

\begin{remark}
The criterion \eqref{eq:kms-entry-criterion} does not involve the invariant
density matrix $D_\rho$ and requires no explicit computation of matrix square roots.
\end{remark}

\begin{remark}
Theorem~\ref{thm:kmscriterion} shows that, if $|t_1|,\ldots,|t_n|$ are pairwise distinct, 
then the generator $\cL$ in this example is KMS-symmetric with respect to $D_\rho$ if and only if both positive definite matrices $\Gamma_\pm$ are diagonal.
By Lemma~\ref{lem:primitive} and the bimodule KMS-symmetry argument above,
the QMS generated by $\cL$ is primitive and bimodule KMS-symmetric,
with a unique invariant state whose density matrix is $D_\rho$.

However, whenever at least one of $\Gamma_+$ and $\Gamma_-$ is non-diagonal, $\cL$ is not KMS-symmetric with respect to $D_\rho$ and hence is not KMS-symmetric with respect to any state. 
Nevertheless, we can prove that the quantum Markov semigroup generated by $\cL$ satisfies CMLSI whenever $\Gamma_\pm>0$.
\end{remark}

When both $\Gamma_+$ and $\Gamma_-$ are diagonal, the invariant density
matrix $D_\rho$ is explicitly given by
\begin{align}\label{eq:drho}
\frac{\rho_0}{\Tr(\rho_0)} (=:D_{\rho_0}),
 \qquad
 \rho_0=\prod_{j=1}^n
 \left(
 \frac12e^{-2t_j}v_j^*v_j
 +\frac12e^{2t_j}v_jv_j^*
 \right).
\end{align}

\begin{theorem}[GNS symmetry]
\label{thm:gns-general-parameters}
The following conditions are equivalent:
\begin{enumerate}
\item[\textup{(i)}]
\(\mathcal L\) is GNS-symmetric with respect to \(D_\rho\).

\item[\textup{(ii)}]
For every \(1\leq j,k\leq n\), including \(j=k\),
\[
(t_j-t_k)(\gamma^+_{jk}+\gamma^-_{jk})=0,
\qquad
(t_j+t_k)(\gamma^-_{jk}-\gamma^+_{jk})=0.
\]

\end{enumerate}

These conditions admit the following block characterization.
For each \(a\in\{|t_1|,\ldots,|t_n|\}\), set
\[
I_a=\{j:|t_j|=a\}.
\]
Both \(\Gamma_+\) and \(\Gamma_-\) must be block diagonal with
respect to the partition \(\{I_a\}_a\). For every \(a>0\), their
corresponding principal blocks satisfy
\[
\Gamma_-^{(a)}=J_a\Gamma_+^{(a)}J_a,
\qquad
J_a=\operatorname{diag}
\bigl(\operatorname{sgn}(t_j)\bigr)_{j\in I_a}.
\]
If \(I_0\neq\varnothing\), the two blocks on \(I_0\) may be chosen
independently, subject to real symmetry and positive definiteness.
\end{theorem}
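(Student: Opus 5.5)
The plan is to reduce GNS symmetry to KMS symmetry plus a commutation condition with the modular operator. Then the modular action on the Kraus span, which is already computed in the proof of Theorem~\ref{thm:kmscriterion}, does the rest.

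\textbf{Step 1: GNS symmetry reduces to a modular commutation.} I will use the standard fact that GNS symmetry with respect to $D_\rho$ implies KMS symmetry with respect to $D_\rho$. It also implies that $\cL$ commutes with the modular operator $\Delta(X)=D_\rho XD_\rho^{-1}$. Conversely, a KMS-symmetric generator that commutes with $\Delta$ is GNS-symmetric. So under (i), condition (3) of Theorem~\ref{thm:kmscriterion} holds, and therefore $[\mathbf y,D_\rho]=0$ by \eqref{eq:y-commutes-sigma}. Moreover, the proof of that theorem identifies $D_\rho\propto e^{2h}$ and $D_\rho^{1/2}\bm f D_\rho^{-1/2}=X_0\bm f$ with $X_0=R^{-2}$ as in \eqref{eq:Theta-explicit}.

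\textbf{Step 2: modular action on the noise operators.} Diagonalizing the $2\times2$ blocks of $X_0$, which have eigenvalues $e^{\pm2t_j}$, shows that $v_j$ and $v_j^*$ are eigenvectors of $\Delta^{1/2}$. Explicitly,
\[
D_\rho^{1/2}v_jD_\rho^{-1/2}=e^{\mp2t_j}v_j,\qquad D_\rho^{1/2}v_j^*D_\rho^{-1/2}=e^{\pm2t_j}v_j^*,
\]
with the sign fixed by a direct check against \eqref{eq:majoranas}. Since $\mathbf y$ commutes with $D_\rho$, commuting $\cL$ with $\Delta$ is equivalent to commuting $\cL_0$ with $\Delta$. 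Apply $\Delta$ to each term of \eqref{eq:L0}. The four families $x\mapsto v_k^*xv_j$, $v_kxv_j^*$, $v_kxv_j$, $v_k^*xv_j^*$ are linearly independent as maps on $M_d$, because $\{v_j,v_j^*\}$ is a basis of $\cV$ and $x\mapsto axb$ with $a,b$ ranging over bases is a basis of maps. Comparing coefficients term by term, $\Delta$ multiplies:
\begin{itemize}
\item $v_k^*xv_j$ by $e^{\pm4(t_k-t_j)}$, so its coefficient forces $(t_j-t_k)(\gamma^+_{jk}+\gamma^-_{jk})=0$;
\item $v_kxv_j$ by $e^{\mp4(t_j+t_k)}$, so its coefficient forces $(t_j+t_k)(\gamma^-_{jk}-\gamma^+_{jk})=0$.
\end{itemize}
The remaining two families give the same conditions. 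This proves (i)$\Rightarrow$(ii).

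\textbf{Step 3: (ii)$\Rightarrow$(i).} First I check that (ii) implies the KMS criterion \eqref{eq:kms-entry-criterion}. Multiply the first identity by $(t_j+t_k)$ and the second by $(t_j-t_k)$; adding and subtracting the results gives $(t_j^2-t_k^2)\gamma^\pm_{jk}=0$. Hence $\cL$ is KMS-symmetric with respect to $D_\rho$, and the description of $D_\rho$ and of $\Delta$ from Step 1 is available. Running the coefficient computation of Step 2 backwards shows $\Delta\cL_0=\cL_0\Delta$, and $[\mathbf y,D_\rho]=0$ holds. By the Step 1 equivalence, $\cL$ is GNS-symmetric.

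\textbf{Step 4: block characterization.} The condition $(t_j^2-t_k^2)\gamma^\pm_{jk}=0$ gives block diagonality with respect to the partition $\{I_a\}_a$. Inside a block $I_a$ with $a>0$:
\begin{itemize}
\item if $t_j=t_k$, then $t_j+t_k\neq0$, so the second identity gives $\gamma^-_{jk}=\gamma^+_{jk}$;
\item if $t_j=-t_k$, then $t_j-t_k\neq0$, so the first identity gives $\gamma^-_{jk}=-\gamma^+_{jk}$.
\end{itemize}
Together these say $\gamma^-_{jk}=\operatorname{sgn}(t_j)\operatorname{sgn}(t_k)\gamma^+_{jk}$, that is, $\Gamma_-^{(a)}=J_a\Gamma_+^{(a)}J_a$; the case $j=k$ is included. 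On $I_0$ both identities in (ii) are vacuous, so the two blocks there are independent.

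\textbf{Main obstacle.} I expect the main obstacle to be the first half of Step 1, namely justifying that GNS symmetry forces KMS symmetry, $[\mathbf y,D_\rho]=0$, and commutation of $\cL_0$ with $\Delta$. This needs the commutation of GNS-symmetric generators with the modular group, together with uniqueness of GKSL data modulo the traceless normalization used in the proof of Theorem~\ref{thm:kmscriterion}. I would first try to reuse that proof's matrix-unit trace argument with $\sharp$ replaced by the GNS adjoint.
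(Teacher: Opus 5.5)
Your argument is correct. Its skeleton matches the paper's: reduce to the KMS criterion, obtain $[\mathbf y,D_\rho]=0$, find the modular eigen-action on $v_j,v_j^*$, and compare coefficients of the four linearly independent families of maps. Where you differ is in how you obtain the modular data.

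The paper does not reach into the proof of Theorem~\ref{thm:kmscriterion}. Under the KMS criterion it proceeds as follows.
\begin{itemize}
\item It writes down $D_\eta=e^{-4H}/\Tr(e^{-4H})$ with $H=\sum_j t_j\tfrac{1}{2}v_j^*v_j$.
\item It gets $D_\eta^{1/2}v_jD_\eta^{-1/2}=e^{2t_j}v_j$ directly from the CAR relations.
\item It checks that $\cL_0$ is KMS-symmetric for $D_\eta$ and that $[H,\mathbf y]=0$, using the explicit formula \eqref{eq:commutor}. Hence $\cL^*(D_\eta)=0$, and uniqueness of the invariant state gives $D_\rho=D_\eta$ and $[\mathbf y,D_\rho]=0$.
\item It then computes the GNS adjoint $\cL_0^\flat(X)=\cL_0^*(XD_\rho)D_\rho^{-1}$ term by term, rather than invoking ``GNS $=$ KMS $+$ commutation with $\Delta$.'' This yields $u_{jk}(e^{2(t_j-t_k)}-1)=0$ and $z_{jk}(e^{2(t_j+t_k)}-1)=0$, which have the same zero sets as your conditions.
\end{itemize}

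Your route reads the modular action off $X_0=R^{-2}$ in \eqref{eq:Theta-explicit}. It is shorter and makes the structure transparent. However, it depends on intermediate identities inside another proof ($M=X_0$ and $X_0=R^{-2}$) rather than on stated results. The paper's route is self-contained. As a by-product it gives the closed form $D_\rho=e^{-4H}/\Tr(e^{-4H})$ under the KMS criterion, which is used again in \eqref{eq:drho} and Proposition~\ref{prop:candidate-density-distinct-parameters}. When you fix the sign, remember that $X_0^{\mathsf T}$, not $X_0$, acts on coefficient vectors. The outcome is $D_\rho^{1/2}v_jD_\rho^{-1/2}=e^{2t_j}v_j$.

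The step you flag as the main obstacle is in fact routine.
\begin{itemize}
\item GNS symmetry reads $\cL^*(YD_\rho)=\cL(Y)D_\rho$. Since $\cL$ and $\cL^*$ preserve adjoints, taking adjoints also gives $\cL^*(D_\rho Y)=D_\rho\cL(Y)$.
\item Substituting $D_\rho YD_\rho^{-1}$ for $Y$ in the first identity and comparing with the second yields $\cL\circ\Delta=\Delta\circ\cL$.
\item Commutation with $\Delta$ gives commutation with $\Delta^{1/2}$, and hence $\cL^*(D_\rho^{1/2}YD_\rho^{1/2})=D_\rho^{1/2}\cL(Y)D_\rho^{1/2}$, which is KMS symmetry. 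The converse is the same computation reversed.
\end{itemize}
The identity $[\mathbf y,D_\rho]=0$ is already \eqref{eq:y-commutes-sigma}, so no uniqueness-of-GKSL-data argument is needed.
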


\begin{proof}
Write
\[
u_{jk}=\gamma^+_{jk}+\gamma^-_{jk},
\qquad
z_{jk}=\gamma^-_{jk}-\gamma^+_{jk}.
\]
We first identify the invariant density whenever the KMS criterion
from Theorem~\ref{thm:kmscriterion} holds:
\begin{equation}
\label{eq:gns-general-kms-criterion}
(t_j^2-t_k^2)\gamma^\pm_{jk}=0
\qquad(1\leq j,k\leq n).
\end{equation}
Set
\[
H=\sum_{j=1}^nt_j\frac12v_j^*v_j,\qquad
D_\eta=\frac{e^{-4H}}{\operatorname{Tr}(e^{-4H})},
\qquad S=D_\eta^{1/2}.
\]
The CAR relations \eqref{eq:v-CAR} yield
\[
[H,v_j]=-t_jv_j,\qquad
[H,v_j^*]=t_jv_j^*,
\]
and therefore
\[
Sv_jS^{-1}=e^{2t_j}v_j,\qquad
Sv_j^*S^{-1}=e^{-2t_j}v_j^*.
\]
Since the coefficients in the noise operators are real,
\[
SK_{\ell,+}^*S^{-1}=-K_{\ell,+},
\qquad
SK_{\ell,-}^*S^{-1}=K_{\ell,-}.
\]
Thus \(\mathcal L_0\) is KMS-symmetric with respect to \(D_\eta\),
and
\[
\mathcal L_0^*(D_\eta)
=S\mathcal L_0(I)S=2S\mathbf{y} S.
\]
Furthermore, a direct expansion of \( \displaystyle \mathbf{y}=\frac12\mathcal L_0(I)\)
gives
\begin{equation}\label{eq:commutor}
\begin{aligned}
\bigl[H, \mathbf{y}\bigr]
={}&\frac12\sum_{j<k}
u_{jk}(t_j-t_k)\sinh(t_j+t_k)
\bigl(v_j^*v_k-v_k^*v_j\bigr)\\
&+\frac12\sum_{j<k}
z_{jk}(t_j+t_k)\sinh(t_j-t_k)
\bigl(v_j^*v_k^*-v_kv_j\bigr).
\end{aligned}
\end{equation}
If \(|t_j|\neq|t_k|\), condition
\eqref{eq:gns-general-kms-criterion} gives \(u_{jk}=z_{jk}=0\).
If \(|t_j|=|t_k|\), both scalar prefactors in the displayed
commutator vanish. 
Hence \([H,\mathbf{y}]=0\), and consequently
\([S,\mathbf{y}]=[D_\eta,\mathbf{y}]=0\). 
It follows that
\[
\mathcal L^*(D_\eta)
=\mathbf{y} D_\eta+D_\eta \mathbf{y}-2S \mathbf{y} S=0.
\]
Uniqueness of the invariant density gives
\[
D_\rho=D_\eta,\qquad [\mathbf{y},D_\rho]=0.
\]
This establishes the claimed density formula under \eqref{eq:gns-general-kms-criterion}.

We now prove \(\textup{(i)}\Rightarrow\textup{(ii)}\).
GNS symmetry implies KMS symmetry, so Theorem~\ref{thm:kmscriterion} yields
\eqref{eq:gns-general-kms-criterion}.
The preceding calculation therefore applies.
For the GNS inner product
\[
\langle X,Y\rangle_{\mathrm{GNS},D_\rho}
=\operatorname{Tr}(D_\rho X^*Y),
\]
the adjoint of a linear map \(\Psi\) is
\[
\Psi^\flat(X)=\Psi^*(XD_\rho)D_\rho^{-1},
\]
where \(\Psi^*\) denotes the Hilbert--Schmidt adjoint.
Since \([\mathbf{y},D_\rho]=0\), we have
\[
\mathcal L^\flat(X)
=\mathbf{y}X+X\mathbf{y}-\mathcal L_0^\flat(X).
\]
Thus GNS symmetry of \(\mathcal L\) is equivalent to
\(\mathcal L_0^\flat=\mathcal L_0\).

Using
\[
D_\rho v_jD_\rho^{-1}=e^{4t_j}v_j,\qquad
D_\rho v_j^*D_\rho^{-1}=e^{-4t_j}v_j^*,
\]
we obtain
\[
\begin{aligned}
\mathcal L_0^\flat(X)
=\frac12\sum_{j,k=1}^n\Bigl(
&u_{jk}e^{3t_j-t_k}v_k^*Xv_j
+u_{jk}e^{-3t_j+t_k}v_kXv_j^*\\
&+z_{jk}e^{3t_j+t_k}v_kXv_j
+z_{jk}e^{-3t_j-t_k}v_k^*Xv_j^*
\Bigr).
\end{aligned}
\]
The operators \(v_1,\ldots,v_n,v_1^*,\ldots,v_n^*\) are linearly
independent. Their left and right multiplication maps are also
linearly independent, as follows from
\[
\operatorname{vec}(AXB)
=(B^T\otimes A)\operatorname{vec}(X).
\]
Comparing the coefficients in \(\mathcal L_0^\flat\) with those
in \(\mathcal L_0\), we therefore obtain
\[
u_{jk}\bigl(e^{2(t_j-t_k)}-1\bigr)=0,
\qquad
z_{jk}\bigl(e^{2(t_j+t_k)}-1\bigr)=0.
\]
Since all parameters are real, these identities are precisely
\textup{(ii)}.

Conversely, assume \textup{(ii)}.
Whenever \(|t_j|\neq|t_k|\), both \(t_j-t_k\) and \(t_j+t_k\)
are nonzero, so \(u_{jk}=z_{jk}=0\), or equivalently
\(\gamma^+_{jk}=\gamma^-_{jk}=0\).
Thus \eqref{eq:gns-general-kms-criterion} holds, and the initial
calculation gives \(D_\rho=D_\eta\) and \([\mathbf{y},D_\rho]=0\).
Condition \textup{(ii)} makes the coefficients in the displayed
formula for \(\mathcal L_0^\flat\) equal to those in
\(\mathcal L_0\). Hence
\(\mathcal L_0^\flat=\mathcal L_0\) and
\(\mathcal L^\flat=\mathcal L\), proving \textup{(i)}.

For the block characterization, entries connecting different
absolute values vanish. Within \(I_a\), \(a>0\), condition
\textup{(ii)} says
\[
\gamma^-_{jk}
=\operatorname{sgn}(t_j)\operatorname{sgn}(t_k)\gamma^+_{jk},
\]
which is exactly
\(\Gamma_-^{(a)}=J_a\Gamma_+^{(a)}J_a\).
On \(I_0\), both scalar factors in \textup{(ii)} vanish.
\end{proof}

\begin{remark}
\label{thm:gns-distinct-parameters}
Suppose that
\[
t_j\neq0\quad(1\leq j\leq n),
\qquad
|t_j|\neq|t_k|\quad(j\neq k).
\]
By Theorem \ref{thm:gns-general-parameters}, we have that \(\mathcal L\) is GNS-symmetric with respect to \(D_\rho\)
if and only if $\Gamma_+=\Gamma_-$ are diagonal.
By Theorem \ref{thm:kmscriterion}, we have that \(\mathcal L\) is KMS-symmetric with respect to \(D_\rho\)
if and only if $\Gamma_+, \Gamma_-$ are diagonal.
\end{remark}

Recall that
\[
D_{\rho_0}
=\frac1Z\prod_{j=1}^n
\bigl(e^{2t_j}(I-\frac12v_j^*v_j)+e^{-2t_j}\frac12v_j^*v_j\bigr),
\qquad
Z=\prod_{j=1}^n2\cosh(2t_j).
\]

\begin{proposition}[Characterization of the explicit invariant density]
\label{prop:candidate-density-distinct-parameters}
Assume that \(t_1,\ldots,t_n\in\mathbb R\) satisfy
\[
|t_j|\neq|t_k|\qquad\text{whenever }j\neq k.
\]
Then
\[
\begin{aligned}
D_\rho=D_{\rho_0}
&\quad\Longleftrightarrow\quad
\mathcal L^*(D_{\rho_0})=0\\
&\quad\Longleftrightarrow\quad
\Gamma_+\text{ and }\Gamma_-\text{ are both diagonal}.
\end{aligned}
\]
\end{proposition}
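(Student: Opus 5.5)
The plan is to prove the two equivalences separately. The first, $D_\rho=D_{\rho_0}\Leftrightarrow\mathcal L^*(D_{\rho_0})=0$, is immediate from Lemma~\ref{lem:primitive}. $D_{\rho_0}$ is a faithful density matrix, and the invariant density is unique. So $\mathcal L^*(D_{\rho_0})=0$ forces $D_{\rho_0}=D_\rho$, and the converse is trivial.

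For the second equivalence I first rewrite $D_{\rho_0}$ in exponential form. From \eqref{eq:v-CAR} we have $v_j^2=0$ and $\{v_j,v_j^*\}=2I$. Hence $p_j:=\frac12 v_j^*v_j$ is a projection, since $(v_j^*v_j)^2=v_j^*(2I-v_j^*v_j)v_j=2v_j^*v_j$. The projections $p_j$ commute pairwise. Therefore
\[
e^{2t_j}(I-p_j)+e^{-2t_j}p_j=e^{2t_j}e^{-4t_jp_j},
\]
and $D_{\rho_0}$ coincides with $D_\eta=e^{-4H}/\Tr(e^{-4H})$, where $H=\sum_j t_j p_j$ as in the proof of Theorem~\ref{thm:gns-general-parameters}.

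\emph{Diagonal $\Rightarrow$ invariant.} If $\Gamma_\pm$ are diagonal, the KMS criterion \eqref{eq:gns-general-kms-criterion} holds trivially. The first part of the proof of Theorem~\ref{thm:gns-general-parameters} then gives $D_\rho=D_\eta=D_{\rho_0}$.

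\emph{Invariant $\Rightarrow$ diagonal.} Set $S=D_{\rho_0}^{1/2}$. The relations $Sv_jS^{-1}=e^{2t_j}v_j$ and $SK_{\ell,\pm}^*S^{-1}=\mp K_{\ell,\pm}$ use only the CAR relations and the reality of the coefficients in \eqref{eq:jumps}; they do not use any structure of $\Gamma_\pm$. Hence $\mathcal L_0$ is KMS-symmetric with respect to $D_{\rho_0}$ for arbitrary $\Gamma_\pm$, and $\mathcal L_0^*(D_{\rho_0})=2S\mathbf y S$. Consequently
\[
\mathcal L^*(D_{\rho_0})=\mathbf y S^2+S^2\mathbf y-2S\mathbf yS=[S,[S,\mathbf y]].
\]
Suppose this vanishes. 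Since $\mathbf y=\frac12\mathcal L_0(I)$ is self-adjoint, pairing with $\mathbf y$ in the trace gives $\Tr([S,\mathbf y]^*[S,\mathbf y])=0$, exactly as in the proof of Theorem~\ref{thm:kmscriterion}. Thus $[S,\mathbf y]=0$. Because $S$ is a function of the self-adjoint operator $H$ and conversely, this is equivalent to $[H,\mathbf y]=0$.

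Next I invoke the expansion \eqref{eq:commutor}. I need to check, and remark explicitly, that it is a direct computation from the definition of $\mathbf y$ valid for arbitrary $\Gamma_\pm$, and not only under the KMS criterion. The quadratic CAR monomials $v_j^*v_k-v_k^*v_j$ and $v_j^*v_k^*-v_kv_j$, for $j<k$, are linearly independent; they are distinct elements of the Fock/Clifford monomial basis. The vanishing of $[H,\mathbf y]$ therefore gives, for all $j<k$,
\[
u_{jk}(t_j-t_k)\sinh(t_j+t_k)=0,\qquad z_{jk}(t_j+t_k)\sinh(t_j-t_k)=0.
\]
Under $|t_j|\neq|t_k|$, all four scalar factors are nonzero. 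Hence $u_{jk}=z_{jk}=0$, i.e. $\gamma^\pm_{jk}=0$ for $j<k$. By symmetry, $\Gamma_\pm$ are diagonal.

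I expect the main obstacle to be the verification of \eqref{eq:commutor} in full generality. This involves expanding $\mathcal L_0(I)$ using $v_kv_j=-v_jv_k$, $v_k^*v_j+v_jv_k^*=2\delta_{jk}$, and the commutators $[H,v_j^*v_k]=(t_j-t_k)v_j^*v_k$ and $[H,v_j^*v_k^*]=(t_j+t_k)v_j^*v_k^*$. It also requires care to confirm that the diagonal terms contribute only scalars and terms commuting with $H$. I would redo this expansion directly rather than rely on its earlier use under the KMS hypothesis.
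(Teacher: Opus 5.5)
Your proof is correct and follows the paper's argument. The first equivalence comes from uniqueness of the invariant density. For the second, you reduce $\mathcal L^*(D_{\rho_0})=0\Leftrightarrow[S,\mathbf y]=0\Leftrightarrow[H,\mathbf y]=0$ and then compare coefficients in \eqref{eq:commutor} under $|t_j|\neq|t_k|$. You also correctly supply details the paper leaves implicit: the identification $D_{\rho_0}=D_\eta$, the trace-pairing step showing that $[S,[S,\mathbf y]]=0$ forces $[S,\mathbf y]=0$, the validity of \eqref{eq:commutor} for arbitrary $\Gamma_\pm$, and the linear independence of the quadratic CAR monomials.
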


\begin{proof}
The first equivalence follows from uniqueness of the invariant
density. 

We prove the second equivalence directly.
Let $N_j=\tfrac12v_j^*v_j$ for $1\leq j\leq n$, and set
\[
H=\sum_{j=1}^nt_jN_j,\qquad S=D_{\rho_0}^{1/2}.
\]
By the argument in Theorem \ref{thm:gns-general-parameters}, we have that
\[
\mathcal L^*(D_{\rho_0})=0
\quad\Longleftrightarrow\quad [S,y]=0
\quad\Longleftrightarrow\quad [H,y]=0,
\]
where the last equivalence holds because \(S\) is a positive
scalar multiple of \(e^{-2H}\).

Write
\[
u_{jk}=\gamma^+_{jk}+\gamma^-_{jk},\qquad
z_{jk}=\gamma^-_{jk}-\gamma^+_{jk}.
\]
By Equation \eqref{eq:commutor}, we see that
\([H,\mathbf{y}]=0\) is equivalent to
\[
\begin{aligned}
u_{jk}(t_j-t_k)\sinh(t_j+t_k)&=0,\\
z_{jk}(t_j+t_k)\sinh(t_j-t_k)&=0,
\qquad j<k.
\end{aligned}
\]
By the assumption \(|t_j|\neq|t_k|\), both \(t_j-t_k\)
and \(t_j+t_k\) are nonzero. Since the parameters are real,
the corresponding hyperbolic sine factors are also nonzero.
The preceding identities are therefore equivalent to
\[
u_{jk}=z_{jk}=0
\quad\Longleftrightarrow\quad
\gamma^+_{jk}=\gamma^-_{jk}=0,
\qquad j<k.
\]
Since \(\Gamma_\pm\) are symmetric, this is precisely the
condition that both matrices are diagonal.
\end{proof}


\begin{remark}
In the following, we show that the trace distance of $D_\rho$ and $D_{\rho_0}$ depends on the off-diagonal parts of $\Gamma_{\pm}$.
Fix $t_1=\log 2$ and $t_2=\log 3$, and consider the family
\begin{equation}\label{eq:eta-two-mode-family}
 \Gamma_+(r)=\Gamma_-(r)
 =\frac12
 \begin{pmatrix}
 1&r\\
 r&1
 \end{pmatrix},
 \qquad 0\leq r<1.
\end{equation}
Let $D_\rho(r)$ be the associated invariant density.
The generator is covariant under conjugation by
\(e^{i\theta(N_1+N_2)}\) and has real coefficients in the
occupation number basis. By uniqueness of the invariant density,
\(D_\rho(r)\) has the form
\begin{equation}
\label{eq:two-mode-stationary-density-form}
D_\rho(r)=
\begin{pmatrix}
p_{00}&0&0&0\\
0&p_{01}&x&0\\
0&x&p_{10}&0\\
0&0&0&p_{11}
\end{pmatrix},
\qquad p_{00},p_{01},p_{10},p_{11},x\in\mathbb R,
\end{equation}
where
\begin{equation}
\label{eq:two-mode-stationary-density-entries}
\begin{aligned}
p_{00}(r)&=\frac{299843856-299714256r^2}{Q(r)},\\
p_{01}(r)&=\frac{3701776-2051676r^2}{Q(r)},\\
p_{10}(r)&=\frac{18740241-14865516r^2}{Q(r)},\\
p_{11}(r)&=\frac{231361-101761r^2}{Q(r)},\\
x(r)&=-\frac{2525250r}{Q(r)}
     =-\frac{5250r}{481(1394-1369r^2)},
\end{aligned}
\end{equation}
and $h(r)=1394-1369r^2, Q(r)=231361h(r)$.
In particular,
\[
D_\rho(0)=D_{\rho_0}
=\frac1{1394}\operatorname{diag}(1296,16,81,1).
\]
We measure the difference between the two density matrices using the trace distance:
\[
 \delta(r):=\frac12\|D_\rho(r)-D_{\rho_0}\|_1=\frac{
 875r\left(
 \sqrt{(6475r)^2+16728^2}+17797r
 \right)}
 {1341028(1394-1369r^2)},
 \qquad 0\leq r<1.
\]
In particular, $\delta(0)=0$, whereas $\delta(r)>0$ for every $0<r<1$.

As $r\downarrow0$,
\[
 \delta(r)=\frac{2625}{335257}r+O(r^2).
\]
Thus, in this family of examples, a small off-diagonal coupling produces
a first-order deviation in the density matrix.
On the other hand,
\[
 \lim_{r\uparrow1}\delta(r)
 =
 \frac{1295}{2788}
 +\frac{35\sqrt{321751609}}{1341028}
 \approx0.93264664.
\]
For example, when $r=0.9999$, both coefficient matrices remain strictly
positive definite, yet the trace distance between $D_\rho(r)$ and
$D_{\rho_0}$ already exceeds $0.92$.

Figure~\ref{fig:eta-two-mode-distance} shows the trace distance $\displaystyle \delta(r)=\frac12\|D_\rho(r)-D_{\rho_0}\|_1$ as a function of $r$.

\begin{figure}[htbp]
\centering
\begin{tikzpicture}

\pgfmathdeclarefunction{etadistance}{1}{%
  \pgfmathparse{%
    (2625/335257)*#1
    *(sqrt(1+((6475/16728)*#1)^2)+(17797/16728)*#1)
    /((25/1394)+(1369/1394)*(1-#1)*(1+#1))
  }%
}

\begin{groupplot}[
  group style={
    group size=2 by 1,
    horizontal sep=1.1cm
  },
  width=0.44\linewidth,
  height=0.36\linewidth,
  xlabel={$r$},
  grid=major,
  major grid style={gray!20},
  tick label style={font=\small},
  label style={font=\small},
  title style={font=\small},
  scaled ticks=false,
  clip=false
]

\nextgroupplot[
  title={(a) $0\leq r<1$},
  xmin=0, xmax=1,
  ymin=0, ymax=1,
  xtick={0,0.2,0.4,0.6,0.8,1},
  ytick={0,0.2,0.4,0.6,0.8,1},
  ylabel={$\delta(r)=\frac12\|D_\rho(r)-D_{\rho_0}\|_1$}
]

\addplot[
  blue!65!black, thick,
  domain=0:0.99, samples=200
] {etadistance(x)};

\addplot[
  blue!65!black, thick,
  domain=0.99:1, samples=200
] {etadistance(x)};

\addplot[
  gray, dashed,
  domain=0:1, samples=2
] {0.932646640321};

\addplot[
  only marks, mark=*,
  mark size=1.8pt,
  blue!65!black
] coordinates {
  (0,0)
  (0.25,0.002649880611)
  (0.5,0.008045367997)
  (0.9,0.069475763074)
  (0.99,0.439350085775)
  (0.999,0.839313408556)
  (0.9999,0.922399220676)
};

\addplot[
  only marks, mark=*,
  mark size=2.4pt,
  mark options={
    fill=white,
    draw=blue!65!black,
    thick
  }
] coordinates {(1,0.932646640321)};

\node[
  anchor=west,
  font=\scriptsize,
  text=gray!80!black
] at (axis cs:0.03,0.975) {$0.93265\ldots$};

\nextgroupplot[
  title={(b) $0.99\leq r<1$},
  xmin=0.99, xmax=1,
  ymin=0.4, ymax=1,
  xtick={0.99,0.995,1},
  xticklabels={$0.990$,$0.995$,$1.000$},
  ytick={0.4,0.5,0.6,0.7,0.8,0.9,1}
]

\addplot[
  blue!65!black, thick,
  domain=0.99:1, samples=250
] {etadistance(x)};

\addplot[
  gray, dashed,
  domain=0.99:1, samples=2
] {0.932646640321};

\addplot[
  only marks, mark=*,
  mark size=1.8pt,
  blue!65!black
] coordinates {
  (0.99,0.439350085775)
  (0.999,0.839313408556)
  (0.9999,0.922399220676)
};

\addplot[
  only marks, mark=*,
  mark size=2.4pt,
  mark options={
    fill=white,
    draw=blue!65!black,
    thick
  }
] coordinates {(1,0.932646640321)};

\node[anchor=west,font=\scriptsize]
  at (axis cs:0.9903,0.55) {$r=0.99$};

\node[anchor=east,font=\scriptsize]
  at (axis cs:0.9985,0.825) {$r=0.999$};

\node[anchor=east,font=\scriptsize]
  at (axis cs:0.9985,0.895) {$r=0.9999$};

\end{groupplot}
\end{tikzpicture}

\caption{Trace distance between the candidate density $D_{\rho_0}$
and the invariant density $D_\rho(r)$ for
$t_1=\log 2$, $t_2=\log 3$, and
$\displaystyle \Gamma_+(r)=\Gamma_-(r)
=\frac12\begin{pmatrix}1&r\\r&1\end{pmatrix}$.
The left panel shows the full parameter range,
while the right panel provides an enlarged view of
$0.99\leq r<1$.
Filled markers indicate selected parameter values.
The horizontal dashed lines and open markers indicate
the boundary limit
$\delta(r)\to0.93264664\ldots$ as $r\uparrow1$.}
\label{fig:eta-two-mode-distance}
\end{figure}

\end{remark}

\begin{lemma}\label{lem:modular-closure}
For every $z\in\bC$, we have
\begin{equation}\label{eq:modular-closure}
 D_\rho^z\cV D_\rho^{-z}
   =\cV.
\end{equation}
Recall that $\mathcal V=\spanop_\bC\{v_j, v_j^*: 1\leq j \leq n\}$.
\end{lemma}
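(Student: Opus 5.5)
The plan is to show that $D_\rho$ is a faithful fermionic Gaussian (quasi-free) state. Concretely, I want to write $D_\rho=e^{h}/\Tr(e^{h})$ with $h$ quadratic in the Majorana operators $f_1,\ldots,f_{2n}$ from \eqref{eq:majoranas}, that is,
\[
h=\frac12\sum_{a,b=1}^{2n}H_{ab}f_af_b,\qquad H=-H^{\mathsf T}.
\]
Granting this, the lemma follows at once. The CAR identity $[f_af_b,f_c]=\delta_{bc}f_a-\delta_{ac}f_b$ (already used in the proof of Theorem~\ref{thm:kmscriterion}) gives $[h,\bm f]=N\bm f$ for a $2n\times2n$ matrix $N$ determined by $H$. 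So $\operatorname{ad}_h$ maps $\cV=\spanop\{f_a\}$ into itself. Consequently
\[
D_\rho^z\,\bm f\,D_\rho^{-z}=e^{z\operatorname{ad}_h}\bm f=e^{zN}\bm f\in\cV^{2n}.
\]
Since the same holds for $-z$, we get $D_\rho^{z}\cV D_\rho^{-z}=\cV$ with equality, for every $z\in\bC$.

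To prove Gaussianity I would use the quadratic structure of $\cL$. By Lemma~\ref{lem:GKSL}, every noise operator $K_{\ell,\varepsilon}$ is linear in the $f_a$. Hence $\mathbf y=\frac12\sum K^*K$ is a scalar plus a quadratic expression in the $f_a$, and $\cL^*$ is a quadratic fermionic Lindbladian. The first step is to compute the action of $\cL^*$ on second moments: for a density $D$ with covariance $\Gamma_D=\bigl(\Tr(D\,i[f_a,f_b])\bigr)_{ab}$, the quantity $\frac{d}{dt}\Gamma_{\Phi_t^*(D)}$ is an affine expression $X\Gamma+\Gamma X^{\mathsf T}+Y$, closed in second moments. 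Here $X$ and $Y$ are real $2n\times 2n$ matrices built from $B$ in \eqref{eq:B-coefficients}. This is a routine CAR computation.

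The second step is the standard fact that for such generators $\Phi_t^*$ maps Gaussian states to Gaussian states. I would verify this by checking that $\cL^*$ applied to $e^{h}$ equals $\frac{d}{dt}$ of the Gaussian with covariance evolving by the affine ODE above; the cleanest route is the Grassmann/characteristic-function representation. Starting from $I/d$, which is Gaussian with $\Gamma=0$, the states $\Phi_t^*(I/d)$ are therefore Gaussian. By \eqref{eq:limit} they converge to $D_\rho$, so $D_\rho$ is a limit of Gaussian states. In particular its covariance $\Gamma_\infty$ is the stationary solution of the Lyapunov-type equation, and $D_\rho$ is the Gaussian state with covariance $\Gamma_\infty$.

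The main obstacle is turning ``Gaussian'' into ``exponential of a quadratic $h$'', since degenerate Gaussian states are Gaussian but not of this form. Here I would use Lemma~\ref{lem:primitive}: $D_\rho$ is faithful. A Gaussian state with real antisymmetric covariance $\Gamma_\infty$ is faithful exactly when all singular values of $\Gamma_\infty$ are $<1$. In that case the Williamson-type normal form $\Gamma_\infty=O\bigl(\bigoplus_k\lambda_k\begin{pmatrix}0&1\\-1&0\end{pmatrix}\bigr)O^{\mathsf T}$ with $|\lambda_k|<1$ lets us take $h$ with $H$ given by $\operatorname{arctanh}\lambda_k$ in the same normal form. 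This gives $D_\rho=e^{h}/\Tr(e^h)$, and the first paragraph concludes. Should the Gaussian-preservation step prove cumbersome, an alternative is to define $\eta$ as the Gaussian with covariance $\Gamma_\infty$ directly, check $\cL^*(\eta)=0$ through the moment equations (all higher moments of a Gaussian being determined by Wick's rule), and invoke uniqueness of the invariant density.
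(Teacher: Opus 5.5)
Your proposal is correct, and its outer frame is the paper's. Both arguments show that $\log D_\rho$ lies in $\mathcal{Q}_{\mathrm{sa}}$, the self-adjoint part of $\spanop\{I,f_af_b\}$, by following the trajectory $\omega_t=\Phi_t^*(I/d)\to D_\rho$. Both then finish exactly as in your first paragraph, via $D_\rho^zXD_\rho^{-z}=e^{z\operatorname{ad}_{\log D_\rho}}(X)$ and $[\mathcal{Q},\cV]\subseteq\cV$.

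The difference is how quadraticity is carried along the flow. You import fermionic Gaussian theory:
\begin{itemize}
\item closed covariance dynamics;
\item the Bravyi--K\"onig-type theorem that Lindbladians with jump operators linear in the Majoranas preserve Gaussian states;
\item closedness of the Gaussian set under limits;
\item a normal form plus faithfulness, to pass from ``Gaussian'' to ``$e^{h}$ with $h$ quadratic''.
\end{itemize}

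The paper instead works directly with $h_t=\log\omega_t$. This is defined for all $t$ because $\omega_t\geq D_\rho/(d\|D_\rho\|)>0$, and it solves $\dot h=\K_{e^h}^{-1}(-\cL^*(e^h))$. The single observation needed is that this vector field maps $\mathcal{Q}_{\mathrm{sa}}$ into itself. Indeed, $\K_{e^h}(F(h))e^{-h}=-\mathbf{y}-e^h\mathbf{y}e^{-h}+\sum_rK_re^hK_r^*e^{-h}\in\mathcal{Q}$, and $x\mapsto\int_0^1e^{sh}xe^{-sh}\,ds$ is invertible and preserves $\mathcal{Q}$. ODE uniqueness then keeps $h_t$ in $\mathcal{Q}_{\mathrm{sa}}$. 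Continuity of $\log$ on uniformly positive matrices gives $\log D_\rho\in\mathcal{Q}_{\mathrm{sa}}$.

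In effect, the paper proves in a few lines, restricted to faithful states, exactly the Gaussian-preservation step that you outsource and only sketch. It never meets degenerate Gaussians, covariance matrices, Wick's rule or Grassmann calculus. Your route buys more in exchange. It identifies $D_\rho$ as the Gaussian state whose covariance solves the stationary Lyapunov equation $X\Gamma+\Gamma X^{\mathsf T}+Y=0$, which is a practical way to compute $D_\rho$ (for instance in the two-mode example with $\Gamma_\pm(r)$). It also makes the matrix $e^{zN}$ explicit, and hence the orthogonal matrices $O(t)$ used in the proof of Theorem~\ref{thm:CMLSI}.

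One small slip: $h=h^*$ forces $H$ to be $i$ times a real antisymmetric matrix. With your conventions, $H=2i\,O\bigl(\bigoplus_k\operatorname{arctanh}(\lambda_k)J\bigr)O^{\mathsf T}$, where $J$ is the $2\times2$ block in your normal form. This is harmless for the lemma, which only needs $h\in\mathcal{Q}$.
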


\begin{proof}
Let
\begin{align*}
\mathcal Q=&\spanop_\bC\{I,f_af_b:1\leq a,b\leq2n\}, \\
\mathcal Q_{\mathrm{sa}}=&\{h \in \mathcal{Q}:h=h^*\}.
\end{align*}
A direct computation using the CAR relations
$\{f_a,f_b\}=\delta_{ab}I$, stated after \eqref{eq:majoranas}, gives
$[\mathcal Q,\mathcal V]\subseteq\mathcal V
,\quad [\mathcal Q,\mathcal Q]\subseteq\mathcal Q.$

We first show that the logarithm of the invariant density matrix $\log D_\rho$ belongs to $\mathcal Q_{\mathrm{sa}}$.
Define
\[
\omega_t
 :=\Phi_t^*(I/d)
 \geq \Phi_t^*\!\left(\frac{D_\rho}{d\|D_\rho\|}\right)
 =\frac{D_\rho}{d\|D_\rho\|}
 >0,
\]
and set $h_t=\log\omega_t$.
Then $h_t=h_t^*$, and $h_t$ solves the ordinary differential equation
\begin{equation}\label{ode modular action}
\frac{d}{dt} h(t)
 =\K_{e^{h(t)}}^{-1}\bigl(-\cL^*(e^{h(t)})\bigr),
 \qquad h(0)=-(\log d)I.
\end{equation}

We define a function $F$ on $\mathcal Q_{\mathrm{sa}}$ by
\[
 F(h):=\K_{e^h}^{-1}\bigl(-\cL^*(e^h)\bigr),
 \qquad h\in\mathcal Q_{\mathrm{sa}}.
\]
For a fixed $h\in\mathcal Q_{\mathrm{sa}}$, the linear map
\[
 x \longmapsto  \K_{e^h}(x)e^{-h}
 =\int_0^1e^{sh}x e^{-sh}\,ds
\]
is invertible on $M_d(\bC)$.
Moreover, since
\[
 e^hXe^{-h}
 =e^{\operatorname{ad}_h}(X)
 =\sum_{m=0}^{\infty}
 \frac{1}{m!}\operatorname{ad}_h^m(X),
 \qquad \operatorname{ad}_h(X)=[h,X],
\]
this map preserves $\mathcal Q$, and its restriction to $\mathcal Q$
is also invertible.
Note that this map sends $F(h)$ to an element of $\mathcal Q$:
\[
\begin{aligned}
\K_{e^h}(F(h))e^{-h}
 &=-\cL^*(e^h)e^{-h}\\
 &=-\mathbf{y}-e^h\mathbf{y}e^{-h}
   +\sum_{r=1}^{2n}K_r e^hK_r^*e^{-h}
 \in\mathcal Q.
\end{aligned}
\]
It follows that $F(h)\in\mathcal Q$.
Since $\cL^*$ and $\K_{e^h}^{-1}$ preserve self-adjointness,
we also have $F(h)=F(h)^*$, and hence $F(h)\in\mathcal Q_{\mathrm{sa}}$.
This shows that the preceding ODE \eqref{ode modular action} can be solved within the real linear space
$\mathcal Q_{\mathrm{sa}}$.
By the existence and uniqueness theorem for ODEs, we conclude that
$h_t\in\mathcal Q_{\mathrm{sa}}$.

By Lemma~\ref{lem:primitive}, we have
$ h_t=\log\omega_t\longrightarrow\log D_\rho\in\mathcal Q_{\mathrm{sa}}.$
Hence, for every $X\in\mathcal V$ and $z\in\mathbb C$,
\[
\begin{aligned}
 D_\rho^zXD_\rho^{-z}
 =e^{z\operatorname{ad}_{\log D_\rho}}(X)
 =\sum_{m=0}^{\infty}\frac{z^m}{m!}
   \operatorname{ad}_{\log D_\rho}^{\,m}(X)
 \in\mathcal V.
\end{aligned}
\]
Applying the same argument to $-z$ and noting that the two conjugation maps
are inverses of each other, we obtain
\[
 D_\rho^z\mathcal V D_\rho^{-z}=\mathcal V,
 \qquad z\in\mathbb C.
\]
This completes the proof of the lemma.
\end{proof}

\begin{theorem}\label{thm:CMLSI}
The semigroup $\{\Phi_t\}_{t\geq 0}$ with generator $\cL$ defined in \eqref{eq:L-def} satisfies CMLSI with respect to its invariant density matrix $D_\rho$.
Precisely, there exists $\alpha>0$ such that, for every $k\geq1$
and every joint density matrix $D$ on $\bC^d\otimes\bC^k$,
\begin{equation}\label{eq:CMLSI-contraction}
 H\bigl((\Phi_t^*\otimes\Id_k)(D)\Vert D_\rho\otimes D_R\bigr)
 \leq e^{-2\alpha t}H(D\Vert D_\rho\otimes D_R),
 \qquad t\geq0,
\end{equation}
where $D_R=\Tr_{\bC^d}(D)$.
Equivalently, for every positive definite $D$, we have that
\begin{equation}\label{eq:CMLSI-EP}
\mathcal I_L^{(k)}(D):=
 \Tr\left((\cL^*\otimes\Id_k)(D)
       (\log D-\log(D_\rho\otimes D_R))\right)
 \geq2\alpha H(D\Vert D_\rho\otimes D_R).
\end{equation}
Neither conclusion requires $\cL$ to be KMS-symmetric with respect to $D_\rho$.
\end{theorem}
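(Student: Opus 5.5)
We will adapt the local-to-global scheme of the proof of Theorem~\ref{thm:mlsiprim} to the amplified generator $\cL\otimes\Id_k$. The proof splits into a local estimate near $D_\rho\otimes D_R$ and a global step by convexity. Throughout, the reference state is $\eta=\rho\otimes\sigma$ with $D_\sigma=D_R$, and $X=\K_{D_\eta}^{-1}(D-D_\eta)$ satisfies $(\rho\otimes\Id_k)(X)=0$, by the marginal identity used in the proof of Theorem~\ref{thm:mlsigraph}.

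\textbf{Global step.} This step carries over verbatim. It uses joint convexity of the Fisher information, Lemma~2.2 of \cite{GaoRou22}, and the domination $D\leq d\,\|D_\rho^{-1}\|\,(D_\rho\otimes D_R)$, which holds uniformly in $k$ and $D_R$. It therefore suffices to establish three uniform quantities, all with respect to the BKM inner product of $\eta$:
\begin{itemize}
\item a spectral-gap constant
\[
q=\inf Q(X)>0 \quad\text{over self-adjoint } X \text{ with } (\rho\otimes\Id_k)(X)=0;
\]
\item a norm bound $p=\sup\|\cL\otimes\Id_k\|_{\BKM,\eta}<\infty$;
\item uniformity of both $q$ and $p$ in $k$ and $\sigma$.
\end{itemize}

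\textbf{Lower bound on the Dirichlet form.} The key tool for $q$ is the carr\'e du champ identity \eqref{eq:carre}. We rewrite
\[
\Re\langle X,(\cL\otimes\Id)(X)\rangle_{\BKM,\eta}=\int_0^1\Re\Tr\bigl(D_\eta^sX^*D_\eta^{1-s}(\cL\otimes\Id)(X)\bigr)\,ds .
\]
By Lemma~\ref{lem:modular-closure}, the noise space $\cV$ is invariant under every modular map $D_\rho^{z}\,\cdot\,D_\rho^{-z}$. This invariance should let us write the $s$-twisted form as a sum of squares of commutators with modular-twisted noise operators,
\[
\sum_r\Tr\bigl(D_\eta^{s}[\tilde K_r^{(s)},X]^*D_\eta^{1-s}[\tilde K_r^{(s)},X]\bigr),\qquad \tilde K_r^{(s)}\in\cV\otimes I,
\]
up to an antisymmetric remainder whose real part vanishes. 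This is the same mechanism by which $2\Re\Tr(D_\rho X^*\cL(X))$ reduces to a sum of commutator squares at $s=1$, as in \eqref{eq:noise-commutant-gns-energy}.

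If we cannot reduce the full $s$-family to squares, the fallback is to use $\cL=\mathbf y\cdot+\cdot\mathbf y-\cL_0$, expand in the Majorana basis $f_a$ (only quadratic terms appear), and compare with a GNS-symmetric reference generator. The natural reference is the Ornstein--Uhlenbeck-type generator $\cL_{\mathrm{ref}}(X)=\sum_a(\ldots)[f_a,[f_a,X]]$ twisted by $D_\rho$. For this reference, CMLSI is known from \cite{GaoRou22}, and the comparison constants depend only on $B$ in \eqref{eq:B-coefficients}, which is positive definite because $\Gamma_\pm>0$.

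\textbf{Positivity and uniformity.} Since the $\tilde K_r^{(s)}$ span $\cV$ and $\cV$ generates $M_d$, the quadratic form vanishes only on $I\otimes M_k$, which is exactly the kernel of $\rho\otimes\Id_k$. Uniformity in the ancilla should follow from the decomposition of $X$ along the eigenbasis of $D_\sigma$: blocks $X_{ab}\in M_d$ with weights $(s\text{-integrals of } t_a^{u}t_b^{1-u})$, as in Lemma~\ref{lem:graph-complete-bkm}. Each block then reduces to a $d$-dimensional problem in which the ancilla eigenvalue ratio $r=t_a/t_b$ enters only through a weight $r^{1-s}$ on $[0,1]$. Compactness in $s$, together with the ratio of $s$-twisted forms being bounded uniformly over $s\in[0,1]$ (both sides being controlled by commutator squares), gives $q>0$ independent of $r$, $k$ and $\sigma$. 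The bound $p\leq C\|\cL\|$ follows similarly, blockwise.

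\textbf{Main obstacle.} The main obstacle is this uniformity in $r\to0$ or $r\to\infty$. Theorem~\ref{lem:cmlsibd} shows this is precisely where CMLSI can fail, since as $r\downarrow0$ the weight concentrates at the GNS endpoint $s=1$. We therefore must verify that no nonscalar $X$ commutes with all $\tilde K_r^{(1)}$. This holds here because $\cV=\cV^*$ is modular invariant, so the twisted noise span is all of $\cV$, which contains both $v_j$ and $v_j^*$ and hence has trivial commutant. This rules out the single-noise obstruction of Corollary~\ref{cor:single-noise-no-cmlsi}.
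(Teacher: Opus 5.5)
Your primary route runs the local-to-global scheme of Theorem~\ref{thm:mlsiprim} for $\cL\otimes\Id_k$ with reference $D_\rho\otimes D_R$. This differs from the paper's proof, and its outer layers are sound without any symmetry. The global step (Spohn convexity, \cite[Lemma~2.2]{GaoRou22}, $D\leq d\|D_\rho^{-1}\|\,D_\rho\otimes D_R$) carries over. So does the uniform bound on $p$: blockwise, $p$ is at most the supremum over $s\in[0,1]$ of the operator norm of $\cL$ for $\|\cdot\|_{s,\rho}$.

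The gap is the Dirichlet-form lower bound. You assert that for every $s$ the twisted form is a sum of commutator squares ``by the same mechanism'' as at $s=1$. That mechanism does not extend. The identity \eqref{eq:noise-commutant-gns-energy} holds at $s=1$ (and, applied to $X^*$, at $s=0$) only because $\Tr(D_\rho\,\cdot\,)$ annihilates $\cL(X^*X)$ and $\cL(XX^*)$. For interior $s$ and a non-KMS-symmetric generator there is no comparable cancellation. Modular invariance of $\cV$ only says twisted Kraus operators stay in $\cV$; it does not produce the claimed identity, let alone with a remainder of vanishing real part. Your positivity-and-uniformity paragraph rests entirely on this claim, so as written $q>0$ is not established. (Also, $I\otimes M_k$ is a complement of $\ker(\rho\otimes\Id_k)$, not the kernel.)

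Your fallback points toward the paper's proof but omits what makes it work. First, the reference must be GNS-symmetric with respect to the actual $D_\rho$; the paper uses $F_a=D_\rho^{1/4}f_aD_\rho^{-1/4}$ and Lemma~\ref{lem:modular-closure}. Second, the comparison must be a complete-positivity domination: $\cL_0-c\,\Psi_{\mathrm{db}}$ completely positive for some $c>0$. Then $\cT=\cL-c\,\cL_{\mathrm{db}}$ is itself a Lindbladian with $\cT^*(D_\rho)=0$, and $\cI^{(k)}_{\cT}\geq0$ by data processing. A Dirichlet-form comparison alone does not transfer entropy production.

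Your endpoint analysis is the right idea, and it already suffices. Put $g(s,Y)=\Re\Tr(D_\rho^sY^*D_\rho^{1-s}\cL(Y))$ for $Y\in M_d(\bC)$. Lemma~\ref{lem:tech1} needs only equilibrium and gives $g(s,Y)\geq0$ for all $s$ and all $Y$, self-adjoint or not. The identity \eqref{eq:carre} gives
\[
g(1,Y)=\frac12\sum_r\Tr\bigl(D_\rho[K_r,Y]^*[K_r,Y]\bigr),
\qquad
g(0,Y)=\frac12\sum_r\Tr\bigl(D_\rho[K_r,Y^*]^*[K_r,Y^*]\bigr).
\]
Both are strictly positive when $Y\neq0$ and $\Tr(D_\rho Y)=0$, since $\{K_r\}'=\cV'=\bC I$ by Lemma~\ref{lem:GKSL}.

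Every block $Y$ of $X$ in the eigenbasis of $D_R$ satisfies $\Tr(D_\rho Y)=0$. A block with ancilla eigenvalue ratio $r\in(0,\infty)$ contributes the quotient $\int_0^1r^sg(s,Y)\,ds\big/\int_0^1r^s\|Y\|_{s,\rho}^2\,ds$. This is positive, since $g(\cdot,Y)\geq0$ is continuous and positive near $s=1$. It tends to the endpoint quotients as $r\to0$ or $r\to\infty$. Extended by those values, it is a continuous positive function on the compact set $[0,\infty]\times\{Y:\Tr(D_\rho Y)=0,\ \|Y\|=1\}$. Hence it is bounded below by some $q>0$ independent of $k$ and $D_R$. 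The scheme of Theorem~\ref{thm:mlsiprim} then gives $\alpha\geq q^2/(14p(\kappa-1))$ with $\kappa=d\|D_\rho^{-1}\|$; no twisted noise operators or Lemma~\ref{lem:modular-closure} are needed.

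Compared with the paper, which is short but imports the Gao--Rouz\'e theorem with an inexplicit constant, this repaired route is self-contained. It yields a constant in terms of $q$, $p$, $\kappa$. It uses only a faithful invariant state and $\{K_r\}'=\bC I$, so it also gives the converse of the obstruction in Theorem~\ref{lem:cmlsibd}.
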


\begin{proof}
We express the quantum Markov semigroup generator $\cL$ as a positive linear combination of two quantum Markov semigroup generators, one of which is GNS-symmetric with respect to $D_\rho$. 
We then apply the CMLSI result for GNS-symmetric quantum Markov semigroups to deduce CMLSI for the original quantum Markov semigroup. 

Let
\begin{equation}\label{eq:quarter-reference}
\begin{aligned}
 & F_j=D_\rho^{1/4}f_jD_\rho^{-1/4},\qquad
 \Psi_{\mathrm{db}}(X)=\sum_{a=1}^{2n}F_a^*XF_a,\\
 & \cL_\mathrm{db}(X)=\frac12\{\Psi_\mathrm{db}(I),X\}-\Psi_\mathrm{db}(X).
 \end{aligned}
\end{equation}
We first show that $\cL_{\mathrm{db}}$ is GNS-symmetric with respect to $D_\rho$.  Since
 $D_\rho^{1/2}F_j^*D_\rho^{-1/2}=F_j$,
the maps $\Psi_\mathrm {db}^\sharp$ and $\Psi_\mathrm{db}$ have the same Kraus operators, where $\Psi_\mathrm {db}^\sharp$ is the KMS adjoint of $\Psi_\mathrm{db}$.
Hence $\Psi_\mathrm{db}$ is KMS-symmetric with respect to $D_\rho$.  
On the other hand, by Lemma~\ref{lem:modular-closure}, for every $t\in\bR$ there exists a real orthogonal matrix $O(t)$ such that 
\begin{align*}
 D_\rho^{it}f_jD_\rho^{-it}=\sum_b O_{jb}(t)f_b.
 \end{align*}
The coefficients are real by self-adjointness of $D_\rho^{it}f_jD_\rho^{-it}$.
Orthogonality follows from the CAR relations
$\{f_j,f_k\}=\delta_{jk}I$, stated after \eqref{eq:majoranas}.
The same transformation formula holds for $F_j$.
Consequently, $\Psi_\mathrm{db}$ commutes with the modular automorphisms $X\mapsto D_\rho^{it}XD_\rho^{-it}$.
In particular, $[\Psi_\mathrm{db}(I),D_\rho]=0$.
Thus $\cL_\mathrm{db}$ is KMS-symmetric and commutes with the modular action, which implies that it is GNS-symmetric with respect to $D_\rho$.

Together with $\cL_\mathrm{db}(I)=0$, this yields
$\cL_\mathrm{db}^*(D_\rho)=0$.
Lemma~\ref{lem:modular-closure} also shows that $\{F_j\}$ and $\{K_r\}$
span the same noise space $\cV$. Hence there exist coefficients $d_{jr}$ such that $\displaystyle  F_j=\sum_r d_{jr}K_r$.
Since the space $\cV$ generates $M_d(\bC)$, the argument in Lemma~\ref{lem:primitive} also applies to $\cL_\mathrm{db}$.
In particular, $\cL_\mathrm{db}$ is primitive.

By the Cauchy--Schwarz inequality, we have for any $\xi\in \bC^d\otimes \bC^k$ and $0 \leq Y\in M_d(\bC)\otimes M_k(\bC)$,
\[
\begin{aligned}
&\left\langle\xi,
  (\Psi_{\mathrm{db}}\otimes\mathrm{id}_k)(Y)\xi
  \right\rangle
 =\sum_j\|Y^{1/2}(F_j\otimes I_k)\xi\|^2\\
&\leq\sum_j
  \left(\sum_r|d_{jr}|^2\right)
  \left(\sum_r\|Y^{1/2}(K_r\otimes I_k)\xi\|^2\right)\\
&=\left(\sum_{j,r}|d_{jr}|^2\right)
  \left\langle\xi,
  (\cL_0\otimes\mathrm{id}_k)(Y)\xi
  \right\rangle.
\end{aligned}
\]
Setting $\displaystyle q=\sum_{j,r}|d_{jr}|^2$, we obtain $\displaystyle \cL_0-\frac1q\Psi_{\mathrm{db}}$ is completely positive.
Define $\displaystyle \cT:=\cL-\frac1q\cL_{\mathrm{db}}$,
then 
\[
 \cT(X)=\frac12\left\{
       \left(\cL_0-\frac1q\Psi_{\mathrm{db}}\right)(I),X
       \right\}
       -\left(\cL_0-\frac1q\Psi_{\mathrm{db}}\right)(X),
\]
so $\cT(I)=0, \; \cT^*(D_\rho)=0$ and $\cT$ is also a quantum Markov semigroup generator.

By Gao and Rouz\'e's CMLSI theorem for finite-dimensional GNS-symmetric quantum Markov semigroups
\cite[Theorem~3.3]{GaoRou22}, there exists $\alpha_\mathrm{db}>0$,
independent of $k$, such that
\begin{equation}\label{eq:reference-CMLSI}
 \cI_{\cL_\mathrm{db}}^{(k)}(D)
 \geq 2\alpha_\mathrm{db}H(D\Vert\ D_\rho\otimes D_R).
\end{equation}
Moreover, the data processing inequality gives
$\cI_\cT^{(k)}(D)\geq0$. Therefore,
\begin{equation}\label{eq:EP-comparison}
 \cI_\cL^{(k)}(D)
 =\frac1q\cI_{\cL_\mathrm{db}}^{(k)}(D)+\cI_\cT^{(k)}(D)
 \geq \frac{2}{q}\alpha_\mathrm{db}H(D\Vert D_\rho\otimes D_R).
\end{equation}
This proves \eqref{eq:CMLSI-EP}.

\end{proof}

\bibliographystyle{abbrv}
\bibliography{kms}

\end{document}